\documentclass{amsart}
\theoremstyle{plain}
\newtheorem{theorem}{Theorem}[section]
\newtheorem{lemma}[theorem]{Lemma}

\theoremstyle{definition}
\newtheorem{definition}[theorem]{Definition}

\newtheorem{corollary}[theorem]{Corollary}
\newtheorem{proposition}[theorem]{Proposition}

\usepackage{amsmath,amssymb}
\usepackage{hyperref}
\usepackage{tikz}
\usepackage{mathtools}
\usepackage{tikz-cd}
\usepackage[english]{babel}
\usepackage{mathrsfs,enumerate}

\newtheorem{case}{Case}

\newcommand{\KK}{\mathbb{K}} 
\newcommand{\RR}{\mathbb{R}} 
\newcommand{\CC}{\mathbb{C}} 
\newcommand{\PP}{\mathbb{P}} 



\DeclareMathOperator{\rank}{rank}
\DeclareMathOperator{\Ima}{Im}
\DeclareMathOperator{\Seg}{Seg}
\DeclareMathOperator{\spa}{span}
\DeclareMathOperator{\Var}{Var}
\DeclareMathOperator{\dist}{dist}
\DeclareMathOperator{\cl}{cl}

\DeclarePairedDelimiter\floor{\lfloor}{\rfloor}
\DeclarePairedDelimiter\norm{\lVert}{\rVert}

\numberwithin{equation}{section}


\title{Semialgebraic Geometry of Nonnegative Tensor Rank}
\thanks{YQ and PC are supported by the ERC under the European Community's Seventh Framework Program FP7/2007-2013 Grant 320594. 
LHL is supported by AFOSR FA9550-13-1-0133, DARPA D15AP00109, NSF IIS 1546413, DMS 1209136, and DMS 1057064.}

\author{Yang~Qi}
\author{Pierre~Comon}
\address{CNRS, Gipsa-Lab,  Universit\'{e} Grenoble Alpes, F-38000 Grenoble, France}
\email{yang.qi@gipsa-lab.grenoble-inp.fr, pierre.comon@gipsa-lab.grenoble-inp.fr}
\author{Lek-Heng~Lim}
\address{Computational and Applied Mathematics Initiative, Department of Statistics, University of Chicago, 5734 South University Avenue, Chicago, IL 60637, USA}
\email{lekheng@galton.uchicago.edu}

\begin{document}

\begin{abstract}
We study the semialgebraic structure of $D_r$, the set of nonnegative tensors of nonnegative rank not more than $r$, and use the results to infer various properties of nonnegative tensor rank. We  determine all nonnegative typical ranks for cubical nonnegative tensors and show that the direct sum conjecture is true for nonnegative tensor rank. We show that nonnegative, real, and complex ranks are all equal for a general nonnegative tensor of nonnegative rank strictly less than the complex generic rank. In addition, such nonnegative tensors always have unique nonnegative rank-$r$ decompositions if the real tensor space is $r$-identifiable. We determine conditions under which a best nonnegative rank-$r$ approximation has a unique nonnegative rank-$r$ decomposition: for $r \le 3$, this is always the case; for general $r$, this is the case when the best nonnegative rank-$r$ approximation does not lie on the boundary of $D_r$.  Many of our general identifiability results also apply to real tensors and real symmetric tensors. 
\end{abstract}

\keywords{nonnegative tensors, nonnegative tensor rank, nonnegative typical ranks, real tensor rank, symmetric tensor rank, best nonnegative rank-$r$ approximations, semialgebraic geometry, uniqueness and identifiability}

\subjclass[2010]{14P10, 15A69, 41A50, 41A52}

\maketitle

\section{Introduction}

In many applications, notably algebraic statistics \cite{FriedlandGross12:ja, Friedland13:laa, AllmanRhodes08:aam, AllmanRhodes03:mb, PachterSturmfels05, DrtonSturmfelsSullivant09, AllRhoSturmZw15:laa}, one frequently needs to find (i) the nonnegative rank, (ii) a nonnegative rank-$r$ decomposition, or (iii) a best nonnegative rank-$r$ approximation, of a nonnegative third order tensor. Such problems also arise for instance in chemometrics \cite{LimC09:jchemo} and hyperspectral imaging \cite{VegaCCCC16:tgrs}, where quantities like concentration and intensity can only take on nonnegative values. This article addresses questions pertaining to these three problems using tools from semialgebraic geometry.

Questions regarding nonnegative decompositions of a nonnegative tensor are often regarded as being more difficult than the corresponding questions over the complex numbers. One reason is that the tools of classical algebraic geometry are often at one's disposal in the latter case but not the former. In this article we study nonnegative tensors under the light of semialgebraic geometry. The first main result of our article (cf.\ Theorem~\ref{thm:riden1}) is that for a general nonnegative tensor with nonnegative rank strictly less than the complex generic rank, its rank over complex numbers, real numbers, and nonnegative real numbers, are all equal. Furthermore, for such a nonnegative tensor, its nonnegative rank-$r$ decomposition is unique if the real tensor space is $r$-identifiable. We determine the nonnegative typical ranks in Propositions~\ref{prop:typical1} and \ref{prop:typical2} and show in Lemma~\ref{lem:directsum} that the nonnegative direct sum conjecture is true, i.e., the nonnegative rank of the direct sum of two nonnegative tensors equals the sum of the respective nonnegative ranks. In our earlier work \cite{QComonLim14:arxiv}, we showed that a general nonnegative tensor has a unique best nonnegative rank-$r$ \emph{approximation}. But it  remains to be seen whether this approximation itself has a unique nonnegative rank-$r$ \emph{decomposition}; we show that this is the case for $r \le 3$ in Theorem~\ref{thm:uni23}, and, for general $r$, we show in  Corollary~\ref{cor:uniqueness} that uniqueness holds for an open subset of nonnegative tensors under some conditions on the tensor space.

The paper is organized as follows. Section~\ref{sec:semialgebraic} lists some preliminary facts in semialgebraic geometry. The definition of $X$-rank and its basic properties are introduced in Section~\ref{sec:r-ranks}. Lemma~\ref{le:interval} is necessary to determine nonnegative typical ranks in Propositions~\ref{prop:typical1} and \ref{prop:typical2}. Our main contributions are then presented in Sections~\ref{sec:nonnegative}, \ref{sec:typmax}, \ref{sec:generic}.   Although we focus on nonnegative tensors, some of our techniques apply almost verbatim to real tensors and real symmetric tensors, and thus we will also derive a few identifiability results for such tensors. 

We begin with a short list of standard definitions. 
Let $V_1, \dots, V_d$ be vector spaces over a field $\KK$, and denote the dual of $V_i$ by $V_i^*$. The tensor space $V_1^* \otimes \cdots \otimes V_d^*$ is the space of multilinear $\KK$-valued functions on $V_1 \times \cdots \times V_d$. Its elements are called order-$d$ tensors or $d$-tensors or just tensors if the order is implicit. We will write $\KK^{n_1 \times \dots \times n_d} = \KK^{n_1} \otimes \dots \otimes \KK^{n_d}$ and regard the elements as $d$-dimensional hypermatrices.

A nonzero tensor in $V_1 \otimes \cdots \otimes V_d$ is said to have rank-one if it is of the form $v_1 \otimes \cdots \otimes v_d$, where $v_i \in V_i$ and $v_1 \otimes \cdots \otimes v_d$ is defined by
\[
v_1 \otimes \cdots \otimes v_d(u_1, \dots, u_d) = v_1(u_1) \cdots v_d(u_d)
\]
for all $u_i \in V_i^*$. The rank of a nonzero tensor $T$, denoted by $\rank(T)$, is the minimum number $r$ such that $T$ is a sum of $r$ rank-one tensors. In addition, $\rank(T) = 0$ iff $T =0$. An expression of $T$ as a sum of $r = \rank(T)$ rank-one tensors is called a rank-$r$ decomposition\footnote{An expression of $T$ as a sum of $s$ rank-one tensors where $s$ is not necessarily $\rank(T)$ will just be called an $s$-term decomposition.}.
A rank-$r$ decomposition
\begin{equation}\label{eq:rrd} 
T = \sum_{i=1}^r T_i,\qquad T_i=u^{(1)}_i \otimes \cdots \otimes u^{(d)}_i,
\end{equation}
is said to be (essentially) unique if the unordered set $\{T_i :i=1,\dots, r\}$ is unique \cite{Como14:spmag}, i.e., each $u^{(k)}_i$ is unique up to permutation and scaling \cite{Krus77:laa,Hack12,Land12,DomanovDeLa131:simax,HLA}.
The tensor space $V_1 \otimes \cdots \otimes V_d$ is said to be \emph{$r$-identifiable} if a general rank-$r$ tensor has a unique rank-$r$ decomposition \cite{ChiantiniOtt12:simax}.  There has been intense research on tensor ranks and uniqueness of rank-$r$ decompositions. See \cite{Como14:spmag} for a review.

 We note that the names \textsc{parafac}, \textsc{candecomp}, canonical polyadic, or \textsc{cp} decomposition have often been used in the literature for \eqref{eq:rrd}. However \eqref{eq:rrd} and the corresponding notion of rank were originally proposed by F.~L.~Hitchcock \cite{Hitch}, and  it was followed by many subsequent works in mathematics long before the psychometricians \cite{CC, Harsh} coined the names \textsc{candecomp} and \textsc{parafac}. Hitchcock had used `polyadic' in a different sense and the terms \textsc{cp}-rank and \textsc{cp} decompositions are better known as something entirely different \cite{CP1,CP2,CP3,CP4}. As such we think it is fair to use a neutral and unambiguous term like `rank-$r$ decomposition' to describe \eqref{eq:rrd}.

In this article, the field $\KK$ will be either the field of real numbers $\RR$ or complex numbers $\CC$. We will also extend the above to a semiring, denoted by $\mathcal{R}$. Of particular interest to us is the semiring of nonnegative real numbers $\RR_+\coloneqq [0,\infty)$. It is possible that $\mathcal{R} = \RR$ or $\CC$, i.e., a result stated for semiring would also apply to a field unless stated otherwise. For convenience of notations, all our results are stated for $3$-tensors, i.e., $d=3$, although most of them can be generalized to tensors of arbitrary order without difficulties.

\section{Semialgebraic geometry}\label{sec:semialgebraic}

In this section we briefly review some well-known facts in semialgebraic geometry, providing in particular a summary of the relevant portions of \cite{BCR98, Coste02:raag, Milnor68, Durfee83:tams, Coste05} for our later use.

A \emph{semialgebraic} subset of $\RR^n$ is the union of finitely many subsets of the form
\[
\{ x \in \RR^n : P(x) = 0,\; Q_1(x) > 0, \dots, Q_m(x) > 0\},
\]
where $P, Q_1, \dots, Q_m \in \RR[X_1, \dots, X_n]$,  are polynomials in $n$ variables with real coefficients. Let $S$ and $T$ be semialgebraic sets. A map $f \colon S \to T$ is called \emph{semialgebraic} if its graph $G(f) \coloneqq \{ (s, t) \in S \times T : f(s) = t \}$ is semialgebraic. A semialgebraic set is called \emph{nonsingular} if it is an open subset of the set of nonsingular points of some algebraic set. A \emph{Nash manifold} is a  semialgebraic analytic submanifold of  $\RR^n$ and a \emph{Nash mapping} between Nash manifolds is an analytic mapping with a semialgebraic graph.

A  point $p$ in a semialgebraic set $S$ is said to be \emph{general} with respect to some property $\mathscr{P}$ if the points in $S$ that do not have the property $\mathscr{P}$ are all contained in a semialgebraic subset $C$ of $S$ with $\dim C < \dim S$ and $p \notin C$. To aid readers unacquainted with the notion, we give familiar measure theoretic and topological interpretations of a general point but note that these cannot replace its formal definition. Given the Lebesgue measure $\mu$ on $S$, if a point $p \in S$ is general with respect to a property $\mathscr{P}$, then (i) $C \coloneqq \{q\in S \colon q \;\text{does not satisfy}\; \mathscr{P}\}$ is a measure-zero subset of $S$; and (ii) $p \notin C$. Hence in the sense of measure theory, the statement that a general point satisfies $\mathscr{P}$ is equivalent to the statement that almost every point satisfies $\mathscr{P}$. On the other hand, in the sense of topology, the statement that a general point satisfies $\mathscr{P}$ has a stronger connotation --- it implies that the subset $C$ lies in a hypersurface of $S$. Take $S = \RR$ for example, that a general point satisfies $\mathscr{P}$ implies that at most finitely many points in $\RR$ do not satisfy $\mathscr{P}$. Note that this is a stronger conclusion than `almost every point in $S$ satisfies $\mathscr{P}$' in the measure theoretic sense.

Let $f \colon M \to N$ be a Nash mapping between Nash manifolds $M$ and $N$. The usual semialgebraic version \label{SardThm} of Sard's theorem \cite{BCR98} says that the set of critical \emph{values} of $f$ is a semialgebraic subset of $N$ with smaller dimension. As we focus on polynomial maps in this article, we have the following stronger version of Sard's theorem about critical \emph{points} of $f$. 

\begin{lemma}\label{thm:sard}
 Let $f \colon \RR^m \to \RR^n$ be a nonconstant polynomial map. Then the set of critical points of $f$ is a subvariety of $\RR^m$, with dimension strictly less than $m$. 
\end{lemma}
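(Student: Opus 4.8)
The plan is to realize the critical locus of $f$ as the common zero set of the maximal minors of its Jacobian matrix, and then to argue that this zero set is a \emph{proper} algebraic subset of $\RR^m$, which forces its dimension below $m$.

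First I would fix notation: write $f = (f_1, \dots, f_n)$ with each $f_i \in \RR[X_1, \dots, X_m]$, and let $J_f = (\partial f_i / \partial X_j)_{i,j}$ be the $n \times m$ Jacobian, whose entries are themselves polynomials in $X_1, \dots, X_m$. Let $\rho \coloneqq \max_{x \in \RR^m} \rank J_f(x)$ be the generic rank of the differential. Because $f$ is nonconstant, at least one $\partial f_i / \partial X_j$ is a nonzero polynomial, so $\rho \ge 1$; this is precisely where the nonconstancy hypothesis is used, since if $J_f \equiv 0$ every point would be critical. A point $x$ is a critical point of $f$ exactly when $\rank J_f(x) < \rho$, equivalently when every $\rho \times \rho$ minor of $J_f(x)$ vanishes.

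The key observation is then that each $\rho \times \rho$ minor of $J_f$ is again a polynomial in $X_1, \dots, X_m$, since it is a polynomial in the entries of $J_f$, which are themselves polynomials. Listing these finitely many minors as $g_1, \dots, g_N$, the set of critical points is
\[
C = \{ x \in \RR^m : g_1(x) = \dots = g_N(x) = 0 \},
\]
which is by definition an algebraic subvariety of $\RR^m$. By the very definition of $\rho$ there is a point $x_0 \in \RR^m$ with $\rank J_f(x_0) = \rho$, hence at which some $g_k$ does not vanish; therefore $C \subsetneq \RR^m$. Since $\RR^m$ is irreducible — its coordinate ring $\RR[X_1, \dots, X_m]$ is an integral domain, equivalently $\RR^m$ is Zariski-dense in the irreducible variety $\CC^m$ — every proper algebraic subset of $\RR^m$ has dimension strictly less than $m$ (see \cite{BCR98}), so $\dim C < m$, which is the claim.

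I do not foresee a serious obstacle; the one point that needs care is the reading of ``critical point.'' For the statement to be correct, ``maximal rank'' of the differential must mean the generic rank $\rho$ actually attained on $\RR^m$, not $\min(m,n)$ — otherwise the conclusion already fails for $f(x,y) = (x,x)$, all of whose points would be deemed critical. Under the generic-rank convention the polynomiality of the maximal minors does the work, and the only external input is the standard fact that, inside an irreducible variety, a proper algebraic subset has strictly smaller dimension.
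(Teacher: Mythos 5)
Your proof is correct and follows essentially the same route as the paper's: characterize the critical locus as the common zero set of the $\rho\times\rho$ minors of the Jacobian, observe that at least one such minor is a nonzero polynomial (so the zero set is a proper subvariety), and conclude that the dimension drops. The only cosmetic difference is that you set $\rho = \max_x \rank J_f(x)$ directly, while the paper takes $d = \dim\Ima f$; for a polynomial (indeed semialgebraic) map these agree, and your formulation sidesteps having to invoke that identification.
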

\begin{proof}
Let $d \coloneqq \dim \Ima f$ and $\nabla f$ be the Jacobian of $f$ (i.e., the matrix of first order partial derivatives if we choose coordinates). Then every $d \times d$ minor of $\nabla f$ must vanish on the points $x \in \RR^m$ where $\nabla f(x)$ has rank strictly less than $d$. At least one of these minors is not  identically zero  since there are points $x \in \RR^m$ where $\nabla f(x)$ has rank exactly $d$ . Thus these minors define a subvariety whose dimension is strictly less than $m$.
\end{proof}
Aside from Sard's theorem, we also quote a few selected results and definitions from \cite{BCR98, Durfee83:tams} for the reader's easy reference. These results are somewhat technical and although they logically belong to this section, we will not need them until Section~\ref{sec:generic}. In particular,  Sections~\ref{sec:r-ranks} through \ref{sec:typmax} do not require any of the following.
\begin{theorem}[Nash Tubular Neighborhood]\label{thm:tubular}
Let $N \subset \RR^n$ be a Nash submanifold. Then there is an open semialgebraic neighborhood $U \subset \RR^n$ and a Nash retraction $f \colon U \to N$ such that $\operatorname{dist}(p, N) = \norm{p - f(p)}$ for each $p \in U$. Here $\lVert\, \cdot\, \rVert$ denotes the Euclidean norm in $\RR^n$.
\end{theorem}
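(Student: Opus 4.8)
The plan is to realise the neighbourhood as the diffeomorphic image, under the endpoint map, of a neighbourhood of the zero section of the normal bundle of $N$, performing each step within the Nash category. Set $k=\dim N$ and introduce the normal bundle
\[
\nu N \coloneqq \{(x,v)\in\RR^n\times\RR^n : x\in N,\ v\perp T_xN\}.
\]
The Gauss map $x\mapsto P_x$, where $P_x$ is the matrix of orthogonal projection of $\RR^n$ onto $T_xN$, is a Nash mapping on $N$ (locally it is a rational expression in the partial derivatives of a Nash parametrisation), so the defining condition $P_x v=0$ of $\nu N$ is Nash; hence $\nu N$ is a semialgebraic subset of $\RR^{2n}$ that is an analytic submanifold of dimension $k+(n-k)=n$, i.e.\ a Nash manifold, and the bundle projection $\pi\colon\nu N\to N$, $(x,v)\mapsto x$, is Nash. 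The endpoint map $\sigma\colon\nu N\to\RR^n$, $\sigma(x,v)=x+v$, is the restriction of a linear map and is therefore Nash; along the zero section $N_0\coloneqq\{(x,0):x\in N\}$ its differential at $(x,0)$ is the identity of $T_xN\oplus(T_xN)^{\perp}=\RR^n$, so by the Nash inverse function theorem $\sigma$ is a local Nash diffeomorphism at each point of $N_0$.

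The heart of the matter is to find a semialgebraic width function $\delta\colon N\to(0,\infty)$ such that $\sigma$ is injective on $\Omega_\delta\coloneqq\{(x,v)\in\nu N:\norm{v}<\delta(x)\}$, with $x$ the \emph{unique} nearest point of $N$ to $\sigma(x,v)$ for every $(x,v)\in\Omega_\delta$. Call $t>0$ \emph{admissible for} $x$ if $\sigma$ restricts to a Nash embedding of the set $A(x,t)\coloneqq\{(y,w)\in\nu N:\norm{y-x}<2t,\ \norm{w}<2t\}$ into $\RR^n$ and, in addition, $2t\le\dist(x,\cl N\setminus N)$ (with $\dist(x,\varnothing)\coloneqq\infty$). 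By the previous paragraph, and because a Nash submanifold is locally closed, the set of admissible $t$ is a nonempty interval $(0,\rho(x))$, and $\rho$ is positive on $N$, semialgebraic (being the supremum of a semialgebraic family, by Tarski--Seidenberg), and lower semicontinuous: if $\sigma$ is an embedding on $A(x_0,t_0+\varepsilon)$ then it is one on $A(x,t_0)$ whenever $\norm{x-x_0}<2\varepsilon$, since $A(x,t_0)\subseteq A(x_0,t_0+\varepsilon)$, while $\dist(\cdot,\cl N\setminus N)$ is continuous. Taking $\delta\coloneqq\tfrac13\min(1,\rho)$ then gives a positive lower-semicontinuous semialgebraic function with $\delta<\rho$ everywhere, and $\Omega_\delta$ is open in $\nu N$ because $\delta$ is lower semicontinuous.

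Granting such a $\delta$, I claim $\sigma|_{\Omega_\delta}$ is injective: if $(x,v),(x',v')\in\Omega_\delta$ with $\sigma(x,v)=\sigma(x',v')$ and, say, $\delta(x)\le\delta(x')$, then $\norm{x-x'}=\norm{v'-v}<\delta(x)+\delta(x')\le 2\delta(x')$ and $\norm{v}<\delta(x')$, so both points lie in $A(x',\delta(x'))$, on which $\sigma$ is injective (as $\delta(x')<\rho(x')$); hence $(x,v)=(x',v')$. Being moreover an immersion between manifolds of equal dimension $n$, $\sigma|_{\Omega_\delta}$ is then a Nash diffeomorphism onto the open semialgebraic set $U\coloneqq\sigma(\Omega_\delta)\supseteq N$, and $f\coloneqq\pi\circ(\sigma|_{\Omega_\delta})^{-1}\colon U\to N$ is a composition of Nash maps, hence a Nash retraction onto $N$. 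For the distance identity, fix $p=\sigma(x,v)\in U$ and let $y\in N$ realise $d\coloneqq\dist(p,N)$: such a $y$ exists in $\cl N$, and since $\norm{y-x}\le\norm{y-p}+\norm{p-x}\le 2\norm{v}<2\delta(x)\le\dist(x,\cl N\setminus N)$, in fact $y\in N$. Then $y$ is a critical point of $z\mapsto\norm{p-z}^2$ on $N$, so $p-y\perp T_yN$, i.e.\ $(y,p-y)\in\nu N$ with $\norm{p-y}=d\le\norm{v}<\delta(x)$; hence both $(x,v)$ and $(y,p-y)$ lie in $A(x,\delta(x))$, on which $\sigma$ is injective, forcing $(y,p-y)=(x,v)$. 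Therefore $f(p)=x=y$ is the unique nearest point of $N$ to $p$, and $\norm{p-f(p)}=\dist(p,N)$.

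I expect the genuine obstacle to be the middle step: exhibiting a single semialgebraic width function $\delta$ that simultaneously forces global injectivity of $\sigma$ and the distance-minimising property. The local picture near $N_0$ is immediate from the inverse function theorem; the work is in ruling out ``collisions from far away'' — two normal segments of bounded length meeting at the same point of $\RR^n$ — which is exactly why $\delta$ must be allowed to decay as $x$ escapes to infinity or approaches the frontier $\cl N\setminus N$, and in then verifying that the resulting function can be taken semialgebraic and lower semicontinuous. Semialgebraicity is essentially free from Tarski--Seidenberg; the lower semicontinuity (hence openness of $\Omega_\delta$), the positivity of $\rho$, and the bookkeeping tying $\delta$ to the nearest-point estimate are the points that require care. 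Everything else — Nashness of $\nu N$, of $\sigma$, and of $f$ — is routine.
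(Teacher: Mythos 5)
The paper does not prove this theorem; it is quoted verbatim from Bochnak--Coste--Roy \cite{BCR98} (cf.\ also \cite{Durfee83:tams}) as background, so there is no ``paper's own proof'' to compare against. Evaluating your argument on its merits: it is a correct and essentially complete sketch of the standard BCR proof, built around the endpoint map $\sigma(x,v)=x+v$ of the normal bundle $\nu N$, the Nash inverse function theorem along the zero section, and a semialgebraic width function $\delta$ tuned so that $\sigma$ is a diffeomorphism on the tube $\Omega_\delta$ and so that the nearest point of $N$ to any $p=\sigma(x,v)\in U$ is forced to lie inside $A(x,\delta(x))$.

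A few points that deserve flagging, since they are where a careless version of this argument would break. (i) Your definition of ``admissible'' correctly bundles the embedding condition with $2t\le\dist(x,\cl N\setminus N)$; without the latter, the distance argument would fail, since the minimiser $y\in\cl N$ of $z\mapsto\lVert p-z\rVert$ could a priori land on the frontier, where $p-y$ need not be normal to $N$. (ii) The semialgebraicity of $\rho$ is asserted via Tarski--Seidenberg; this is the right tool, but one should note that ``$\sigma$ is an embedding of $A(x,t)$'' reduces to injectivity plus immersion (an injective local diffeomorphism between equidimensional manifolds is automatically open, hence a homeomorphism onto its image), and both injectivity and the full-rank condition on the differential are first-order over the semialgebraic set $\nu N$, so the quantifier elimination applies. (iii) The claim that $\sigma|_{\Omega_\delta}$ is an immersion everywhere (not just on the zero section) is justified only because $\Omega_\delta\subseteq\bigcup_x A(x,\delta(x))$ with $\delta<\rho$, i.e.\ by construction; it would be false for the unrestricted endpoint map, as the example of a circle and its centre shows. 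You have the construction in place so this is fine, but it is worth saying explicitly rather than leaving ``immersion'' to be read off from the inverse function theorem at the zero section alone. With those caveats made explicit, the proof is sound and matches the approach of the cited reference.
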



\begin{definition}
A \emph{Whitney stratification} of a semialgebraic set $S \subseteq \RR^n$ is a finite partition of $S$ into semialgebraically connected submanifolds $S = \bigcup_i S_i$ satisfying the following two conditions, known respectively as the `frontier condition' and `Whitney condition (a)'.
\begin{enumerate}[\upshape (i)]
\item For $i \ne j$, if $S_i \cap \cl(S_j) \ne \varnothing$, then $S_i \subseteq \cl(S_j) \setminus S_j$.
\item For any sequence of points $(x_k)$ in a stratum $S_j$, if $x_k$ converges to a point $y$ in a stratum $S_i$, and the sequence of tangent $(\dim S_j)$-planes $\mathsf{T}_{x_k} S_j$ converges to a $(\dim S_j)$-plane $T$, then $T$ contains the tangent $(\dim S_i)$-plane $\mathsf{T}_{y} S_i$.
\end{enumerate}
\end{definition}

Given two finite families $\{ B_i \}$ and $\{ C_j \}$ of subsets of $\RR^n$, $\{ B_i \}$ is said to be compatible with $\{ C_j \}$ if $B_i \cap C_j = \varnothing$ or $B_i \subseteq C_j$ for all $i$ and $j$.

\begin{theorem}\label{thm:compat}
For semialgebraic subsets $S, C_1, \dots, C_m$ of $\RR^n$, $S$ admits a Whitney stratification compatible with $C_1, \dots, C_m$.
\end{theorem}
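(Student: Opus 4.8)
The plan is to prove, by induction on $\dim S$, the following slightly stronger statement: for every semialgebraic $S \subseteq \RR^n$ and every finite family $\mathcal C$ of semialgebraic subsets of $\RR^n$, the set $S$ has a Whitney stratification compatible with $\mathcal C$ all of whose strata are semialgebraically connected Nash submanifolds, with the extra property that whenever a stratum $S_i$ satisfies $S_i \subseteq \cl(C) \setminus C$ for some member $C \in \mathcal C$ that happens to be a Nash submanifold, the pair $(C, S_i)$ is Whitney regular (conditions (a) and (b)). The extra clause is what transports the incidence data downward through the induction; the case $\dim S = 0$ is trivial. Compatibility with $\mathcal C$ will be secured by refining, at each step, the partition of $S$ by the (finitely many) atoms of the Boolean algebra generated by the current family, so it suffices to make the strata refine that partition.

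For the inductive step put $d = \dim S$. By the standard decomposition of a semialgebraic set into finitely many Nash submanifolds and by semialgebraic dimension theory, the set $S_{\mathrm{reg}}$ of points near which $S$ is a $d$-dimensional Nash submanifold lying in a single atom is semialgebraic with $\dim (S \setminus S_{\mathrm{reg}}) < d$; in particular $S_{\mathrm{reg}}$ is a Nash $d$-manifold. I would then delete from $S_{\mathrm{reg}}$, for each atom $\delta$ and each member $C \in \mathcal C$ that is a Nash submanifold, the locus inside $S_{\mathrm{reg}} \cap \delta$ where the pair $(C, \cdot)$ fails Whitney regularity; by the generic-regularity lemma below this locus has dimension $< d$, so after these finitely many deletions one still has a Nash $d$-manifold, whose semialgebraically connected components $Y_1, \dots, Y_p$ (finitely many, semialgebraic) are taken to be the $d$-dimensional strata. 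The remainder $\Sigma := S \setminus (Y_1 \cup \dots \cup Y_p)$ is semialgebraic of dimension $< d$, so the induction hypothesis applies to $\Sigma$ with the enlarged finite family $\mathcal C \cup \{Y_1, \dots, Y_p, \cl(Y_1), \dots, \cl(Y_p)\}$. Compatibility with the $\cl(Y_j)$ forces every lower stratum either to lie in $\cl(Y_j) \setminus Y_j$ or to miss $\cl(Y_j)$ (it cannot meet $Y_j$), which is the frontier condition for pairs involving a $Y_j$; the extra clause of the induction hypothesis, applied with $C = Y_j$, gives Whitney regularity of $(Y_j, Z)$ for every lower stratum $Z \subseteq \cl(Y_j)$; and Whitney regularity of the pairs $(C, Y_j)$ for the Nash-submanifold members $C$ of the original family was arranged by the deletion step above. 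The frontier and Whitney conditions among the lower strata, as well as the extra clause for them, are furnished by the induction hypothesis applied to $\Sigma$. The dimension strictly decreases at each step, so the recursion terminates in at most $n$ steps.

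The substantive ingredient is the generic-regularity lemma: if $Y \subseteq \RR^n$ is a Nash submanifold and $A \subseteq \RR^n$ is semialgebraic with $A \cap Y = \varnothing$, then there is a semialgebraic $F \subseteq A$ with $\dim F < \dim A$ such that $A \setminus F$ is a Nash submanifold and $(Y, A \setminus F)$ satisfies Whitney conditions (a) and (b). I would prove it by first deleting from $A$ the set where $A$ fails to be locally a Nash submanifold of dimension $\dim A$ --- lower-dimensional by dimension theory and by the argument of Lemma~\ref{thm:sard} --- then noting that the Gauss maps $x \mapsto \mathsf T_x A$, $x \mapsto \mathsf T_x Y$ and the secant-direction map are semialgebraic, so that ``condition (a) fails at $x$'' and ``condition (b) fails at $x$'' each define a semialgebraic subset of $A$, and finally invoking Whitney's classical argument --- which goes through verbatim in the semialgebraic category, see the references quoted above --- that these failure loci are nowhere dense in $A$, hence of dimension $< \dim A$. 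Since condition (b) implies condition (a), this already secures condition (ii) of the Definition, which is all the paper's notion of Whitney stratification requires; carrying condition (b) along throughout costs nothing and keeps the induction self-contained. The step I expect to be the main obstacle is making precise the semialgebraicity of these tangent- and secant-plane loci together with the nowhere-density estimate; everything else is an organized descent on dimension.
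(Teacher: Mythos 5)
The paper offers no proof of this statement: Theorem~\ref{thm:compat} is one of the results it explicitly \emph{quotes} from \cite{BCR98, Durfee83:tams} (and Coste's notes) for later use in Section~\ref{sec:generic}, so there is no in-paper argument to measure yours against. What you have written is the standard textbook proof of that quoted theorem --- descending induction on dimension, a top-dimensional regular locus refined by the atoms of the Boolean algebra of the $C_j$, generic Whitney regularity, and recursion on a lower-dimensional remainder with an enlarged family --- and its skeleton is sound, including the strengthened inductive clause that transports regularity of the pairs $(Y_j, Z)$ down to the lower strata.

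Two points need attention. First, the frontier condition in the direction you do not check: for a top stratum $Y_j$ and a lower stratum $Z$, the condition forces $Y_j \cap \cl(Z) = \varnothing$ (since $Y_j \subseteq \cl(Z)\setminus Z$ is impossible by dimension), and this requires $Y_1 \cup \dots \cup Y_p$ to be open in $S$, i.e.\ $\Sigma$ closed in $S$. The Whitney-failure loci you delete are not automatically closed in $S_{\mathrm{reg}}$, so you must delete $\cl(F) \cap S$ rather than $F$; this is harmless because a semialgebraic set and its closure have the same dimension. (Distinct top strata cause no trouble: they are semialgebraically connected components of a single Nash manifold, hence closed in it, so neither meets the closure of the other.) Second, and more substantively, the ``generic-regularity lemma'' you isolate is not a routine verification: semialgebraicity of the failure loci does follow from Tarski--Seidenberg applied to the Gauss and secant-direction maps, but the assertion that these loci have dimension strictly less than $\dim A$ is Whitney's genericity theorem (the wing lemma), whose semialgebraic proof --- via the curve selection lemma or generic semialgebraic triviality --- is the entire mathematical content of Theorem~\ref{thm:compat}. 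You flag it honestly as the main obstacle, but as written your argument reduces the theorem to that lemma rather than proving it; since the paper itself only cites the full statement, this is a defensible level of rigor, but the proposal should not be read as self-contained.
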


\begin{proposition}
Let $f \colon S \to \RR^n$ be a semialgebraic function on a semialgebraic set. Then $S$ admits a Whitney stratification $S = \bigcup_i S_i$ such that each graph of $f \vert_{S_i}$ is a nonsingular semialgebraic set.
\end{proposition}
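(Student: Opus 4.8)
The plan is to transfer the whole question to the graph $G \coloneqq G(f) = \{(x,f(x)) : x \in S\}$, which is a semialgebraic subset of $\RR^m \times \RR^n$ when $S \subseteq \RR^m$, and to exploit that the coordinate projection $\pi \colon \RR^m \times \RR^n \to \RR^m$ restricts to a semialgebraic \emph{bijection} $\pi|_G \colon G \to S$. Since a semialgebraic bijection preserves dimension, $\dim \pi(A) = \dim A$ for every semialgebraic $A \subseteq G$; in particular, on a dense open semialgebraic subset of any smooth semialgebraic stratum of $G$ the differential of $\pi$ has maximal rank, equal to the dimension of that stratum.

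The first and main step is to produce a finite semialgebraic partition $G = \bigsqcup_j \Gamma_j$ into nonsingular semialgebraic sets such that each $\pi|_{\Gamma_j}$ is a Nash diffeomorphism of $\Gamma_j$ onto its image $A_j \coloneqq \pi(\Gamma_j)$, with $\{A_j\}$ a partition of $S$. This is a cell (Nash) decomposition adapted to the map $\pi|_G$; its existence follows from the standard semialgebraic machinery by descending induction on dimension, peeling off at each stage the lower-dimensional semialgebraic loci where $\pi|_G$ fails to have maximal differential rank or where the relevant sets or their $\pi$-images fail to be nonsingular. That these loci really are lower-dimensional is precisely what Lemma~\ref{thm:sard}, applied to the coordinate components of $\pi$, guarantees, so the induction terminates; see \cite{BCR98}. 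Along the way one records that $\Gamma_j = G(f|_{A_j})$, that $A_j$ is nonsingular (being Nash diffeomorphic to $\Gamma_j$), and that $f|_{A_j} = \pi' \circ (\pi|_{\Gamma_j})^{-1}$ is Nash, where $\pi'$ is the complementary projection onto $\RR^n$.

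Next I would refine $\{A_j\}$ to a Whitney stratification. By Theorem~\ref{thm:compat} applied to $S$ together with the family $\{A_j\}$, there is a Whitney stratification $S = \bigcup_i S_i$ compatible with $\{A_j\}$, whose strata may moreover be taken to be nonsingular semialgebraic sets (as the standard construction provides). Since the $A_j$ partition $S$, each $S_i$ lies in a unique $A_{j(i)}$, so $f|_{S_i}$ is a restriction of the Nash map $f|_{A_{j(i)}}$, and
\[
G(f|_{S_i}) = G(f|_{A_{j(i)}}) \cap \pi^{-1}(S_i) = \bigl(\pi|_{\Gamma_{j(i)}}\bigr)^{-1}(S_i) ,
\]
which is Nash diffeomorphic to the nonsingular semialgebraic set $S_i$ via $\pi|_{\Gamma_{j(i)}}$ and is therefore itself a nonsingular semialgebraic set. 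This yields the desired stratification.

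The step I expect to be the real obstacle is the first one. It is tempting to argue only that $S$ can be stratified so that $f$ restricts to a Nash map on each stratum and stop there, but that alone is not enough: the graph of a Nash function over a nonsingular semialgebraic base can still fail to be nonsingular --- for instance $x \mapsto x\sqrt{x+1}$ is Nash on $(-\tfrac12,\tfrac12)$, yet its graph is an arc of the nodal cubic $y^2 = x^2(x+1)$ passing through that curve's singular point. What saves the argument is that such behaviour is always confined to a semialgebraic subset of strictly smaller dimension, which one removes and treats inductively, reassembling the pieces into a Whitney stratification at the very end by Theorem~\ref{thm:compat}.
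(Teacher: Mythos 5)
The paper does not prove this Proposition; it is one of the results quoted from the references for the reader's convenience, so there is no internal proof to compare against. Evaluating your argument on its own: the architecture (pass to the graph $G$, cut $G$ into nonsingular pieces over which $\pi$ restricts to a Nash diffeomorphism, then Whitney-refine the projected partition) is sensible, and your example $x \mapsto x\sqrt{x+1}$ correctly identifies the subtlety. But you then fall into exactly that trap at the last step: you conclude that $G(f|_{S_i})$ is nonsingular ``because it is Nash diffeomorphic to the nonsingular semialgebraic set $S_i$.'' A Nash diffeomorphism does \emph{not} transport nonsingularity in the paper's sense. Built from your own example: with $A=B=(-\tfrac12,\tfrac12)\times\RR$, $\Phi(x,y)=(x,\, y+x\sqrt{x+1})$, and $C=(-\tfrac12,\tfrac12)\times\{0\}$, all of $A$, $B$, $C$ are nonsingular and $\Phi$ is a Nash diffeomorphism, yet $\Phi(C)$ is the graph of $x\sqrt{x+1}$, which passes through the node of $y^2=x^2(x+1)$ and is not nonsingular. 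Concretely, $G(f|_{S_i})$ is an \emph{open} subset of the nonsingular $\Gamma_{j(i)}$ only when $S_i$ is open in $A_{j(i)}$, which fails precisely for the new lower-dimensional strata produced by the Whitney refinement --- the only strata for which anything still needs proving.

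Two further remarks. First, Lemma~\ref{thm:sard} is stated for a polynomial map on all of $\RR^m$; the rank-drop locus of $\pi|_G$ inside the semialgebraic set $G$ is not literally the critical set of such a map (the ambient projection is a submersion with no critical points at all), so that invocation should be replaced by an appeal to semialgebraic cell decomposition. Second, the gap is fillable by iteration: after Whitney-refining, the top-dimensional strata inside each $A_j$ are open in $A_j$ and hence have graphs open in $\Gamma_j$, so nonsingular; for the remaining lower-dimensional strata, remove the strictly lower-dimensional locus where the graph fails to be nonsingular and re-apply Theorem~\ref{thm:compat}. The maximal problematic dimension drops each round, so the process terminates, but your write-up asserts the end result of this descent without performing it.
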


\begin{proposition}\label{prop:smooth}
Let $S$ be a nonsingular semialgebraic set, and $f \colon S \to \RR^n$ be a function such that $G(f)$ is nonsingular and semialgebraic. Then the set of points of $S$ where $f$ is not differentiable is contained in a closed lower-dimensional semialgebraic subset of $S$.
\end{proposition}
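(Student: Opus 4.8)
The plan is to express $f$ as the composition of a coordinate projection with the inverse of another coordinate projection, and then to combine the semialgebraic version of Sard's theorem with the inverse function theorem. Write $S \subseteq \RR^m$, so that $G(f) \subseteq \RR^m \times \RR^n$, and let $p \colon \RR^m \times \RR^n \to \RR^m$ and $q \colon \RR^m \times \RR^n \to \RR^n$ be the two coordinate projections. Since $f$ is a function defined on all of $S$, the restriction $\pi \coloneqq p\vert_{G(f)} \colon G(f) \to S$ is a bijection whose inverse is $s \mapsto (s, f(s))$, and $f = q\vert_{G(f)} \circ \pi^{-1}$. By hypothesis both $G(f)$ and $S$ are nonsingular semialgebraic sets, hence Nash manifolds, and both $\pi$ and $q\vert_{G(f)}$ are Nash mappings (restrictions of linear maps to Nash manifolds, with semialgebraic graphs by Tarski--Seidenberg). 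Because a semialgebraic bijection preserves dimension \cite{BCR98}, we get $\dim G(f) = \dim S$; I will call this common value $\delta$, treating each connected component separately should $S$ fail to be equidimensional.

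First I would apply the semialgebraic version of Sard's theorem (recalled in Section~\ref{sec:semialgebraic}) to the Nash map $\pi$: the set $C \subseteq S$ of its critical values is semialgebraic of dimension strictly less than $\delta$. Next, for any $y \in S \setminus C$ the unique preimage $x = (y, f(y))$ is a regular point of $\pi$, and since $\dim_x G(f) = \dim_y S = \delta$ the differential $d\pi_x$ is a linear isomorphism; by the analytic inverse function theorem, $\pi$ restricts to a diffeomorphism $\pi\vert_U \colon U \to W$ from a neighborhood $U$ of $x$ in $G(f)$ onto a neighborhood $W$ of $y$ in $S$. For $y' \in W$ its local inverse $\psi(y')$ lies in $G(f)$ and satisfies $p(\psi(y')) = y'$, so injectivity of $\pi$ forces $\psi(y') = (y', f(y'))$; hence $f\vert_W = q \circ \psi$ is analytic, in particular differentiable at $y$. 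Thus the set of points where $f$ is not differentiable is contained in $C$, and replacing $C$ by its closure in $S$ — still semialgebraic, still of dimension at most $\dim C < \delta$ — yields the desired closed lower-dimensional semialgebraic subset of $S$.

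The only mildly delicate points are (i) checking that $\pi$ really is a Nash mapping between Nash manifolds, which is exactly where the assumption that $G(f)$ (and not merely $f$) is nonsingular and semialgebraic enters, and (ii) the dimension bookkeeping $\dim G(f) = \dim S$ that makes "$d\pi_x$ of full rank" equivalent to "$\pi$ a local diffeomorphism near $x$". Both are standard consequences of semialgebraic dimension theory, so I do not anticipate a genuine obstacle; the argument is essentially a generic-smoothness statement for the parametrization of $G(f)$ by $S$.
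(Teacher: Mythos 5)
The paper does not actually prove Proposition~\ref{prop:smooth}; it is one of several results quoted verbatim from \cite{BCR98, Durfee83:tams} ``for the reader's easy reference,'' so there is no in-paper argument to compare against. Your strategy is the standard one and is sound: factor $f$ through the bijective Nash map $\pi = p|_{G(f)} \colon G(f) \to S$, apply the semialgebraic Sard theorem to get a lower-dimensional set $C$ of critical values, and show $\pi$ is a local analytic diffeomorphism over $S \setminus C$, so that $f = q \circ \pi^{-1}$ is analytic there.

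One step, however, is asserted with the wrong justification. You claim $\dim_x G(f) = \dim_y S = \delta$ for every regular value $y$ with preimage $x$, citing preservation of dimension under semialgebraic bijections. That fact only gives the \emph{global} equality $\dim G(f) = \dim S$, not the local one, and $G(f)$ need not be equidimensional even when $S$ is connected: take $S = \RR$, $f(0) = 0$, and $f(x) = 1/x$ for $x \neq 0$. Then $G(f) = V(x^2y - x,\, xy^2 - y)$ is a nonsingular semialgebraic set whose two components have dimensions $1$ and $0$, so $\dim_{(0,0)} G(f) = 0 \neq 1 = \dim_0 S$; passing to connected components of $S$ does not remove this mismatch because $\pi^{-1}$ of a component of $S$ can still fail to be equidimensional. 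Your conclusion is nonetheless true, but the correct local argument is the following: at a regular value $y$, $d\pi_x$ is surjective, so by the submersion theorem $\pi$ is locally equivalent near $x$ to a linear projection $\RR^{\dim_x G(f)} \to \RR^{\dim_y S}$; if $\dim_x G(f) > \dim_y S$ the fiber $\pi^{-1}(y)$ near $x$ would be a positive-dimensional submanifold, contradicting the injectivity of $\pi$. Hence $\dim_x G(f) = \dim_y S$, $d\pi_x$ is an isomorphism, and the rest of your argument goes through. (In the example above, $(0,0)$ is indeed a critical point of $\pi$, so $0 \in C$ and is correctly excluded.) With that repair the proof is complete.
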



\section{$X$-ranks}\label{sec:r-ranks}

There has been several attempts to describe tensor ranks in different settings in a unified and general way, e.g.\ \cite{BlekTei14:ma, Teitler14:arxiv} but they do not usually include nonnegative rank as a special case. Here we introduce a generalization of $X$-rank \cite{Zak2004}  to the setting of an arbitrary cone $X$ and coefficients in a semiring $\mathcal{R}$ in order to treat nonnegative, real, and complex tensor ranks in a unified setting.

\begin{definition}
Let $\KK$ be a field, and $\mathcal{R} \subseteq \KK$ be a semiring. Given a vector space $V$ over $\KK$, and a subset $X \subseteq V$, an \emph{$\mathcal{R}$-span} of $X$, denoted by $\spa_\mathcal{R} (X)$, is the set of all finite $\mathcal{R}$-linear combinations of elements of $X$, that is,
\[
\spa_\mathcal{R} (X) \coloneqq \left\{ \sum_{i=1}^k \alpha_i x_i : k > 0, \; \alpha_i \in \mathcal{R}, \; x_i \in X \right\}.
\]
\end{definition}

When $\mathcal{R} = \KK$, an $\mathcal{R}$-span is a subspace. When $\KK = \RR$ and $\mathcal{R} = \RR_+$, an $\mathcal{R}$-span is a convex cone. 
We will denote the $\mathbb{R}_+$-cone of nonnegative vectors in a vector space $V$ by either\footnote{Allowing both superscript and subscript provides notational flexibility when indices or powers are involved.} $V^+$ or $V_+$. Note that in order to specify $V_+$, we will need to first specify a choice of basis on $V$. See \cite{QComonLim14:arxiv} for further discussions. With this notation, $V_1^+ \otimes \dots \otimes V_d^+$ is the cone of nonnegative tensors as defined in \cite[Definition~2]{QComonLim14:arxiv}.

\begin{definition}\label{def:cone}
We say $X$ is an \emph{$\mathcal{R}$-cone}, if for $x \in X$ we always have $\lambda x \in X$ for any $\lambda \in \mathcal{R}$. Given an $\mathcal{R}$-cone $X$, for any $p \in \spa_\mathcal{R} (X)$, the \emph{$X$-rank} of $p$, $\rank_X(p)$, is defined to be 
\[
\rank_X(p) \coloneqq \min \{r : p = x_1 + \cdots + x_r; \; x_1, \dots, x_r \in X \}.
\]
\end{definition}
Recall that in algebraic geometry, the \emph{affine cone} $X \subseteq \mathbb{K}^n$ over a projective variety $Y \subseteq \mathbb{KP}^{n-1}$ is defined as $X \coloneqq \pi^{-1} (Y) \cup \{0\}$ where $\pi \colon \mathbb{K}^n \setminus \{0\} \to \mathbb{KP}^{n-1}$, $(x_1, \dots,x_n) \mapsto [x_1 : \dots : x_n]$ is the canonical projection. Note that an affine cone is a $\KK$-cone in the sense of Definition~\ref{def:cone}.
\begin{enumerate}[\upshape (i)]
\item Let $\mathcal{R} = \KK = \RR$, $V = V_1\otimes \dots \otimes V_d$, and $X$ be the cone of tensors of rank $\le 1$ (i.e., affine cone over the real projective Segre variety). Then $\rank_X(p)$ is the real rank of $p$, usually denoted $\rank_{\RR}(p)$. Real tensor rank is invariant under the action of $\operatorname{GL}(V_1) \times \cdots \times \operatorname{GL}(V_d)$, where $\operatorname{GL}(V)$ denotes the general linear group of $V$.

\item Let $\mathcal{R} = \RR_+$, $\KK = \RR$, $V = V_1\otimes \dots \otimes V_d$, and $X$ be the $\RR_+$-cone of nonnegative tensors of rank $\le 1$. Then $\rank_X(p)$ is the nonnegative rank of $p$, usually denoted $\rank_+(p)$. Nonnegative tensor rank is invariant under the action of
\[
\{(g_1, \dots, g_d) \in \operatorname{GL}(V_1) \times \cdots \times \operatorname{GL}(V_d) \colon g_i(V_i^+) \subseteq V_i^+, \; i = 1, \dots, d \}.
\]
Note that this set is just a monoid --- it does not necessarily contain the inverses of its elements.

\item Let $\mathcal{R} = \KK$ be an algebraically closed field and $X$ be the affine cone over an irreducible nondegenerate projective variety. Then $\rank_X(p)$ is the $X$-rank as defined in \cite{Zak2004, Land12, BlekTei14:ma}. $X$-rank is invariant under the automorphism group of $X$, a subgroup of $\operatorname{GL}(V)$.
\end{enumerate}

The discussions above are purely algebraic but subsequent discussions will require topological structures on our vector space and field. Recall that a topological vector space over a topological field is one where the vector addition and scalar multiplication are continuous. We will not require any results regarding topological vector space beyond its definition.
\begin{definition}\label{def:typical}
Let $V$ be a finite-dimensional topological vector space over a topological field $\mathbb{K}$ of characteristic zero, and $\mathcal{R} \subseteq \mathbb{K}$ be a semiring.  Let $X \subseteq V$ be an $\mathcal{R}$-cone such that $\spa_\mathcal{R} (X)$ contains a nonempty open subset of $V$. If the set $\{ p \in \spa_\mathcal{R} (X) : \rank_X(p) = r \}$ contains a nonempty open subset of $V$, then $r$ is called a \emph{typical} $X$-rank. In particular, when $\KK = \CC$ and $V$ is endowed with the Zariski topology, $r$ is called a \emph{complex generic} $X$-rank whenever $\{ p \in \operatorname{span}_{\CC} (X) : \rank_X(p) = r \}$ contains a nonempty Zariski open subset of $V$. The \emph{maximum typical $X$-rank} is
\[
\max \{ r : r \; \text{is a typical $X$-rank of} \;\spa_\mathcal{R} (X) \},
\]
whereas the \emph{maximum $X$-rank} is
\[
\max \{ \rank_X(p) :  p \in \spa_\mathcal{R} (X) \}.
\]
\end{definition}

\label{measureexp}To provide a more familiar perspective, when $\mathbb{K} = \RR$ or $\CC$ and $V$ is endowed with the Euclidean topology and the Lebesgue measure, then $r$ is a typical $X$-rank whenever $\{ p \in \spa_\mathcal{R} (X) : \rank_X(p) = r \}$ has positive measure.

 Recall that a variety is called \emph{irreducible} if it is not the union of two nonempty proper subvarieties. If the ideal of an affine variety $X \subseteq \CC^n$ is generated by  polynomials with real coefficients $f_1, \dots, f_k$, we will denote by $X(\RR)$ the set of \emph{real points} of $X$, i.e., $X(\RR) = X \cap \RR^n$. In fact $X(\RR)$ equals the zero locus of $f_1, \dots, f_k$ in $\RR^n$. On the other hand, if $Y \subseteq \RR^n$ is a real variety defined by real polynomials $f_1, \dots, f_k$, we will denote by $Y(\CC)$ the \emph{complexification} of $Y$, the complex variety  defined by $f_1, \dots, f_k$ in $\CC^n$. For an irreducible real affine variety  $Y \subseteq \RR^n$, its complexification $Y(\CC)$ is also irreducible \cite{BlekTei14:ma}. Furthermore $Y$ is Zariski dense in $Y(\CC)$ if and only if $Y(\CC)$ has a nonsingular real point \cite{BlekTei14:ma, Sottile16:arxiv}.  

A (projective) variety $X \subseteq V$ ($X \subseteq \PP V$) is said to be \emph{nondegenerate} if $X$ is not contained in any hyperplane. It is shown in \cite[Theorem~2]{BlekTei14:ma} that when $X$ is an irreducible nondegenerate real projective variety whose complexification $X(\CC)$ has a real smooth point, there is a unique complex generic $X$-rank, and it is equal to the minimum real typical $X$-rank. For example, the space of $2 \times 2 \times 2$ tensors has the complex generic rank $2$ and the real typical ranks $2$ and $3$ \cite{DesiL08:simax}.

We deduce the following lemma using an argument in \cite{Friedl12:laa}, where it is proved for the case $\KK = \RR$, $V = V_1 \otimes V_2 \otimes V_3$, and $X =\{ A \in V : \rank_{\RR}(A) \le 1\}$. See also \cite[Theorem~1.1]{BernardiBlekhermanOttaviani15:arXiv} for the case where $X$ is the affine cone of a nondegenerate irreducible real projective variety.
\begin{lemma}\label{le:interval}
Let $\KK = \RR$ and $X$ be a nonempty semialgebraic $\mathcal{R}$-cone  whose Zariski closure $\overline{X}$ is a nondegenerate irreducible real variety that is Zariski dense in $\overline{X}(\CC)$.  If $m$ and $M$ are two typical $X$-ranks, then any integer between $m$ and $M$ is also a typical $X$-rank.
\end{lemma}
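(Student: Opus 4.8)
\emph{Proof plan.} The plan is to encode ``typical rank'' as strict growth of a nested family of closed semialgebraic sets and then argue that this family cannot skip an integer. Fix a basis so that $V=\RR^N$ with its Euclidean topology, and for $k\ge 0$ put $R_k\coloneqq\{p\in\spa_\mathcal{R}(X):\rank_X(p)\le k\}$, the image of $X\times\cdots\times X$ ($k$ factors) under the polynomial map $(x_1,\dots,x_k)\mapsto x_1+\cdots+x_k$; this is semialgebraic by Tarski--Seidenberg, and since $0\in X$ (as $\mathcal{R}$ is a semiring and $X$ an $\mathcal{R}$-cone) one has $R_{k-1}\subseteq R_k$ and, crucially, $R_k=R_{k-1}+X$. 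Let $\mathcal{T}_k\coloneqq\cl(R_k)$, so $\mathcal{T}_0\subseteq\mathcal{T}_1\subseteq\cdots$; note $\mathcal{T}_k=\cl(\mathcal{T}_{k-1}+X)$ since $\cl(A+B)=\cl(\cl(A)+B)$ in a topological vector space. First I would record the two facts from \cite{BCR98} used throughout: a semialgebraic $S\subseteq\RR^N$ has nonempty interior iff $\dim S=N$, and $\dim(\cl(S)\setminus S)<\dim S$. With Definition~\ref{def:typical} these give the criterion that $k$ is a typical $X$-rank iff $\dim(R_k\setminus R_{k-1})=N$, which a short argument shows is equivalent to $\mathring{\mathcal{T}}_{k-1}\subsetneq\mathring{\mathcal{T}}_k$ (the converse uses that an open set contained in $\mathcal{T}_{k-1}$ lies in $\mathring{\mathcal{T}}_{k-1}$, and that $\mathring{\mathcal{T}}_{k-1}$ minus the lower-dimensional set $\mathcal{T}_{k-1}\setminus R_{k-1}$ still meets $R_{k-1}$ densely).

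Next I would pin down the smallest typical rank. Set $\mu\coloneqq\min\{k:\dim R_k=N\}$; this is finite because the given typical rank $m$ forces $\dim R_m=N$. For $k<\mu$ we have $\dim R_k<N$, so $k$ is not typical, while $\mu$ is typical: removing the lower-dimensional closed set $\mathcal{T}_{\mu-1}$ from a nonempty open subset of $R_\mu$ leaves a nonempty open set of points of rank exactly $\mu$. Hence $\mu$ is the minimum typical rank and $\mu\le m$; under the stated hypotheses on $\overline{X}$ one may further identify $\mu$ with the complex generic $X$-rank via \cite[Theorem~2]{BlekTei14:ma} (and the fact that the real points of $\overline{X}$ are Zariski dense in $\overline{X}(\CC)$, so that the real and complex dimensions of the secant loci agree) --- this is where nondegeneracy, irreducibility, and Zariski density enter. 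In particular $\mathring{\mathcal{T}}_k\ne\varnothing$ for every $k\ge\mu$.

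The heart of the matter is the \emph{propagation claim}: if $k\ge\mu$ and $k$ is not a typical $X$-rank, then neither is $k+1$. Granting it, the lemma follows immediately: if some integer $n$ with $m\le n\le M$ failed to be typical, then (since $n\ne m,M$, we have $m<n<M$, hence $n>\mu$ and $n<M$) repeated application of the claim would make $n,n+1,\dots,M$ all non-typical, contradicting that $M$ is typical. To prove the claim, observe that $k$ not typical gives $\mathring{\mathcal{T}}_k\subseteq\mathcal{T}_{k-1}$: indeed $R_k$ is dense in $\mathring{\mathcal{T}}_k$, while $(R_k\setminus R_{k-1})\cap\mathring{\mathcal{T}}_k$ and $\mathcal{T}_k\setminus R_k$ are both lower-dimensional, so $R_{k-1}\cap\mathring{\mathcal{T}}_k$ is already dense in the open set $\mathring{\mathcal{T}}_k$, whence $\mathring{\mathcal{T}}_k\subseteq\cl(R_{k-1})=\mathcal{T}_{k-1}$. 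Thus $\mathcal{T}_k=\cl(\mathring{\mathcal{T}}_k)\cup\partial\mathcal{T}_k\subseteq\mathcal{T}_{k-1}\cup\partial\mathcal{T}_k$; feeding this into $\mathcal{T}_{k+1}=\cl(\mathcal{T}_k+X)$ and using $\cl(\mathcal{T}_{k-1}+X)=\mathcal{T}_k$ yields $\mathcal{T}_{k+1}\setminus\mathcal{T}_k\subseteq\cl(\partial\mathcal{T}_k+X)$, so it remains to see that this last set is lower-dimensional --- then $\mathring{\mathcal{T}}_{k+1}\setminus\mathcal{T}_k$ is empty, i.e.\ $\mathring{\mathcal{T}}_{k+1}=\mathring{\mathcal{T}}_k$, i.e.\ $k+1$ is not typical.

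I expect this last point --- equivalently, that for $k\ge\mu$ the Euclidean closures $\mathcal{T}_k$ are ``pure'' enough that their boundary strata do not blow up to full dimension upon adding a rank-one term --- to be the main obstacle, and it is precisely here that the geometry of $\overline{X}$ is essential: because $\overline{X}$ is irreducible with Zariski-dense real points, the real secant locus $R_k$ has the same dimension as the irreducible complex secant variety $\sigma_k(\overline{X}(\CC))$, and for $k\ge\mu$ its Euclidean closure is a regular closed set, so that $\dim\partial\mathcal{T}_k<N$ and, more to the point, a generic point of $R_{k+1}$ has a representation $q+x$ with $q\in\mathring{\mathcal{T}}_k\subseteq\mathcal{T}_{k-1}$ and $x\in X$; writing $q=\lim_n s_n$ with $s_n\in R_{k-1}$ then gives $q+x=\lim_n(s_n+x)\in\cl(R_{k-1}+X)=\mathcal{T}_k$, which forces $\mathcal{T}_{k+1}=\mathcal{T}_k$ and completes the claim. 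For $\KK=\RR$ with $X$ the Segre cone this is essentially Friedland's argument in \cite{Friedl12:laa}, and for general nondegenerate irreducible $\overline{X}$ it is the content of \cite[Theorem~1.1]{BernardiBlekhermanOttaviani15:arXiv}, which I would cite rather than reprove the regularity statement in full.
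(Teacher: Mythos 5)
Your overall strategy --- a propagation argument showing that if $k\ge\mu$ fails to be typical then so does $k+1$, and thus non-typicality would spread all the way up to $M$ --- is the same as the paper's. Your reformulation in terms of the Euclidean closures $\mathcal{T}_k=\cl(R_k)$ and the interiors $\mathring{\mathcal{T}}_k$ is a sound bookkeeping device, and the small dimension-theoretic facts you invoke ($\dim(\cl(S)\setminus S)<\dim S$, full-dimensional iff nonempty interior) are used correctly. However, there is a genuine gap at the single place that actually carries the weight of the argument, and the surrounding reduction is a red herring.

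The gap is the assertion that ``a generic point of $R_{k+1}$ has a representation $q+x$ with $q\in\mathring{\mathcal{T}}_k$.'' Equivalently, for a generic tuple $(x_1,\dots,x_{k+1})\in X^{k+1}$ the partial sum $q=x_1+\cdots+x_k$ must land in $\mathring{\mathcal{T}}_k$, i.e.\ the semialgebraic set $\varphi_k^{-1}(R_k\setminus\mathring{\mathcal{T}}_k)$ must have dimension strictly less than $k\dim X$. This does \emph{not} follow from $\dim(R_k\setminus\mathring{\mathcal{T}}_k)<N$, because the map $\varphi_k$ has high-dimensional fibers: a priori a lower-dimensional subset of the image could have a full-dimensional preimage. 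This is precisely the point where the paper's proof does work: because $\overline{X}$ is irreducible and Zariski dense in $\overline{X}(\CC)$, every Euclidean open subset of $X$ has dimension $\dim X$, and by the fiber-dimension theorem (the cited \cite[Exercise~II.3.22]{Hart77}) the image $\varphi_k(\mathcal{U}_1\times\cdots\times\mathcal{U}_k)$ of any nonempty open product has full dimension $n$. Therefore if $\varphi_k^{-1}$ of a lower-dimensional set contained a product open set, its image would be a full-dimensional subset of a lower-dimensional set --- contradiction. You never invoke this (or any substitute), and instead defer the missing step to \cite[Theorem~1.1]{BernardiBlekhermanOttaviani15:arXiv} and \cite{Friedl12:laa}. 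But the former \emph{is} essentially the lemma being proved (for the special case of affine cones over projective varieties), so citing it is circular; and the latter is the source from which the present lemma is abstracted, so it should be used as a model for the argument rather than as a black box.

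Separately, the intermediate reduction ``$\mathcal{T}_{k+1}\setminus\mathcal{T}_k\subseteq\cl(\partial\mathcal{T}_k+X)$, so it remains to see that this last set is lower-dimensional'' is a dead end: $\partial\mathcal{T}_k+X$ generically has dimension $(N-1)+\dim X\ge N$, so no amount of dimension counting on the \emph{image side} will show it is small. The difficulty lives on the \emph{domain side} (in $X^{k+1}$), which is why your final paragraph quietly switches to the generic-tuple formulation --- and that is exactly where you would need the fiber-dimension argument you are missing. In short, your framework is fine, but the one step where the irreducibility and real-density hypotheses on $\overline{X}$ must be used is asserted rather than proved, and this is the step that the paper's proof actually establishes.
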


\begin{proof}
 Let $\dim V = n$.  For each $k \in \mathbb{N}$, define the polynomial map $\varphi_k$ by
\[
  \varphi_k \colon X \times \dots \times X \to \spa_\mathcal{R} (X),\quad
  (x_1, \dots, x_k) \mapsto x_1 + \cdots + x_k.
\]
Assume \textsc{wlog} that $m \le M$ and suppose that $r \in \{ m, \dots, M \}$ is the minimum integer which is not a typical $X$-rank. For any fixed $k \in \mathbb{N}$ and for any open subset $\mathcal{W} \subseteq V$, $\varphi_k^{-1} (\mathcal{W})$ is open in $X \times \dots \times X$; thus  it is a union of open subsets of the form $\mathcal{U}_1 \times \cdots \times \mathcal{U}_k$ where each $\mathcal{U}_i$ is open in $X$.  Since  $\overline{X}$ is irreducible, the dimension of each $\mathcal{U}_i$ equals $\dim X$.  By \cite[Exercise~II.3.22]{Hart77}, the dimension of each $\varphi_r (\mathcal{U}_1 \times \cdots \times \mathcal{U}_r)$ equals $n$. So every nonempty open subset of  $\Ima{\varphi_r}$  has dimension $n$.  Since $r$ is not a typical rank, $\Ima{\varphi_r} \setminus \Ima{\varphi_{r-1}}$ does not contain a subset of dimension $n$, and thus  $\Ima{\varphi_r} \setminus \Ima{\varphi_{r-1}}$ does not contain an open subset of $\Ima{\varphi_r}$, which implies that a general  $p = x_1 + \cdots + x_r \in \Ima{\varphi_r}$ is  within $\Ima{\varphi_{r-1}}$, i.e., $p = \widetilde{x}_1 + \cdots + \widetilde{x}_{r-1}$. Hence a general $q = x_1 + \cdots + x_{r+1} \in \Ima{\varphi_{r+1}}$ can be written with $r$ summands as $q = \widetilde{x}_1 + \cdots + \widetilde{x}_{r-1} + x_{r+1}$, which is in $\Ima{\varphi_r}$. But we may repeat the same argument to conclude that $q$ is in $\Ima{\varphi_{r-1}}$. So by induction, a general point in $\Ima{\varphi_M}$ is in $\Ima{\varphi_{r-1}}$, i.e., $\dim \Ima{\varphi_M} \setminus \Ima{\varphi_{r-1}} < \dim V$, contradicting our assumption that $M$ is a typical $X$-rank.
\end{proof}

We will require the use of Lemma~\ref{le:interval} in Propositions~\ref{prop:typical1} and \ref{prop:typical2}. This simple lemma is surprisingly potent. As an illustration we provide a short proof for the main result in \cite{Blek13:fcm} (see also  \cite{BernardiBlekhermanOttaviani15:arXiv}), that every integer between $\lfloor (d+2)/2 \rfloor$ and $d$ is a typical rank of $\mathsf{S}^d(\RR^2)$, originally conjectured in \cite{ComOtt12:lmla}.

\begin{corollary}[Blekherman]
Every $m$ with $\lfloor (d+2)/2 \rfloor \le m \le d$ is a typical rank of $\mathsf{S}^d(\RR^2)$.
\end{corollary}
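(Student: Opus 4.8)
The plan is to realize the set of typical ranks of $\mathsf{S}^d(\RR^2)$ as a run of consecutive integers via Lemma~\ref{le:interval}, and then to locate the two endpoints of that run. First I would take $V = \mathsf{S}^d(\RR^2)$, $\mathcal{R} = \RR$, and let $X = \{\lambda \ell^d : \lambda \in \RR, \, \ell \in \RR^2\}$, the real affine cone over the degree-$d$ rational normal curve $C_d \subseteq \PP^d$. Then $\spa_\RR(X) = V$ and $\rank_X$ is exactly the real Waring rank on binary $d$-forms, so the typical $X$-ranks coincide with the typical ranks of $\mathsf{S}^d(\RR^2)$ in the sense of the corollary. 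One checks that $X$ is semialgebraic (it is the image of the polynomial map $(\lambda,\ell)\mapsto\lambda \ell^d$, and in fact coincides with its own Zariski closure), that it is an $\RR$-cone, that its Zariski closure is irreducible and nondegenerate because $C_d$ is, and that it is Zariski dense in its complexification because the affine cone $\overline{X}(\CC)$ has the nonsingular real point $x^d$. Hence Lemma~\ref{le:interval} applies, and the typical $X$-ranks form a set $\{m, m+1, \dots, M\}$ of consecutive integers.

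For the lower endpoint I would invoke the Blekherman--Teitler theorem quoted above (\cite[Theorem~2]{BlekTei14:ma}), which is applicable since $C_d$ is irreducible, nondegenerate, and its complexification has a real smooth point: it gives that $m$ equals the complex generic $X$-rank, which is the classical generic Waring rank of a binary $d$-form, namely $\lceil (d+1)/2\rceil = \lfloor (d+2)/2\rfloor$; in particular the interval above is nonempty. It then suffices to show $M \ge d$. Because the real Waring ranks of binary $d$-forms are bounded, it is enough to produce a nonempty Euclidean-open set $\mathcal{O}\subseteq V$ on which $\rank_\RR \ge d$: the finitely many semialgebraic level sets $\{\,p\in\mathcal{O} : \rank_\RR(p) = k\,\}$ with $k \ge d$ then cover $\mathcal{O}$, so one of them is full-dimensional, some $k \ge d$ is a typical rank, and since $m \le d$ the interval $\{m, \dots, M\}$ contains $d$, which proves the corollary.

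Constructing $\mathcal{O}$ is the core of the argument and the step I expect to be the main obstacle. I would center it at $f_0 := \operatorname{Re}\bigl((x+iy)^d\bigr)$, which is harmonic, so $x^2+y^2$ lies in the apolar ideal $f_0^{\perp}$; assume $d \ge 3$, the cases $d \le 2$ being classical with a single-point interval. For $f$ near $f_0$ the form $f$ is not a $d$-th power, so its first catalecticant map $\mathsf{S}^1(\RR^2)\to\mathsf{S}^{d-1}(\RR^2)$ is injective and $(f^{\perp})_{d-1}$ is a subspace of dimension $d-2$ depending continuously on $f$; at $f = f_0$ it equals $(x^2+y^2)\cdot \mathsf{S}^{d-3}(\RR^2)$, every nonzero element of which is divisible by $x^2+y^2$ and hence is not a squarefree product of real linear forms. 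By Sylvester's theorem together with the apolarity lemma, $\rank_\RR(f)\le d-1$ would force $(f^{\perp})_{d-1}$ to contain a squarefree form with $d-1$ distinct real roots; but the closure of the cone of forms of degree $d-1$ with all roots real is a closed cone meeting $(x^2+y^2)\cdot\mathsf{S}^{d-3}(\RR^2)$ only at the origin, so after intersecting with the unit sphere and using compactness it stays at positive distance from every $(d-2)$-plane sufficiently close to that one. Hence $\rank_\RR(f) \ge d$ on a neighborhood $\mathcal{O}$ of $f_0$, completing the plan. The delicate points to get right are the continuity and dimension count for $(f^{\perp})_{d-1}$ near $f_0$, and the compactness estimate that upgrades the pointwise statement $\rank_\RR(f_0) \ge d$ to an open condition.
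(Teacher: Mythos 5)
Your overall strategy matches the paper's: reduce to the endpoints via Lemma~\ref{le:interval}, identify the minimum typical rank with the complex generic rank $\lfloor(d+2)/2\rfloor$ via \cite[Theorem~2]{BlekTei14:ma}, and then show that some $k\ge d$ is typical. Where you diverge is in the upper-endpoint argument. The paper cites \cite{CauRe11:ampa}, which gives the clean characterization that $f\in\mathsf{S}^d(\RR^2)$ has real rank exactly $d$ if and only if $f$, viewed as a binary $d$-form, has $d$ distinct real roots; this is a manifestly open condition, so $d$ is a typical rank, and with \cite{ComOtt12:lmla} (maximum rank is $d$) the paper concludes immediately. You instead construct an open set of high-rank forms from scratch: centering at the harmonic form $f_0=\operatorname{Re}\bigl((x+iy)^d\bigr)$, you observe that $(f_0^\perp)_{d-1}=(\partial_x^2+\partial_y^2)\cdot\mathsf{S}^{d-3}$ contains no form with all real roots, use continuity of $(f^\perp)_{d-1}$ in the Grassmannian near $f_0$ (since the relevant catalecticant has locally constant rank $2$), and close it with a compactness estimate on the unit sphere to get an open neighborhood on which $\rank_\RR\ge d$ by Sylvester's apolarity criterion. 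This is correct (the closed cone of binary $(d-1)$-forms with all roots real does indeed meet $(\partial_x^2+\partial_y^2)\mathsf{S}^{d-3}$ only at the origin, and the Hausdorff-distance argument on unit spheres is sound), and it is cleverly arranged so that you never need to know that $d$ is the \emph{maximum} rank --- you only need boundedness of rank plus the interval lemma. The tradeoff is length and delicacy: the paper's appeal to Causa--Re does in one line what your construction does in a paragraph, at the cost of an extra citation. Your route is essentially a self-contained proof of the special consequence of Causa--Re that you need, and would be valuable if one wanted the corollary without that reference.
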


\begin{proof}
The complex generic rank $\lfloor (d+2)/2 \rfloor$ is necessarily the minimum typical rank by \cite{BlekTei14:ma}. It has been shown in \cite{CauRe11:ampa} that $f \in \mathsf{S}^d(\RR^2)$ has real rank $d$ if and only if $f$ has $d$ distinct real roots when regarded as a degree-$d$ homogeneous polynomial in two variables. Since  $d$ is the maximum real rank \cite{ComOtt12:lmla}, and  having $d$ distinct real roots imposes an open condition on $\mathsf{S}^d(\RR^2)$, $d$ is therefore the maximum typical rank. The required result then follows from Lemma~\ref{le:interval}.
\end{proof}

We now introduce a `semialgebraic version' of Terracini's lemma. First observe that for semialgebraic sets $X, Y \subseteq V$, if we define the semialgebraic map $\varphi$ by
\[
  \varphi \colon X \times Y  \to V ,\quad
  (x, y)  \mapsto x + y,
\]
then $\Ima (\varphi)$ is semialgebraic by the Tarski--Seidenberg Theorem.

\begin{lemma}[Semialgebraic Terracini's lemma]\label{lem:semiterracinilem}
Let $X$ and $Y$ be nonempty semialgebraic subsets.  Suppose their Zariski closures $\overline{X}$, $\overline{Y}$ are irreducible real varieties and that $\overline{X}(\CC)$, $\overline{Y}(\CC)$ have real smooth points. Then for  general points $x \in X$ and $y \in Y$, the tangent space of $\varphi(X \times Y)$ at $x+y$ is the span of the tangent spaces $\mathsf{T}_x X$ and $\mathsf{T}_y Y$, i.e.,
\[
\mathsf{T}_{x + y} \varphi(X \times Y) = \spa \{\mathsf{T}_{x} X, \mathsf{T}_{y} Y \}.
\]
\end{lemma}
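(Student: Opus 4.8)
The plan is to reduce the statement to the classical Terracini lemma by passing to complexifications. First I would recall the classical (complex, projective) Terracini lemma: for irreducible varieties $\overline{X}(\CC)$, $\overline{Y}(\CC)$ in a complex vector space, at general smooth points $x_0 \in \overline{X}(\CC)$, $y_0 \in \overline{Y}(\CC)$ the tangent space to the join $\overline{X}(\CC) + \overline{Y}(\CC)$ at $x_0 + y_0$ is $\spa_\CC\{\mathsf{T}_{x_0}\overline{X}(\CC), \mathsf{T}_{y_0}\overline{Y}(\CC)\}$. Here $\varphi(X\times Y)$ is a semialgebraic set whose Zariski closure is exactly the join $\overline{X} + \overline{Y}$ (a real variety), and taking complexification commutes with taking the join, so $\overline{\varphi(X\times Y)}(\CC) = \overline{X}(\CC) + \overline{Y}(\CC)$. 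The dimension of a real semialgebraic set equals the dimension of its Zariski closure, and the complexification of an irreducible real variety with a smooth real point is irreducible of the same dimension with that point smooth; so the real and complex joins have the same dimension, call it $N$.

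Next I would handle the transfer between real and complex smooth points. Since $\overline{X}$, $\overline{Y}$ are irreducible and $\overline{X}(\CC)$, $\overline{Y}(\CC)$ have real smooth points, by the fact quoted in the excerpt (from \cite{BlekTei14:ma, Sottile16:arxiv}) $\overline{X}$ is Zariski dense in $\overline{X}(\CC)$, and similarly for $Y$; hence a general real point of $X$ is a smooth point of $\overline{X}(\CC)$ at which the real tangent space $\mathsf{T}_x X$ (computed inside $\RR^n$) is the set of real points of the complex tangent space $\mathsf{T}_x \overline{X}(\CC)$, i.e. $\dim_\RR \mathsf{T}_x X = \dim_\CC \mathsf{T}_x \overline{X}(\CC) = \dim \overline{X}$. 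The set of $x \in X$ that are smooth on $\overline{X}(\CC)$ is semialgebraic and its complement is lower-dimensional (the singular locus is a proper subvariety), and likewise for the condition that the general-position join formula holds; so for general $(x,y) \in X \times Y$ both $x$ and $y$ are in these good loci. For such $(x,y)$, the classical Terracini lemma gives $\mathsf{T}_{x+y}\overline{\varphi(X\times Y)}(\CC) = \spa_\CC\{\mathsf{T}_x\overline{X}(\CC), \mathsf{T}_y\overline{Y}(\CC)\}$, a complex space of dimension $N$.

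It then remains to descend to the real picture. On one hand, by Proposition~\ref{prop:smooth} (applied to the semialgebraic set $\varphi(X\times Y)$, or rather a Whitney stratum of full dimension through $x+y$ via Theorem~\ref{thm:compat}), for general $(x,y)$ the point $x+y$ is a smooth point of $\varphi(X\times Y)$ and $\mathsf{T}_{x+y}\varphi(X\times Y)$ is a real vector space of dimension $N$; its complexification sits inside $\mathsf{T}_{x+y}\overline{\varphi(X\times Y)}(\CC)$, and by dimension count the inclusion is an equality. On the other hand, $\spa_\RR\{\mathsf{T}_x X, \mathsf{T}_y Y\}$ is a real subspace of $\mathsf{T}_{x+y}\varphi(X\times Y)$ (since $X\subseteq \varphi(X\times Y)$ after translating by a point of $Y$, differentiating curves in $X$ and $Y$ through $x$ and $y$ gives tangent vectors to the image), whose complexification is $\spa_\CC\{\mathsf{T}_x\overline{X}(\CC), \mathsf{T}_y\overline{Y}(\CC)\}$ of complex dimension $N$; hence the real span also has dimension $N$. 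Two real subspaces of $\mathsf{T}_{x+y}\varphi(X\times Y)$, one contained in the other, both of dimension $N = \dim \mathsf{T}_{x+y}\varphi(X\times Y)$, must coincide, giving $\mathsf{T}_{x+y}\varphi(X\times Y) = \spa_\RR\{\mathsf{T}_x X, \mathsf{T}_y Y\}$ as required.

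The main obstacle I anticipate is the bookkeeping around ``general point'' on a semialgebraic (as opposed to algebraic) set: one must be careful that the various good conditions (smoothness of $x$ on $\overline X(\CC)$, smoothness of $y$ on $\overline Y(\CC)$, validity of the classical Terracini formula at $(x,y)$, smoothness of $x+y$ on $\varphi(X\times Y)$, and correctness of the tangent-space dimension there) each fail only on a lower-dimensional semialgebraic subset of $X\times Y$, so that their union is still lower-dimensional and a general $(x,y)$ avoids all of them; this uses Sard's theorem (Lemma~\ref{thm:sard}) for the smoothness of $\varphi$ on a full-dimensional stratum, Proposition~\ref{prop:smooth}, and the fact that the non-general locus for the classical Terracini lemma is a proper subvariety of the complex product, whose real points form a lower-dimensional semialgebraic set. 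The algebraic heart of the argument is entirely standard; the work is in making the real/semialgebraic genericity statements precise.
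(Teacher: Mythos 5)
Your proposal is correct but takes a genuinely different route from the paper. The paper's proof is much shorter and does not invoke the classical (complex) Terracini lemma at all: it notes that $\overline{\varphi(X\times Y)}$ is irreducible with real smooth points (since $\overline X$, $\overline Y$ are), so a general $x+y$ is a smooth point of $\varphi(X\times Y)$; then it computes directly
$\mathsf{T}_{x+y}\varphi(X\times Y) = \varphi_*\bigl(\mathsf{T}_{(x,y)}(X\times Y)\bigr) = \varphi_*(\mathsf{T}_x X\oplus\mathsf{T}_y Y) = \mathsf{T}_x X+\mathsf{T}_y Y$,
using the fact that $\varphi$ is addition (so $\varphi_*(u,v)=u+v$) and that at a general smooth point the differential of a dominant semialgebraic/polynomial map has rank equal to $\dim\Ima\varphi$. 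In essence the paper re-runs the standard proof of Terracini's lemma in the real semialgebraic setting, which is cleaner than your strategy of treating the complex Terracini lemma as a black box, invoking it on $\overline X(\CC)$, $\overline Y(\CC)$, and then descending to $\RR$ by comparing dimensions of real and complexified tangent spaces; your approach is valid but imports more machinery than is needed, and the complexification bookkeeping (Zariski density of $\overline X$ in $\overline X(\CC)$, agreement of real and complex tangent-space dimensions at a real smooth point) is precisely what the direct computation sidesteps.

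One small inaccuracy worth flagging: you appeal to Proposition~\ref{prop:smooth} to conclude that $x+y$ is a smooth point of $\varphi(X\times Y)$ for general $(x,y)$. That proposition concerns differentiability of a semialgebraic \emph{function} away from a lower-dimensional subset, not smoothness of a semialgebraic \emph{set}; it is not the right tool here. What you actually need (and what the paper uses) is the irreducibility argument: $\overline{\varphi(X\times Y)}$ is an irreducible real variety whose complexification has a real smooth point, so its singular locus is a proper subvariety and a general point of $\varphi(X\times Y)$ avoids it. Your later dimension count is unaffected once this is corrected.
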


\begin{proof}
 Since $\overline{X}$ and $\overline{Y}$ are irreducible and have real smooth points, $\overline{\varphi(X \times Y)}$ is irreducible and its complexification $\overline{\varphi(X \times Y)}(\CC)$ has real smooth points. Thus the set of smooth points of $\varphi(X \times Y)$ is open dense in $\varphi(X \times Y)$. Then for a general $(x, y) \in X \times Y$, $\varphi(x, y) = x+y$ is smooth in $\varphi(X \times Y)$. Hence 
\begin{align*}
\mathsf{T}_{x + y} \varphi(X \times Y) = \varphi_*(\mathsf{T}_{(x,y)} X \times Y) &= \varphi_* (\mathsf{T}_x X \oplus \mathsf{T}_y Y) \\
&= \mathsf{T}_x X + \mathsf{T}_y Y = \spa \{ \mathsf{T}_{x} X, \mathsf{T}_{y} Y \}.
\end{align*}
\end{proof}

The following is also immediate from Tarski--Seidenberg Theorem and our earlier work.
\begin{proposition}\label{prop:semialg}
$D_r \coloneqq \{A \in \mathbb{R}_+^{n_1 \times \dots \times n_d}  : \rank_+(A) \le r\}$ is a closed semialgebraic set, i.e., there exists a finite number of polynomials $P_1,\dots,P_m $ with real coefficients that cuts out $D_r$ as a set, i.e.,
\[
D_r = \{ A \in  \mathbb{R}^{n_1 \times \dots \times n_d} : P_1(A) \ge 0, \dots, P_m(A) \ge 0\}.
\]
Furthermore, $C_r \coloneqq \{A \in \mathbb{R}_+^{n_1 \times \dots \times n_d}  : \rank_+(A) = r\}$ is also a semialgebraic set but  not closed in general.  
\end{proposition}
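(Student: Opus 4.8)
The plan is to realize $D_r$ and $C_r$ as images of nice semialgebraic sets under polynomial maps, and then invoke the Tarski--Seidenberg theorem together with the closedness result already established in our earlier work \cite{QComonLim14:arxiv}. First I would recall the polynomial map
\[
\varphi_r \colon \underbrace{X \times \dots \times X}_{r} \to \mathbb{R}^{n_1 \times \dots \times n_d}, \qquad (x_1,\dots,x_r) \mapsto x_1 + \dots + x_r,
\]
where $X$ is the $\mathbb{R}_+$-cone of nonnegative rank-one tensors in $\mathbb{R}_+^{n_1 \times \dots \times n_d}$. The set $X$ is itself semialgebraic: it is the image of the multilinear product map $\mathbb{R}_+^{n_1} \times \dots \times \mathbb{R}_+^{n_d} \to \mathbb{R}_+^{n_1 \times \dots \times n_d}$, $(u^{(1)},\dots,u^{(d)}) \mapsto u^{(1)} \otimes \dots \otimes u^{(d)}$, which is polynomial, and the domain $\mathbb{R}_+^{n_1} \times \dots \times \mathbb{R}_+^{n_d}$ is cut out by the inequalities $X_j \ge 0$; hence $X$ is semialgebraic by Tarski--Seidenberg. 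Consequently $X \times \dots \times X$ is semialgebraic, and $\operatorname{Im}(\varphi_r) = \{ A \in \mathbb{R}_+^{n_1 \times \dots \times n_d} : \rank_+(A) \le r\} = D_r$ is semialgebraic, again by Tarski--Seidenberg, since the projection of a semialgebraic set is semialgebraic. A semialgebraic set is, by definition, a finite union of sets of the form $\{P = 0,\ Q_1 > 0,\dots,Q_m>0\}$; to obtain the stated description of $D_r$ purely by inequalities $P_i \ge 0$, I would invoke the closedness of $D_r$ and use the standard fact that a closed semialgebraic set can be written as a finite union of basic closed semialgebraic sets $\{ Q_1 \ge 0,\dots,Q_k\ge 0\}$, and then combine such a finite union into a single system of inequalities $P_i \ge 0$ by taking appropriate products (a union of two basic closed sets $\{f_i \ge 0\}$ and $\{g_j \ge 0\}$ is $\{ f_i g_j \ge 0 \text{ for all } i,j\}$ together with $\{f_i \ge 0\}\cup\{g_j\ge 0\}$ — more carefully, one uses that the union is cut out by the products $f_i g_j \ge 0$ jointly with the conditions expressing membership in at least one piece, which can itself be rewritten with $\max$, a semialgebraic but not polynomial operation; the cleanest route is simply to cite the representation of closed semialgebraic sets). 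The closedness of $D_r$ is precisely the content of our earlier result \cite{QComonLim14:arxiv}, which establishes that a nonnegative tensor of bounded nonnegative rank has no border-rank pathology: the set is topologically closed.

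For the second assertion, $C_r = D_r \setminus D_{r-1}$ by definition of nonnegative rank, and the difference of two semialgebraic sets is semialgebraic (this is again immediate from the Boolean-algebra structure of semialgebraic sets, a consequence of Tarski--Seidenberg). That $C_r$ is not closed in general can be seen by exhibiting a sequence of nonnegative tensors of nonnegative rank exactly $r$ converging to a tensor of strictly smaller nonnegative rank; the standard example is a rank-one degeneration, e.g. in the matrix case $\begin{bsmallmatrix}1 & 0\\ 0 & \epsilon\end{bsmallmatrix} \to \begin{bsmallmatrix}1 & 0\\ 0 & 0\end{bsmallmatrix}$ as $\epsilon \to 0^+$, where the limit drops rank; an analogous construction works for tensors of any order and any $r \ge 2$, so I would include a one-line remark to this effect rather than a separate lemma.

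I do not anticipate a genuine obstacle here — the statement is, as the text says, ``immediate from Tarski--Seidenberg and our earlier work.'' The only point requiring slight care is the passage from the definitional form of a semialgebraic set (a finite union involving both equalities and strict inequalities) to the clean conjunctive form $\{P_1 \ge 0,\dots,P_m \ge 0\}$ claimed in the proposition; this passage is legitimate precisely because $D_r$ is \emph{closed}, and I would be explicit that this is where the result from \cite{QComonLim14:arxiv} is used, since without closedness no such representation by non-strict inequalities is possible in general.
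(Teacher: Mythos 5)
Your approach is essentially the paper's: realize $D_r$ as the image of a semialgebraic set under a polynomial map, invoke Tarski--Seidenberg for semialgebraicity, import closedness from prior work, and observe $C_r = D_r \setminus D_{r-1}$ is a semialgebraic difference. Two minor remarks. First, the paper attributes closedness of $D_r$ to \cite[Proposition~6.2]{LimC09:jchemo}, not to \cite{QComonLim14:arxiv}; you should cite the former. Second, your hesitation about converting ``closed semialgebraic'' into the single conjunctive form $\{P_1 \ge 0,\dots,P_m \ge 0\}$ is well-founded and not actually resolved by what you cite: the Finiteness Theorem produces a finite \emph{union} of basic closed sets, and a closed semialgebraic set need not be basic closed, so the displayed equation in the proposition asserts more than either your argument or the paper's one-line proof establishes --- this is an imprecision in the proposition's phrasing rather than a defect unique to your write-up. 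Your degeneration example for the non-closedness of $C_r$ is correct and is a useful addition the paper omits.
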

\begin{proof}
By the Tarski--Seidenberg Theorem \cite{BCR98}, $D_r$ is a semialgebraic set and thus so is $C_r = D_r \setminus D_{r-1}$.
By \cite[Proposition~6.2]{LimC09:jchemo}, $D_r$ is closed. 
\end{proof}

\section{Direct sum conjecture for nonnegative rank}

We now show that the \emph{direct sum conjecture} is true for nonnegative rank.
Given vector spaces $V_1, \dots, V_d$, and $W_1, \dots, W_d$ over $\KK$, for any $A \in V_1 \otimes \cdots \otimes V_d$ and $B \in W_1 \otimes \cdots \otimes W_d$,  we have the direct sum $A \oplus B \in (V_1 \oplus W_1) \otimes \cdots \otimes (V_d \oplus W_d)$. For $d = 2$, it is obvious that the rank of a block diagonal matrix is the sum of the ranks of the diagonal blocks, i.e., if $A$ and $B$ are matrices, then
\[
\rank(A \oplus B) = \rank\left( \begin{bmatrix}A  & 0\\ 0 & B\end{bmatrix}\right) = \rank(A) + \rank(B).
\]
It has been conjectured by Strassen \cite{Strassen73:jfdruam} that the same is true for $d > 2$, i.e., $\rank(A \oplus B) = \rank(A) + \rank(B)$ for any $d$-tensors. This has been a long-standing open problem in algebraic computational complexity. We show here that the analogous statement  for nonnegative rank is true. The next two results are true for nonnegative tensors of arbitrary order $d$ but we will state and prove them for $d = 3$ for notational simplicity.

In the following, let $U_1$, $V_1$, $W_1$, $U_2$, $V_2$,$W_2$ be real vector spaces of dimensions $m_1$, $n_1$, $p_1$, $m_2$, $n_2$, $p_2$ respectively. Fix a basis for each vector space and choose the bases for $U_1 \oplus U_2$, $V_1 \oplus V_2$, and $W_1 \oplus W_2$ so that for $a = (a_1, \dots, a_{m_1}) \in U_1$ and $b = (b_1, \dots, b_{m_2}) \in U_2$, $a \oplus b$ has coordinates $a \oplus b = (a_1, \dots, a_{m_1}, b_1, \dots, b_{m_2})$ in $U_1 \oplus U_2$; likewise for $V_1 \oplus V_2$ and $W_1 \oplus W_2$.
\begin{lemma}[Nonnegative direct sum conjecture]\label{lem:directsum}
For $A \in U_1^+ \otimes V_1^+ \otimes W_1^+$ and $B \in U_2^+ \otimes V_2^+ \otimes W_2^+$,
\[
\rank_+(A\oplus B) = \rank_+(A) + \rank_+(B).
\]
\end{lemma}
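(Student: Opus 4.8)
The plan is to prove the two inequalities $\rank_+(A \oplus B) \le \rank_+(A) + \rank_+(B)$ and $\rank_+(A \oplus B) \ge \rank_+(A) + \rank_+(B)$ separately. The first is immediate: given nonnegative rank-one decompositions $A = \sum_{i=1}^s u_i \otimes v_i \otimes w_i$ and $B = \sum_{j=1}^t x_j \otimes y_j \otimes z_j$ with $s = \rank_+(A)$, $t = \rank_+(B)$, padding the vectors with zeros gives $A \oplus B = \sum_i (u_i \oplus 0) \otimes (v_i \oplus 0) \otimes (w_i \oplus 0) + \sum_j (0 \oplus x_j) \otimes (0 \oplus y_j) \otimes (0 \oplus z_j)$, a nonnegative $(s+t)$-term decomposition, since all padded vectors remain entrywise nonnegative. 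Hence $\rank_+(A \oplus B) \le s + t$.

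\textbf{The main work is the reverse inequality.} Suppose $A \oplus B = \sum_{k=1}^r a_k \otimes b_k \otimes c_k$ is a nonnegative rank-$r$ decomposition, so each $a_k \in (U_1 \oplus U_2)^+$, and similarly for $b_k$, $c_k$. Write $a_k = a_k' \oplus a_k''$ with $a_k' \in U_1^+$ the first $m_1$ coordinates and $a_k'' \in U_2^+$ the last $m_2$ coordinates, and similarly $b_k = b_k' \oplus b_k''$, $c_k = c_k' \oplus c_k''$. The key structural observation — and here is where nonnegativity does the work that is unavailable over $\RR$ or $\CC$ — is that the ``off-block'' parts of $A \oplus B$ vanish: for instance the $U_1 \otimes V_1 \otimes W_2$ component of $A \oplus B$ is zero, which forces $\sum_k a_k' \otimes b_k' \otimes c_k'' = 0$. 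Since every summand $a_k' \otimes b_k' \otimes c_k''$ is a nonnegative tensor, a sum of nonnegative tensors being zero forces each summand to be zero; thus for each $k$, at least one of $a_k', b_k', c_k''$ is zero. Running this over all six mixed blocks, I would argue that each index $k \in \{1, \dots, r\}$ falls into one of two types: a ``type-$A$'' index for which $a_k'' \otimes b_k'' \otimes c_k'' = 0$ (equivalently the rank-one term contributes nothing to the $B$-block) and a ``type-$B$'' index for which $a_k' \otimes b_k' \otimes c_k' = 0$. Concretely: if $a_k'' \ne 0$, then from the $U_2 \otimes V_1 \otimes W_1$ block vanishing we get $b_k' \otimes c_k' = 0$ or $a_k'' = 0$, so $b_k' = 0$ or $c_k' = 0$, meaning the $A$-block contribution $a_k' \otimes b_k' \otimes c_k'$ is zero; similar bookkeeping handles the $V$- and $W$-directions. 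Let $S_A$ be the set of $k$ whose $B$-block contribution vanishes and $S_B$ the rest (whose $A$-block contribution vanishes); these cover $\{1,\dots,r\}$, possibly with overlap, and $|S_A| + |S_B| \ge r$ after removing the overlap we may assume disjointness with $|S_A| + |S_B| = r$. Then $A = \sum_{k \in S_A} a_k' \otimes b_k' \otimes c_k'$ is a nonnegative $|S_A|$-term decomposition and $B = \sum_{k \in S_B} a_k'' \otimes b_k'' \otimes c_k''$ is a nonnegative $|S_B|$-term decomposition, giving $\rank_+(A) + \rank_+(B) \le |S_A| + |S_B| = r$.

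\textbf{The delicate step} is the case analysis showing every index is either type-$A$ or type-$B$, i.e., that no rank-one term can contribute nontrivially to both blocks. The point to get right is that a single rank-one nonnegative term $a_k \otimes b_k \otimes c_k$ has nonzero $A$-block part iff $a_k', b_k', c_k'$ are all nonzero, and nonzero $B$-block part iff $a_k'', b_k'', c_k''$ are all nonzero; if both held simultaneously, then (say) $a_k' \ne 0$ and $c_k'' \ne 0$, forcing the $U_1 \otimes V_? \otimes W_2$ mixed block to receive the nonzero contribution $a_k' \otimes b_k' \otimes c_k'' \ne 0$ (using $b_k'$ or $b_k''$ nonzero as appropriate, and noting one of them is nonzero since $b_k \ne 0$), contradicting that the mixed block of $A \oplus B$ is zero and all contributions to it are nonnegative hence cannot cancel. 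I would state this cancellation-free principle once as a small observation — a finite sum of nonnegative tensors equals zero only if each is zero — and then the rest is careful but routine index bookkeeping. I expect no real obstacle beyond organizing the six mixed blocks cleanly; the essential content is entirely the nonnegativity-forces-no-cancellation phenomenon.
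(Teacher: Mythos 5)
Your proof is correct and takes essentially the same approach as the paper: both exploit that the six mixed blocks of $A \oplus B$ are zero and that nonnegativity forbids cancellation, so no rank-one summand can contribute to both the $A$-block and the $B$-block, which forces any nonnegative decomposition of $A \oplus B$ to split into separate decompositions of $A$ and of $B$. The paper phrases this as a proof by contradiction (if $r < \rank_+(A)+\rank_+(B)$, some summand must be mixed and would produce a positive entry in a zero block), while you argue directly via the type-$A$/type-$B$ partition, but the underlying observation and the resulting split are identical.
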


\begin{proof}
Fix a basis for each vector space and let $a_{ijk}$ and $b_{i'j'k'}$ denote the coordinates of $A$ and $B$. Note that $(A \oplus B)_{ijk} = a_{ijk}$, $(A \oplus B)_{i'j'k'} = b_{i'j'k'}$ and other terms are zero. Suppose that $r\coloneqq \rank_+(A \oplus B) < \rank_+(A) + \rank_+(B)$. Let $A\oplus B = \sum_{i=1}^r u_i \otimes v_i \otimes w_i$. Then at least one of the summands $u_i \otimes v_i \otimes w_i$ is neither in $U_1^+ \otimes V_1^+ \otimes W_1^+$ nor in $U_2^+ \otimes V_2^+ \otimes W_2^+$. So without loss of generality we may assume that $u_1 \in (U_1 \oplus U_2)^+ \setminus (U_1^+ \oplus \{0\} \cup\{0\} \oplus U_2^+)$. Thus at least one of the following indices
\[
(i,j',k),\; (i, j, k'), \; (i,j',k'), \; (i', j, k'), \;  (i', j', k), \; (i', j, k),
\]
which we denote by $(\alpha, \beta, \gamma)$, will be such that $(A \oplus B)_{\alpha \beta \gamma}$ is positive, a contradiction.
\end{proof}
We may also deduce the following, clearly also true for $d > 3$, from the above proof.
\begin{corollary}
If $A$ and $B$ have unique nonnegative rank decompositions in $U_1^+ \otimes V_1^+ \otimes W_1^+$ and $U_2^+ \otimes V_2^+ \otimes W_2^+$ respectively, then $A\oplus B$ also has a unique nonnegative rank decomposition.
\end{corollary}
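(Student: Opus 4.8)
The plan is to reduce uniqueness for $A \oplus B$ to the assumed uniqueness for $A$ and for $B$ by showing that \emph{every} nonnegative rank decomposition of $A\oplus B$ splits cleanly along the two blocks. First I would invoke Lemma~\ref{lem:directsum} to fix $r \coloneqq \rank_+(A\oplus B) = r_A + r_B$, where $r_A \coloneqq \rank_+(A)$ and $r_B \coloneqq \rank_+(B)$, and then take an arbitrary nonnegative rank-$r$ decomposition $A\oplus B = \sum_{i=1}^r u_i\otimes v_i\otimes w_i$ with $u_i\in(U_1\oplus U_2)^+$, $v_i\in(V_1\oplus V_2)^+$, $w_i\in(W_1\oplus W_2)^+$.

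The key step is to argue that each summand $u_i\otimes v_i\otimes w_i$ lies entirely in $U_1^+\otimes V_1^+\otimes W_1^+$ or entirely in $U_2^+\otimes V_2^+\otimes W_2^+$. Writing $u_i = u_i'\oplus u_i''$, $v_i = v_i'\oplus v_i''$, $w_i = w_i'\oplus w_i''$ with primed (resp.\ double-primed) parts in the first (resp.\ second) block --- possible since $(U_1\oplus U_2)^+ = U_1^+\oplus U_2^+$ and likewise for the other two factors --- the tensor $u_i\otimes v_i\otimes w_i$ expands into the eight blocks indexed by $\{1,2\}^3$. Since the six cross blocks of $A\oplus B$ vanish and all $r$ summands are nonnegative, there is no cancellation, so each summand must vanish on all six cross blocks. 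Repeating the case analysis in the proof of Lemma~\ref{lem:directsum} --- if, say, $u_i'\ne 0$ and $u_i''\ne 0$, then vanishing of the blocks $(2,1,1)$ and $(1,2,2)$ forces, after splitting into subcases, a nonzero entry in one of the blocks $(2,2,1)$ or $(1,1,2)$, a contradiction --- shows that each of $u_i, v_i, w_i$ is ``pure,'' i.e.\ lies in the first block or in the second; the vanishing of the remaining cross blocks then forces all three of $u_i, v_i, w_i$ to lie in the \emph{same} block. Hence the nonzero summands partition into those contained in $U_1^+\otimes V_1^+\otimes W_1^+$, which sum to $A$, and those contained in $U_2^+\otimes V_2^+\otimes W_2^+$, which sum to $B$.

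The first group has at least $r_A$ terms and the second at least $r_B$, while together they number $r = r_A + r_B$; hence the first group is a nonnegative rank-$r_A$ decomposition of $A$ and the second a nonnegative rank-$r_B$ decomposition of $B$. By hypothesis these are unique as unordered sets of rank-one tensors, and since a nonzero rank-one tensor supported in the first block is never supported in the second, the concatenation of the two sets is an unordered set completely determined by $A$ and $B$. This is exactly the essential uniqueness of the nonnegative rank decomposition of $A\oplus B$ in the sense of \eqref{eq:rrd}.

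I expect the main obstacle to be the bookkeeping in the key step: carefully ruling out every type of mixed rank-one summand. This is morally the same computation as in the proof of Lemma~\ref{lem:directsum}, but there one only needed a single contradictory positive entry somewhere, whereas here one must show that each summand is forced to be entirely block-diagonal --- which means treating the genuinely mixed cases in all three tensor factors (by the symmetry of the roles of $U$, $V$, $W$ it suffices to do one) and then also excluding ``split'' summands such as $u_i\in U_1^+$, $v_i\in V_1^+$, $w_i\in W_2^+$. A minor additional point to verify is that ``unique decomposition'' is taken in the unordered sense, so that gluing the two block decompositions back together yields a genuinely unique decomposition.
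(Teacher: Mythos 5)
Your proof is correct and takes essentially the same approach as the paper, which simply asserts the corollary follows ``from the above proof'' of Lemma~\ref{lem:directsum}; you have filled in the details. The key observation---that nonnegativity precludes cancellation, so a ``mixed'' or ``split'' summand would force a positive entry in a cross block where $A \oplus B$ vanishes---is exactly the mechanism of that lemma's proof, extended to show every summand of any nonnegative rank-$r$ decomposition lies wholly in one block.
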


For a real tensor $A \in \mathbb{R}^{m_1 \times \dots \times m_d} \subseteq \mathbb{R}^{n_1 \times \dots \times n_d} $, the real rank of $A$ regarded as a tensor in $\mathbb{R}^{m_1 \times \dots \times m_d} $ equals the real rank of $A$ regarded as a tensor in $\mathbb{R}^{n_1 \times \dots \times n_d} $  \cite[Proposition~3.1]{DesiL08:simax}. As a corollary of Lemma~\ref{lem:directsum}, we see that this also holds for nonnegative rank. 

In the following, let $U_1 \subseteq U_2$, $V_1 \subseteq V_2$, and $W_1 \subseteq W_2$ be inclusions of real vector spaces. Choose bases for $U_2$, $V_2$, and $W_2$ such that $u \in U_1$ has coordinates $u = (u_1, \dots, u_{m_1}, 0, \dots, 0)$ as a vector in $U_2$; likewise for $V_2$ and $W_2$.  Then we have the following corollary, which is stated for $d=3$, but can be easily generalized to arbitrary $d > 3$.
\begin{corollary}\label{cor:rksrkl}
Let $A \in U_1^+ \otimes V_1^+ \otimes W_1^+ \subseteq U_2^+ \otimes V_2^+ \otimes W_2^+$. Then the nonnegative rank of $A$ regarded as a nonnegative tensor in $U_1^+ \otimes V_1^+ \otimes W_1^+$ is the same as the nonnegative rank of $A$ regarded as a nonnegative tensor in $U_2^+ \otimes V_2^+ \otimes W_2^+$.
\end{corollary}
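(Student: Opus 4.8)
The plan is to realize $A$, viewed inside the larger space, as a direct sum $A \oplus 0$ and then quote Lemma~\ref{lem:directsum}. First I would introduce complementary subspaces compatible with the chosen bases: write $U_2 = U_1 \oplus U_1'$, where $U_1'$ is the span of those basis vectors of $U_2$ that do not lie in $U_1$, and likewise $V_2 = V_1 \oplus V_1'$ and $W_2 = W_1 \oplus W_1'$. With the coordinate conventions fixed in the statement (where $u \in U_1$ has coordinates $(u_1, \dots, u_{m_1}, 0, \dots, 0)$ in $U_2$, and similarly for $V_2$, $W_2$), the inclusion $U_1^+ \otimes V_1^+ \otimes W_1^+ \hookrightarrow U_2^+ \otimes V_2^+ \otimes W_2^+$ is exactly the map $A \mapsto A \oplus 0$, where $0$ is the zero tensor of $(U_1')^+ \otimes (V_1')^+ \otimes (W_1')^+$ and the direct sum is taken with respect to the decompositions $U_2 = U_1 \oplus U_1'$, etc. The only thing to verify here is that the tensor whose coordinates are $a_{ijk}$ for $i \le m_1$, $j \le n_1$, $k \le p_1$ and $0$ elsewhere is the same whether obtained via the inclusion or via the direct-sum construction, which is immediate from the coordinate description of $A \oplus B$ recorded before Lemma~\ref{lem:directsum}.

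Then I would apply Lemma~\ref{lem:directsum} with its two summands taken to be $A$ and $0$. Since $\rank_+(0) = 0$ by the convention that the zero tensor has rank $0$, the lemma gives
\[
\rank_+(A \oplus 0) = \rank_+(A) + \rank_+(0) = \rank_+(A),
\]
where the left-hand side is the nonnegative rank of $A$ regarded in $U_2^+ \otimes V_2^+ \otimes W_2^+$ and the right-hand side is its nonnegative rank in $U_1^+ \otimes V_1^+ \otimes W_1^+$. This is exactly the asserted equality, and the same argument works verbatim for any order $d > 3$.

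I do not expect a genuine obstacle here beyond bookkeeping. The inequality ``$\le$'' is trivial, since any nonnegative rank-$r$ decomposition of $A$ in $U_1^+ \otimes V_1^+ \otimes W_1^+$ is also one in $U_2^+ \otimes V_2^+ \otimes W_2^+$. The substantive inequality ``$\ge$'' — that enlarging the ambient space cannot lower the nonnegative rank — is precisely what Lemma~\ref{lem:directsum} delivers once one summand is zero; its proof shows that every rank-one term of a nonnegative decomposition of $A \oplus 0$ must already lie in $U_1^+ \otimes V_1^+ \otimes W_1^+$, because otherwise some coordinate of $A \oplus 0$ outside the $U_1 \otimes V_1 \otimes W_1$ block would be forced to be strictly positive. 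The only mild care needed is to align the coordinate conventions so that the inclusion in the corollary genuinely coincides with the direct-sum embedding used in the lemma.
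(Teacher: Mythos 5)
Your proposal is correct and follows essentially the same route as the paper: both realize the inclusion as the direct-sum embedding $A \mapsto A \oplus 0$ with $0 \in (U_1')^+ \otimes (V_1')^+ \otimes (W_1')^+$, where $U_1', V_1', W_1'$ are the complementary coordinate subspaces, and then invoke Lemma~\ref{lem:directsum} with $B = 0$ and $\rank_+(0) = 0$. The extra remarks you make about checking that the coordinate inclusion agrees with the direct-sum construction, and about the trivial direction $\le$, are sound and merely spell out what the paper leaves implicit.
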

\begin{proof}
Let $U'_1 \subseteq U_2$ be a complementary subspace of $U_1$, i.e., $U_2 = U_1 \oplus U'_1$. So $u' \in U'_1$ has coordinates $u' = (0, \dots, 0, u'_{m_1+1}, \dots, u'_{m_2})$ as a vector in $U_2$. Likewise, we let $V'_1 \subseteq V_2$ and $W'_1 \subseteq W_2$ be complementary subspaces of $V_1$ and $W_1$. The required statement then follows from applying Lemma~\ref{lem:directsum} to the case $A \in U_1^+ \otimes V_1^+ \otimes W_1^+$ and $B \coloneqq 0 \in U^{\prime +}_1 \otimes V^{\prime +}_1 \otimes W^{\prime +}_1$.
\end{proof}
The following simple observation is a nonnegative analogue of \cite[Corollary~3.3]{DesiL08:simax}. We assume that we fix a basis for each $V_i$ so that $V_i^+$ is defined, $i = 1, \dots, d$.
\begin{proposition}
For any $k \in \{2,\dots,d-1\}$, let $A \in V_1^+ \otimes \cdots \otimes V_k^+$ be arbitrary  and let $u_{k+1} \in V_{k+1}^+, \dots, u_d \in V_d^+$ be nonzero. Then
\[
\rank_+ (A) = \rank_+ (A \otimes u_{k+1} \otimes \cdots \otimes u_d).
\]
\end{proposition}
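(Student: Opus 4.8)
The plan is to prove the two inequalities $\rank_+(A \otimes u_{k+1} \otimes \cdots \otimes u_d) \le \rank_+(A)$ and $\rank_+(A) \le \rank_+(A \otimes u_{k+1} \otimes \cdots \otimes u_d)$ separately. The first direction is immediate: if $A = \sum_{i=1}^s x^{(1)}_i \otimes \cdots \otimes x^{(k)}_i$ is a nonnegative rank-$s$ decomposition with $s = \rank_+(A)$, then $A \otimes u_{k+1} \otimes \cdots \otimes u_d = \sum_{i=1}^s x^{(1)}_i \otimes \cdots \otimes x^{(k)}_i \otimes u_{k+1} \otimes \cdots \otimes u_d$ is an $s$-term nonnegative decomposition (each factor is nonnegative since the $u_j$ are), so the rank can only drop.

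For the reverse direction, I would start from a nonnegative rank-$r$ decomposition $A \otimes u_{k+1} \otimes \cdots \otimes u_d = \sum_{i=1}^r y^{(1)}_i \otimes \cdots \otimes y^{(d)}_i$ with $r = \rank_+(A \otimes u_{k+1} \otimes \cdots \otimes u_d)$ and all $y^{(j)}_i \in V_j^+$. The key observation is that $u_{k+1}, \dots, u_d$ are fixed nonzero nonnegative vectors, so each $u_j$ has at least one strictly positive coordinate, say the $\ell_j$-th. Restricting the identity to the slice obtained by fixing the last $d-k$ indices to $(\ell_{k+1}, \dots, \ell_d)$ gives
\[
(u_{k+1})_{\ell_{k+1}} \cdots (u_d)_{\ell_d} \, A = \sum_{i=1}^r (y^{(k+1)}_i)_{\ell_{k+1}} \cdots (y^{(d)}_i)_{\ell_d}\, y^{(1)}_i \otimes \cdots \otimes y^{(k)}_i.
\]
Since each scalar $(u_{k+1})_{\ell_{k+1}} \cdots (u_d)_{\ell_d}$ is strictly positive and each coefficient $(y^{(k+1)}_i)_{\ell_{k+1}} \cdots (y^{(d)}_i)_{\ell_d}$ is nonnegative, dividing through expresses $A$ as a nonnegative combination of at most $r$ nonnegative rank-one tensors in $V_1^+ \otimes \cdots \otimes V_k^+$; absorbing the nonnegative scalars into one of the factors yields an $r$-term nonnegative decomposition of $A$, hence $\rank_+(A) \le r$. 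Combining the two inequalities gives equality.

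I do not expect a serious obstacle here — this is essentially the same slicing argument used implicitly in Corollary~\ref{cor:rksrkl} and in \cite[Corollary~3.3]{DesiL08:simax}. The only point requiring a little care is to make sure the coefficients stay nonnegative throughout, which is exactly why we slice at coordinates where the $u_j$ are strictly positive rather than at arbitrary coordinates; this is what keeps the argument within the nonnegative setting and prevents the cancellation phenomena that complicate nonnegative rank in general. One could alternatively phrase the reverse inequality as an application of Corollary~\ref{cor:rksrkl} combined with the observation that $\rank_+(A \otimes u_{k+1} \otimes \cdots \otimes u_d) = \rank_+(A \otimes e_{k+1} \otimes \cdots \otimes e_d)$ after a monoid change of basis sending each $u_j$ to a standard basis vector, but the direct slicing argument is cleaner and self-contained.
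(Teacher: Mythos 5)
Your proof is correct, and both inequalities are established cleanly. The paper's own proof is a one-liner: it observes that $A \mapsto A \otimes u_{k+1} \otimes \cdots \otimes u_d$ is an isomorphism of $\RR_+$-cones onto $V_1^+ \otimes \dots \otimes V_k^+ \otimes \operatorname{span}_{\RR_+}(u_{k+1}) \otimes \dots \otimes \operatorname{span}_{\RR_+}(u_{d})$, and declares the equality to follow. What that terse argument glosses over is precisely the point your slicing step handles: the isomorphism preserves rank computed with respect to rank-one tensors \emph{inside} that sub-cone, but $\rank_+(A \otimes u_{k+1} \otimes \cdots \otimes u_d)$ is a priori taken over all nonnegative rank-one tensors of $V_1^+ \otimes \cdots \otimes V_d^+$, whose factors in slots $k+1,\dots,d$ need not be multiples of the $u_j$. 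Your argument of restricting to a slice where each $u_j$ is strictly positive closes this gap explicitly and keeps all coefficients nonnegative, which is where a naive real-rank-style argument would go astray. So you are taking the same basic idea but making it a complete, self-contained proof rather than relying on the isomorphism to do unspoken work.

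One small caution about your closing aside: the suggested alternative of a ``monoid change of basis sending each $u_j$ to a standard basis vector'' does not actually work in general. An element $g$ of the monoid has a nonnegative matrix, and if $u_j$ has two or more positive coordinates then $g(u_j)=e_1$ forces the corresponding columns of $g$ to be parallel, so $g$ cannot be invertible. The only invertible maps preserving the nonnegative orthant in both directions are scaled permutation matrices, which cannot collapse a vector with several positive entries to a coordinate axis. Since you do not rely on this remark, it does not affect the correctness of the proof, but it should be dropped or reformulated (e.g., one can instead just invoke Corollary~\ref{cor:rksrkl} after appending $u_j$ as a new coordinate direction via the direct-sum trick, or simply keep the slicing argument as you did).
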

\begin{proof}
The isomorphism of $\RR_+$-cones,
\[
V_1^+ \otimes \dots \otimes V_k^+ \cong V_1^+ \otimes \dots \otimes V_k^+ \otimes \operatorname{span}_{\RR_+}(u_{k+1}) \otimes \dots \otimes \operatorname{span}_{\RR_+}(u_{d}),
\]
given by $A \mapsto A \otimes u_{k+1} \otimes \cdots \otimes u_{d}$ implies the required equality.
\end{proof}

\section{General equivalence of complex, real, and nonnegative ranks}\label{sec:nonnegative}

It is well-known that a real tensor may have different real and complex ranks. Likewise a nonnegative tensor may also have different nonnegative and real ranks. 
In fact, strict inequality can also occur for the nonnegative and real ranks of a nonnegative matrix, a well-known example was provided by H.~Robbins \cite{Como14:spmag}.

For the case of $3$-tensors, two explicit examples are as follows. Let $e_1,e_2\in\mathbb{R}^2$ be the standard basis vectors, i.e., $e_1 =[1,0]^\mathsf{T}$, $e_2=[0,1]^\mathsf{T}$. Let
\begin{align}
A &= e_1 \otimes e_1 \otimes e_1 + e_2 \otimes e_2 \otimes e_1 + e_1 \otimes e_2 \otimes e_2 + e_2 \otimes e_1 \otimes e_2 , \label{eq:tdecomp0}\\
B &= e_1\otimes e_1\otimes e_1-e_1\otimes e_2 \otimes e_2+e_2\otimes e_1\otimes e_2+e_2 \otimes e_2\otimes e_1. \notag
\end{align}
Then $A \in \mathbb{R}_+^{2 \times 2 \times 2} \subseteq \mathbb{R}^{2 \times 2 \times 2} $ and $B \in\mathbb{R}^{2 \times 2 \times 2}  \subseteq \mathbb{C}^{2 \times 2 \times 2} $. We have
\begin{align*}
\operatorname{rank}_{\mathbb{C}}(A) = \operatorname{rank}_{\mathbb{R}}(A) = 2&<  4 =\operatorname{rank}_{+}(A),\\
\operatorname{rank}_{\mathbb{C}}(B) =2 &< 3 = \operatorname{rank}_{\mathbb{R}}(B). 
\end{align*}
See Section~\ref{sec:typmax} for the nonnegative, real, and complex ranks of $A$ and \cite{DesiL08:simax} for the real and complex ranks of $B$. We will show in this section that this does not happen for a general nonnegative tensor of nonnegative rank strictly less than the complex generic rank --- its nonnegative, real, and complex ranks will all be equal.

For notational simplicity we focus on $3$-tensors, although many of the  statements and proofs  in this section can be generalized without difficulty to $d$-tensors for any $d > 3$. Let $U$, $V$ and $W$ be real vector spaces of dimensions $n_U$, $n_V$ and $n_W$ respectively. Denote by $V_{\CC}$ the complexification of $V$, i.e., $V_{\CC} = V \otimes_{\RR} \CC$.

We define the polynomial map
\begin{equation}\label{eq:sigma}
\begin{aligned}
  \Sigma^{\CC}_r \colon (U_{\CC} \times V_{\CC} \times W_{\CC})^{r} &\to U_{\CC} \otimes V_{\CC} \otimes W_{\CC}, \\
  (u_1, v_1, w_1, \dots, u_r, v_r, w_r) &\mapsto \sum_{i=1}^r u_i \otimes v_i \otimes w_i,
\end{aligned}
\end{equation}
and denote the restriction of $\Sigma^{\CC}_r$ to $(U \times V \times W)^{r}$ by $\Sigma^{\RR}_r$, and the restriction to $(U_+ \times V_+ \times W_+)^{r}$ by $\Sigma^{\RR_+}_r$. We have the following commutative diagram:
\begin{equation}\label{eq:sigma2}
\begin{tikzcd}
(U_+ \times V_+ \times W_+)^{r} \arrow{r}{\Sigma^{\RR_+}_r} \arrow[hookrightarrow,swap]{d} & U_+ \otimes V_+ \otimes W_+ \arrow[hookrightarrow,swap]{d} \\
(U \times V \times W)^{r} \arrow{r}{\Sigma^{\RR}_r} \arrow[hookrightarrow,swap]{d} & U \otimes V \otimes W \arrow[hookrightarrow,swap]{d} \\
(U_{\CC} \times V_{\CC} \times W_{\CC})^{r} \arrow{r}{\Sigma^{\CC}_r} & U_{\CC} \otimes V_{\CC} \otimes W_{\CC}.
\end{tikzcd}
\end{equation}

Henceforth, we will use the following abbreviated notation when specifying an element of $(U \times V \times W)^{r}$,
\begin{equation}\label{eq:abbre}
(u_1, \dots, w_r) \coloneqq   (u_1, v_1, w_1, \dots, u_r, v_r, w_r).
\end{equation}
Then we have
\[
\Ima{\Sigma^{\RR_+}_r} = D_r \coloneqq \{A \in U_+ \otimes V_+ \otimes W_+ : \rank_+(A) \le r\}.
\]
The notation is consistent with  Proposition~\ref{prop:semialg}, which also implies that $\Ima{\Sigma^{\RR_+}_r}$ is closed. Note that  $\Ima{\Sigma^{\RR}_r}$ and $\Ima{\Sigma^{\CC}_r}$ are usually not closed.

As in Definition~\ref{def:typical}, if  $r_g$ is the complex generic rank of $U_{\CC} \otimes V_{\CC} \otimes W_{\CC}$, then the set of rank-$r_g$ tensors contains a Zariski open subset. Put in another way, the complex generic rank is the minimum $r$ such that the morphism $\Sigma^{\CC}_r$ is dominant. As we mentioned earlier, the result \cite[Theorem~2]{BlekTei14:ma} shows that the complex generic rank equals the minimum real typical rank.

The expected dimension of $\Ima{\Sigma^{\RR}_r}$ is $\min\{ r(n_U + n_V + n_W - 2), n_U n_V n_W \}$ and thus the expected complex generic rank is
\[
\biggl\lceil\frac{n_U n_V n_W}{n_U + n_V + n_W - 2}\biggr\rceil,
\]
which is at least $r_g$.

\begin{definition}\label{defi:rdef}
If $\dim (\Ima{\Sigma^{\RR}_r}) < \min\{ r(n_U + n_V + n_W - 2), n_U n_V n_W \}$, then $U \otimes V \otimes W$ is called \emph{$r$-defective} over $\RR$.
\end{definition}

The definition of defectivity over $\mathbb{C}$, i.e., identical to Definition~\ref{defi:rdef} but with $U,V,W$ being complex vector spaces, is classical in algebraic geometry  \cite{Zak93}. More generally, a complex projective variety $X$ is called $r$-defective \cite{ChiaC02:tams} if the $r$th secant variety of $X$ does not have the expected dimension. In our context this is equivalent to $\dim_{\CC} (\Ima{\Sigma^{\CC}_r}) < \min\{ r(n_U + n_V + n_W - 2), n_U n_V n_W \}$.  Note that if $U \otimes V \otimes W$ is $r$-identifiable, then $U \otimes V \otimes W$ is not $r$-defective.

\begin{lemma}\label{lem:nonnegreal}
Let $r < r_g$. Then a general $A \in D_r$ has real rank $r$.
\end{lemma}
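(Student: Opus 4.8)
The plan is to show that the ``bad set''
\[
\mathcal{B}\coloneqq\{A\in D_r:\rank_{\RR}(A)<r\}
\]
is a semialgebraic subset of $D_r$ whose dimension is strictly smaller than $\dim D_r$. Since every $A\in D_r$ satisfies $\rank_{\RR}(A)\le\rank_{+}(A)\le r$ (a nonnegative rank‑one decomposition is in particular a real one), this is exactly the assertion that a general $A\in D_r$ has real rank $r$. First I would note that $\mathcal{B}=D_r\cap\Ima\Sigma^{\RR}_{r-1}$, which is semialgebraic by Proposition~\ref{prop:semialg} together with the Tarski--Seidenberg theorem, and that $\dim\mathcal{B}\le\dim\Ima\Sigma^{\RR}_{r-1}$. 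Hence everything reduces to the single dimension inequality $\dim\Ima\Sigma^{\RR}_{r-1}<\dim D_r$, which I would obtain by chaining three comparisons.

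The first comparison is $\dim D_s=\dim\Ima\Sigma^{\RR}_s$ for every $s$. The inclusion $D_s=\Ima\Sigma^{\RR_+}_s\subseteq\Ima\Sigma^{\RR}_s$ gives ``$\le$''; for ``$\ge$'' I would use that the critical locus of the polynomial map $\Sigma^{\RR}_s$ is a proper subvariety of its domain (Lemma~\ref{thm:sard}), so its complement meets the nonempty Euclidean‑open set of points of $(U_+\times V_+\times W_+)^s$ all of whose coordinates are strictly positive; at a regular point $z$ there the differential of $\Sigma^{\RR}_s$ has rank $\dim\Ima\Sigma^{\RR}_s$, hence this constant rank on a neighbourhood of $z$, and by the constant rank theorem the image of that neighbourhood is a submanifold of dimension $\dim\Ima\Sigma^{\RR}_s$ contained in $D_s$. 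The second comparison is $\dim\Ima\Sigma^{\RR}_s=\dim_{\CC}\Ima\Sigma^{\CC}_s$ for every $s$: the dimension of the image of a polynomial map equals the maximal rank of its Jacobian, the Jacobian of $\Sigma^{\CC}_s$ is a matrix of polynomials with real coefficients, and such a matrix attains the same maximal rank over $\RR$ as over $\CC$ because a polynomial with real coefficients vanishes identically over $\RR$ if and only if it vanishes identically over $\CC$. The third comparison is $\dim_{\CC}\Ima\Sigma^{\CC}_{r-1}<\dim_{\CC}\Ima\Sigma^{\CC}_r$ when $r<r_g$, which is the classical fact that the secant varieties of an irreducible variety strictly increase until they fill the ambient space: the closures $\overline{\Ima\Sigma^{\CC}_s}$ are irreducible and nested, so if $\overline{\Ima\Sigma^{\CC}_{r-1}}=\overline{\Ima\Sigma^{\CC}_r}$ then, writing a general point of $\Ima\Sigma^{\CC}_{r+1}$ as a general point of $\Ima\Sigma^{\CC}_r$ plus a rank‑one tensor and using that $\Ima\Sigma^{\CC}_{r-1}$ is Zariski‑dense in its closure, one gets $\overline{\Ima\Sigma^{\CC}_{r+1}}=\overline{\Ima\Sigma^{\CC}_r}$, hence inductively $\overline{\Ima\Sigma^{\CC}_m}$ is the same proper subvariety for all $m\ge r-1$, contradicting that $\overline{\Ima\Sigma^{\CC}_{r_g}}$ is the whole ambient space. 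Combining the three comparisons,
\[
\dim\mathcal{B}\le\dim\Ima\Sigma^{\RR}_{r-1}=\dim_{\CC}\Ima\Sigma^{\CC}_{r-1}<\dim_{\CC}\Ima\Sigma^{\CC}_r=\dim\Ima\Sigma^{\RR}_r=\dim D_r,
\]
which is what we need.

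The step I expect to be the crux is the first comparison: one has to rule out that restricting the rank‑$r$ addition map to nonnegative factors collapses the dimension, i.e., that $D_r$ might be a thin slice of the real secant set $\Ima\Sigma^{\RR}_r$. What makes it work is simply that the nonnegative cone $(U_+\times V_+\times W_+)^r$ is full‑dimensional in $(U\times V\times W)^r$ and therefore must contain a regular point of the real rank‑$r$ parametrization. The second comparison is a routine argument about extension of the base field, and the third is classical and may alternatively be cited rather than reproved.
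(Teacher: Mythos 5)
Your proof is correct and follows essentially the same strategy as the paper's: the key step in both is that $(U_+\times V_+\times W_+)^r$ is full-dimensional in $(U\times V\times W)^r$, so by Sard (Lemma~\ref{thm:sard}) the map $\Sigma^{\RR_+}_s$ attains the same generic Jacobian rank as $\Sigma^{\RR}_s$, giving $\dim D_s=\dim\Ima\Sigma^{\RR}_s$, and then the strict growth $\dim\Ima\Sigma^{\RR}_{r-1}<\dim\Ima\Sigma^{\RR}_r$ for $r<r_g$ closes the argument. The only cosmetic difference is that the paper establishes the strict growth directly over $\RR$ by the same inductive Jacobian-rank argument you carry out over $\CC$ (your second and third comparisons together), whereas passing through the complexification is unnecessary once one knows the Jacobian of the real-polynomial map $\Sigma_s$ has the same generic rank over $\RR$ and $\CC$; your explicit identification of the bad set as $D_r\cap\Ima\Sigma^{\RR}_{r-1}$ is a welcome clarification of the paper's terse final sentence, which tacitly uses that same inclusion.
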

\begin{proof}
Let the Jacobian of $\Sigma^{\RR}_r$ be $\nabla \Sigma^{\RR}_r$. If $\rank(\nabla \Sigma^{\RR}_{r-1}) = \rank(\nabla \Sigma^{\RR}_{r})$ at general points, then inductively,
\[
\rank(\nabla \Sigma^{\RR}_{r-1}) =\rank(\nabla \Sigma^{\RR}_{r}) = \rank(\nabla \Sigma^{\RR}_{r+1}) = \cdots
\]
at general points, which implies that
\[
\dim (\Ima{\Sigma^{\RR}_{r-1}}) = \dim (\Ima{\Sigma^{\RR}_{r}}) = \cdots = n_Un_Vn_W.
\]
Hence if $r < r_g$, $\rank(\nabla \Sigma^{\RR}_{r-1}) < \rank(\nabla \Sigma^{\RR}_{r})$ at general points, implying that
\[
\dim (\Ima{\Sigma^{\RR}_{r-1}}) < \dim (\Ima{\Sigma^{\RR}_{r}}).
\]
On the other hand, since $(U_+ \times V_+ \times W_+)^{r}$ contains an open subset of $(U \times V \times W)^{r}$, by Lemma~\ref{thm:sard}, $\nabla \Sigma^{\RR_+}_r = \nabla \Sigma^{\RR}_r$ at a general point, $\Ima{\Sigma^{\RR_+}_r}$ contains an open subset of $\Ima{\Sigma^{\RR}_r}$, i.e.,
\begin{multline*}
\dim (D_{r-1}) = \dim (\Ima{\Sigma^{\RR_+}_{r-1}}) = \dim (\Ima{\Sigma^{\RR}_{r-1}}) \\
< \dim (\Ima{\Sigma^{\RR}_{r}}) = \dim (\Ima{\Sigma^{\RR_+}_{r}})= \dim (D_r).
\end{multline*}
Thus a general $A \in D_r$ has nonnegative rank $r$, and the real rank of $A$ is also $r$.
\end{proof}

We now relate real rank to complex rank (and later to nonnegative rank) via general relations between real algebraic varieties and their complexifications. For a field of characteristic zero  $\mathbb{K}$, we write $\mathbb{KP}^n$ for the projective space of dimension $n$ over $\mathbb{K}$. As we briefly mentioned after Definition~\ref{def:cone}, the affine cone of a projective variety $X \subseteq \mathbb{KP}^n$ is the affine variety
\[
\widehat{X} \coloneqq \{ x \in \mathbb{K}^{n+1} : \pi(x) \in X\} \cup \{0\} = \pi^{-1}(X) \cup \{0\},
\]
where $\pi :  \mathbb{K}^{n+1}  \to\mathbb{KP}^n$ is the natural projection that takes a point $x  \in   \mathbb{K}^{n+1}$ to the equivalence class $\pi(x) = \{ \lambda x \in  \mathbb{K}^{n+1} : \lambda \in \mathbb{K}^\times \} \in \mathbb{KP}^n$.
\begin{definition}\label{def:join}
Let $X , Y \subseteq \mathbb{KP}^n$ be projective varieties.
Let $\varphi \colon \widehat{X} \times \widehat{Y} \to \mathbb{K}^{n+1}$, $(x, y) \mapsto x + y$. The \emph{join} of $X$ and $Y$ is the projective variety $ J(X, Y) \subseteq \mathbb{KP}^n$ whose affine cone is the Zariski closure of the image $\varphi(\widehat{X} \times \widehat{Y}) \subseteq \mathbb{K}^n$. The $k$th \emph{secant variety} of $X$ is the projective variety defined by
\[
\sigma^{\mathbb{K}}_k(X) \coloneqq 
\begin{cases}
 J(X, X) & \text{if} \; k = 2,\\
J\bigl(X, \sigma^{\mathbb{K}}_{k-1}(X)\bigr) & \text{if}\; k > 2.
\end{cases}
\]
\end{definition}
 We define
\begin{multline*}
\Var(\mathbb{RP}^n) \coloneqq \{ X \subseteq \mathbb{RP}^n : X \; \text{a real projective variety that is}\\
\text{(i) irreducible, (ii) nondegenerate, (iii) Zariski dense in}\; X(\CC)\}
\end{multline*}
Let $I(X) \subseteq \RR[x_0, \dots, x_n]$ be the homogeneous ideal of $X$ and $r_g(X)$ be the complex generic $X$-rank.  Standard elimination theory (see \cite[Section~2.1]{SidmanVermeire2011} and \cite[Section~2.2]{BlekTei14:ma}) yields the following relation between a real secant variety and its complexification.

\begin{lemma}\label{lem:rcgen}
Let $X \in \Var(\mathbb{RP}^n)$ and $r < r_g(X)$. Then there exists a set of homogeneous generators $f_1, \dots, f_m$ of the ideal $I\bigl(\sigma^{\RR}_r(X)\bigr)$ that also generates the ideal $I\bigl(\sigma^{\CC}_r(X({\CC}))\bigr)$. In particular, $\sigma^{\CC}_r(X({\CC}))$ is the complexification of $\sigma^{\RR}_r(X)$.
\end{lemma}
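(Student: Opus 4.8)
The plan is to compare the two secant varieties $\sigma^{\RR}_r(X)$ and $\sigma^{\CC}_r(X(\CC))$ through their defining ideals, using the hypothesis $r < r_g(X)$ to rule out the only possible source of discrepancy. First I would recall the standard elimination-theoretic description of a secant variety: the join map $\varphi \colon \widehat{X} \times \widehat{X} \to \KK^{n+1}$ (more generally the iterated join of $r$ copies of $\widehat X$) has image whose Zariski closure is the affine cone over $\sigma^{\KK}_r(X)$; projecting the graph of $\varphi$ from the parameter factors, the ideal of this closure is obtained by eliminating the auxiliary variables from the ideal generated by $I(X)$ in each copy of the coordinate ring together with the linear equations expressing that the target point is the sum of the $r$ points. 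Crucially, since $I(X)$ is generated by polynomials with real coefficients, this elimination is a \emph{purely algebraic} operation (Gr\"obner basis computation over $\QQ$, or classical resultant/elimination ideal computations) that does not depend on whether we work over $\RR$ or $\CC$. Hence there is a canonical set of generators $g_1, \dots, g_m \in \RR[x_0, \dots, x_n]$ --- the generators of the elimination ideal --- and $I(\sigma^{\CC}_r(X(\CC))) = (g_1, \dots, g_m)_{\CC[x]}$ while $(g_1, \dots, g_m)_{\RR[x]} \subseteq I(\sigma^{\RR}_r(X))$.

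The content of the lemma is therefore the reverse inclusion, i.e.\ that the real elimination ideal $(g_1, \dots, g_m)_{\RR[x]}$ is already \emph{radical} and equals $I(\sigma^{\RR}_r(X))$ --- equivalently, that $\sigma^{\RR}_r(X)$ is Zariski dense in $\sigma^{\CC}_r(X(\CC))$, so that the latter is genuinely the complexification of the former. Here is where the hypotheses $X \in \Var(\RR\PP^n)$ and $r < r_g(X)$ enter. Since $X$ is irreducible with $X(\CC)$ irreducible (as $X \in \Var(\RR\PP^n)$), the iterated join $\sigma^{\CC}_r(X(\CC))$ is irreducible, and by the criterion recalled just before Lemma~\ref{le:interval} (from \cite{BlekTei14:ma, Sottile16:arxiv}) it suffices to exhibit a nonsingular real point of $\sigma^{\CC}_r(X(\CC))$. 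I would produce such a point as a sum $x_1 + \cdots + x_r$ of $r$ general real points of $\widehat X$: by the semialgebraic Terracini lemma (Lemma~\ref{lem:semiterracinilem}, applied iteratively, using that $X(\CC)$ has real smooth points since $X \in \Var(\RR\PP^n)$) the tangent space of the image of the real join map at such a point is the span of the tangent spaces $\mathsf{T}_{x_i}\widehat X$, which has the expected dimension; the hypothesis $r < r_g(X)$ guarantees this expected dimension is strictly less than $n+1$, so $\sigma^{\CC}_r(X(\CC)) \ne \PP^n$ and this tangent space computation shows the point is a smooth point of $\sigma^{\CC}_r(X(\CC))$ of the correct dimension. (The role of $r < r_g$ is exactly to ensure $\sigma^{\CC}_r$ is a proper subvariety whose real and complex dimensions agree, so no ``extra'' complex component can appear after eliminating.)

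Once $\sigma^{\CC}_r(X(\CC))$ is known to have a nonsingular real point, the cited criterion gives that $\sigma^{\RR}_r(X)$ is Zariski dense in it, hence $I(\sigma^{\CC}_r(X(\CC)))$ and $I(\sigma^{\RR}_r(X))$ are cut out by the same real polynomials (the radical of the real elimination ideal, which by density equals the real elimination ideal's extension restricted back), so any homogeneous generating set $f_1, \dots, f_m$ of $I(\sigma^{\RR}_r(X))$ --- which one may take among real polynomials --- generates $I(\sigma^{\CC}_r(X(\CC)))$ after extension of scalars, and $\sigma^{\CC}_r(X(\CC))$ is the complexification of $\sigma^{\RR}_r(X)$ by definition. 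The main obstacle I anticipate is the step controlling radicality/density: one must be careful that eliminating variables commutes with extension of scalars at the level of \emph{radical} ideals, which is precisely why the nonsingular-real-point argument via Terracini is needed rather than a naive ideal-theoretic manipulation --- without $r < r_g(X)$ the real secant variety could be a proper subvariety of (the real points of) the complex one, and the lemma would fail.
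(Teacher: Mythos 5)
The paper itself does not actually supply a proof of Lemma~\ref{lem:rcgen}: it records the statement as a consequence of ``standard elimination theory,'' pointing the reader to Section~2.1 of Sidman--Vermeire and Section~2.2 of Blekherman--Teitler. So there is no in-paper argument to match your proposal against line-by-line; what you have done is reconstruct a plausible version of the cited elimination-theoretic argument, and on the whole the skeleton is sound. Your first paragraph correctly isolates the two nontrivial facts: that elimination produces \emph{real} generators $g_1, \dots, g_m$ with $(g_1,\dots,g_m)\cdot\CC[x]=I(\sigma^\CC_r(X(\CC)))$ (this is the purely formal part, using that elimination commutes with extension of scalars $\RR \hookrightarrow \CC$ and the Nullstellensatz over $\CC$), and that the remaining content is the reverse inclusion $(g_1,\dots,g_m)_{\RR[x]} \supseteq I(\sigma^\RR_r(X))$, which you correctly identify as equivalent to Zariski density of $\sigma^\RR_r(X)$ in $\sigma^\CC_r(X(\CC))$ (though ``radical'' should be ``real radical'' there).

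Two points deserve flagging. First, your density argument overlaps substantially with what the paper proves \emph{next}, in Lemma~\ref{lem:realsmoothpt}: there the authors establish the existence of a real smooth point of $\sigma^\CC_r(X(\CC))$ by a Jacobian-minor contradiction argument, taking Lemma~\ref{lem:rcgen} as input. You instead produce the smooth point directly via the semialgebraic Terracini Lemma~\ref{lem:semiterracinilem} together with lower-semicontinuity of the Jacobian rank of the join map (using that $\widehat X^r$ is Zariski dense in $\widehat X(\CC)^r$ because $X\in\Var(\RR\PP^n)$). That is a legitimate alternative route and is arguably cleaner since it avoids the apparent circularity of using Lemma~\ref{lem:rcgen}'s generators inside the proof that they are the right generators. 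Second, two minor inaccuracies: (i) the span $\operatorname{span}\{\mathsf T_{x_i}\widehat X\}$ at a general real point need not have the ``expected dimension'' in the na\"ive $\min(r(\dim X+1), n+1)$ sense --- secant varieties can be defective --- what matters, and what the semicontinuity argument actually gives, is that this span has dimension equal to $\dim\sigma^\CC_r(X(\CC))$, which is all you need to conclude smoothness; (ii) your closing remark about the role of $r<r_g$ overstates it --- the real and complex dimensions of the secants agree at general real parameter points by the rank argument whether or not $r<r_g$, and when $r\ge r_g$ both ideals are zero and the lemma is trivial, so the hypothesis is harmless rather than load-bearing in your approach.
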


It is also not difficult to see the following relation between smooth points on a real secant variety and general points on its complexification.
\begin{lemma}\label{lem:realsmoothpt}
Let $X \in \Var(\mathbb{RP}^n)$ and $r < r_g(X)$. Then  $\sigma^{\RR}_r(X) \in \Var(\mathbb{RP}^n)$. 
\end{lemma}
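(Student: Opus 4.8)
The plan is to verify the three defining properties of $\Var(\mathbb{RP}^n)$ for $\sigma^{\RR}_r(X)$, namely that it is (i) irreducible, (ii) nondegenerate, and (iii) Zariski dense in its complexification, using the hypothesis $r < r_g(X)$ together with the preceding lemmas. I would first dispose of (i) and (iii) quickly by invoking Lemma~\ref{lem:rcgen}: since that lemma already asserts that $\sigma^{\CC}_r(X(\CC))$ is the complexification of $\sigma^{\RR}_r(X)$, property (iii) amounts to saying that a real variety whose complexification has a nonsingular real point is Zariski dense in that complexification, which is precisely the criterion recalled in the excerpt after Definition~\ref{def:cone} (citing \cite{BlekTei14:ma, Sottile16:arxiv}). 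For irreducibility of $\sigma^{\RR}_r(X)$, I would argue that $\sigma^{\RR}_r(X)$ is the Zariski closure of $\varphi\bigl(\widehat{X}\times\widehat{\sigma^{\RR}_{r-1}(X)}\bigr)$ (the continuous image of an irreducible set under a morphism, closed up), hence irreducible by induction on $r$ starting from the irreducibility of $X$ itself; alternatively one notes that $\sigma^{\CC}_r(X(\CC))$ is irreducible because the complexification of an irreducible real variety is irreducible, and a real variety whose complexification is irreducible is itself irreducible.

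The heart of the proof, and the step I expect to be the main obstacle, is exhibiting a nonsingular real point on $\sigma^{\RR}_r(X)$ — equivalently, showing that $\sigma^{\CC}_r(X(\CC))$ has a real smooth point — since that is what simultaneously drives property (iii) and lets us keep the inductive hypothesis of $\Var(\mathbb{RP}^n)$ alive. Here the key input is that $r < r_g(X)$ forces $\sigma^{\CC}_r(X(\CC))$ to be a \emph{proper} subvariety of $\PP^n$ that is non-defective-in-the-relevant-sense, so its dimension equals $\min\{r(\dim X + 1) - 1, n\} = r(\dim X+1)-1 < n$; more importantly, a general point of $\sigma^{\CC}_r(X(\CC))$ lies on a secant $(r-1)$-plane through $r$ general points of $X(\CC)$, and since $X\in\Var(\mathbb{RP}^n)$ one may pick these $r$ points to be real smooth points of $X$ with the secant plane in general position. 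I would then use a Terracini-type computation — concretely Lemma~\ref{lem:semiterracinilem}, or its classical complex analogue — to show that at such a point $x_1 + \cdots + x_r$ (with the $x_i$ real) the tangent space to $\sigma^{\CC}_r(X(\CC))$ has the expected dimension, so that point is smooth; and being a sum of real points it is a real point. The subtlety to be careful about is that ``a general point of the secant variety is smooth'' is automatic over $\CC$, but one must check the specific smooth point can be taken \emph{real}, which is exactly where the hypothesis $X(\CC)$ has a real smooth point (built into $X\in\Var(\mathbb{RP}^n)$) is used: the real smooth locus of $X$ is Zariski dense in $X(\CC)$, so generic choices of the $r$ summands can be made real.

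Having produced a real smooth point of $\sigma^{\CC}_r(X(\CC))$, the three properties follow: nondegeneracy of $\sigma^{\RR}_r(X)$ is inherited from nondegeneracy of $X$ (a variety containing a nondegenerate variety, and more directly $\sigma^{\RR}_r(X) \supseteq X$, which is already nondegenerate, so $\sigma^{\RR}_r(X)$ is too); irreducibility is handled as above; and Zariski density of $\sigma^{\RR}_r(X)$ in its complexification $\sigma^{\CC}_r(X(\CC))$ follows from the ``real smooth point'' criterion. I would organize the write-up as: (1) cite Lemma~\ref{lem:rcgen} to identify the complexification; (2) note nondegeneracy and irreducibility are immediate/inductive; (3) spend the bulk of the argument on the existence of a real smooth point via the secant-plane/Terracini argument, using the real smooth locus of $X$ being dense; (4) conclude. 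The only genuinely delicate point, worth stating as a short internal lemma or remark, is that the expected-dimension (non-defectivity) behaviour needed at the smooth point is guaranteed for $r < r_g(X)$ — for $r$ strictly below the generic rank the map $\Sigma^{\CC}_r$ is not yet dominant, and along the way its generic fibre structure gives the expected tangent space; I would lean on Lemma~\ref{lem:rcgen} and the general position of the chosen real points rather than re-proving a secant-variety dimension count from scratch.
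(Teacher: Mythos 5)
Your proposal reaches the right conclusion but by a genuinely different route from the paper. Both arguments reduce the lemma to exhibiting a real smooth point of $\sigma^{\CC}_r(X(\CC))$ (the irreducibility and nondegeneracy of $\sigma^{\RR}_r(X)$ being comparatively routine, and Zariski density then following from the criterion recalled after Definition~\ref{def:cone}). The paper, however, argues by \emph{contradiction} via the Jacobian criterion: if every real point of $\sigma^{\CC}_r(X(\CC))$ were singular, then $\sigma^{\RR}_r(X)$ would lie in the zero locus of the $(n-k)\times(n-k)$ minors of the Jacobian of the real generators $f_1,\dots,f_m$ from Lemma~\ref{lem:rcgen}, contradicting that $\sigma^{\RR}_r(X)$ has real smooth points as a real variety in its own right. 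Your proof is \emph{constructive and geometric}: take $r$ general real points of $X$ (possible since the real smooth locus of $X$ is Zariski dense in $X(\CC)$), and observe that their sum avoids the proper closed singular locus of $\sigma^{\CC}_r(X(\CC))$ because the preimage of the smooth locus under the addition map is nonempty Zariski open and hence meets $(\widehat{X}(\RR))^r$. The paper's approach is terser and entirely algebraic; yours is more transparent and foreshadows the density reasoning used again in Lemma~\ref{lem:rcxrkiden}.

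Two minor inaccuracies to flag, neither fatal. First, $r < r_g(X)$ does \emph{not} imply that $\sigma^{\CC}_r(X(\CC))$ is non-defective, so the dimension formula $\min\{r(\dim X+1)-1, n\}$ you cite may fail; the argument does not actually need it, since all that matters is that the singular locus is a \emph{proper} closed subset. Second, the Terracini computation should show the tangent space at $x_1+\cdots+x_r$ has the \emph{actual} dimension of $\sigma^{\CC}_r(X(\CC))$, not the ``expected'' one --- these coincide only in the non-defective case. Once stripped of the defectivity remarks, your core density argument stands and is correct.
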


\begin{proof}
 It suffices to show that at least one point in $\sigma^{\RR}_r(X)$ is a smooth point in $\sigma^{\CC}_r(X({\CC}))$.  Suppose not. Then $\sigma^{\RR}_r(X)$ is in the singular locus of $\sigma^{\CC}_r(X({\CC}))$. Let $k =\dim \sigma^{\CC}_r(X({\CC}))$. Then $\sigma^{\RR}_r(X)$ satisfies the equations given by the vanishing of the $(n -  k) \times (n - k)$ minors of
\[
\begin{bmatrix}
\partial f_1/\partial x_0 & \cdots & \partial f_1/\partial x_n \\
\vdots & \ddots & \vdots \\
\partial f_m/\partial x_0 & \cdots & \partial f_m/\partial x_n \end{bmatrix},
\]
which are defined over $\RR$. On the other hand, these minors are not all in $I\bigl(\sigma_r^{\RR}(X)\bigr)$ as $\sigma_r^{\RR}(X)$  itself  has at least one real smooth point --- a contradiction. Hence at least one point in $\sigma^{\RR}_r(X)$ is a smooth point of $\sigma^{\CC}_r(X({\CC}))$.
\end{proof}

By \cite[Corollary~1.8]{Adlandsvik87:ms}, $\sigma^{\CC}_{r-1}(X({\CC}))$ is in the singular locus of $\sigma^{\CC}_r (X({\CC}))$. Applying this to $X = \Seg(\PP U \times \PP V \times \PP W)$, the \emph{Segre variety} of rank-one tensors, we obtain the following from Lemma~\ref{lem:realsmoothpt}.

\begin{lemma}\label{lem:realcomp}
Let $r < r_g$. Then a general real tensor $A$ of real rank $r$ has complex rank $r$.
\end{lemma}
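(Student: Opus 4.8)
The plan is to deduce the statement from Lemmas~\ref{lem:rcgen} and \ref{lem:realsmoothpt} together with the Adlandsvik containment quoted just above, reducing it to an elementary dimension count comparing the real secant variety of the Segre with its complexification. Write $X = \Seg(\PP U \times \PP V \times \PP W) \subseteq \mathbb{RP}^n$ with $n = n_U n_V n_W - 1$: it is irreducible and nondegenerate, and its complexification $X(\CC)$ (the complex Segre) is smooth and has real points, hence has a real smooth point, so $X \in \Var(\mathbb{RP}^n)$. The case $r = 1$ is trivial since a real rank-one tensor remains rank one over $\CC$, so I would assume $r \ge 2$. Since $r < r_g = r_g(X)$, Lemma~\ref{lem:realsmoothpt} gives $\sigma^{\RR}_r(X) \in \Var(\mathbb{RP}^n)$; in particular $\sigma^{\RR}_r(X)$ is irreducible and Zariski dense in $\sigma^{\CC}_r(X(\CC))$, and by Lemma~\ref{lem:rcgen} the latter is its complexification.

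Next I would pin down the relevant locus. The set $\{ A : \rank_{\RR}(A) \le r \} = \Ima \Sigma^{\RR}_r$ is semialgebraic and, by the very definition of the real secant variety (Definition~\ref{def:join}), Zariski dense in the affine cone $\widehat{\sigma^{\RR}_r(X)}$. Arguing as in the proof of Lemma~\ref{lem:nonnegreal}, $r < r_g$ forces $\dim \Ima \Sigma^{\RR}_{r-1} < \dim \Ima \Sigma^{\RR}_r$, so $\{ A : \rank_{\RR}(A) = r \} = \Ima \Sigma^{\RR}_r \setminus \Ima \Sigma^{\RR}_{r-1}$ is nonempty and still Zariski dense in $\widehat{\sigma^{\RR}_r(X)}$. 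A real tensor $A$ with $\rank_{\RR}(A) = r$ automatically has $\rank_{\CC}(A) \le r$ (read the decomposition over $\CC$), so it suffices to show that a general such $A$ is not contained in $\widehat{\sigma^{\CC}_{r-1}(X(\CC))}$, with the convention $\sigma^{\CC}_1 \coloneqq X(\CC)$: indeed $\rank_{\CC}(A) \le r-1$ would place $A$ in $\Ima \Sigma^{\CC}_{r-1}$, whose Zariski closure is exactly $\widehat{\sigma^{\CC}_{r-1}(X(\CC))}$.

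The main step is then the dimension count. By \cite[Corollary~1.8]{Adlandsvik87:ms}, $\sigma^{\CC}_{r-1}(X(\CC))$ lies in the singular locus of $\sigma^{\CC}_r(X(\CC))$, hence is a proper Zariski-closed subset of the irreducible variety $\sigma^{\CC}_r(X(\CC))$. Since $\sigma^{\RR}_r(X)$ is Zariski dense in $\sigma^{\CC}_r(X(\CC))$, it is not contained in $\sigma^{\CC}_{r-1}(X(\CC))$, so $\sigma^{\CC}_{r-1}(X(\CC)) \cap \sigma^{\RR}_r(X)$ is a proper subvariety of the irreducible $\sigma^{\RR}_r(X)$, hence of strictly smaller dimension. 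Passing to affine cones, $\widehat{\sigma^{\CC}_{r-1}(X(\CC))}$ meets $\{ A : \rank_{\RR}(A) = r \}$ in a semialgebraic set of strictly smaller dimension, so a general real tensor $A$ of real rank $r$ avoids it; thus $\rank_{\CC}(A) \ge r$, and with $\rank_{\CC}(A) \le \rank_{\RR}(A) = r$ we conclude $\rank_{\CC}(A) = r$.

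I expect the only mildly delicate point to be the bookkeeping that "general in $\{ A : \rank_{\RR}(A) = r \}$" is the same notion of generality as "general in $\widehat{\sigma^{\RR}_r(X)}$", which is exactly where the strict inequality $\dim \Ima \Sigma^{\RR}_{r-1} < \dim \Ima \Sigma^{\RR}_r$ (valid for $r < r_g$) is used; everything else is a direct application of Lemmas~\ref{lem:rcgen} and \ref{lem:realsmoothpt} and of the cited result of Adlandsvik.
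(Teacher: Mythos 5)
Your proof is correct and takes essentially the same route as the paper: the paper's own proof consists of exactly the two ingredients you use — \AA dlandsvik's result that $\sigma^{\CC}_{r-1}(X(\CC))$ lies in the singular locus of $\sigma^{\CC}_r(X(\CC))$, and Lemma~\ref{lem:realsmoothpt}'s guarantee that $\sigma^{\RR}_r(X) \in \Var(\mathbb{RP}^n)$ (hence Zariski dense in its complexification) — with the remaining dimension-count bookkeeping left implicit, which you simply spell out.
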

\begin{theorem}\label{thm:riden1}
 Let $r < r_g$. Then a general $A \in D_r$ has both real rank and complex rank equal to $r$. If $U \otimes V \otimes W$ is $r$-identifiable, then $A$ has a unique nonnegative rank-$r$ decomposition. 
\end{theorem}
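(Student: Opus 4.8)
The plan is to assemble Theorem~\ref{thm:riden1} from the three lemmas just proved, using a dimension-counting argument to control exceptional loci. First I would invoke Lemma~\ref{lem:nonnegreal}: since $r < r_g$, a general $A \in D_r$ has nonnegative rank exactly $r$ and hence real rank exactly $r$. Next, I would combine this with Lemma~\ref{lem:realcomp}, which says a general real tensor of real rank $r$ has complex rank $r$. The subtlety here is that "general $A\in D_r$" and "general real tensor of real rank $r$" refer to different ambient sets, so I cannot directly compose the two genericity statements; instead I would argue as follows. By the proof of Lemma~\ref{lem:nonnegreal}, $D_r$ contains a Euclidean-open subset of $\Ima\Sigma^{\RR}_r$, and the set of real rank-$r$ tensors with complex rank $< r$ is, by Lemma~\ref{lem:realcomp}, contained in a lower-dimensional semialgebraic subset $C$ of $\Ima\Sigma^{\RR}_r$. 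Since $\dim(D_r\cap C)\le \dim C < \dim \Ima\Sigma^{\RR}_r = \dim D_r$, a general $A\in D_r$ avoids $C$, so $\operatorname{rank}_{\CC}(A)=\operatorname{rank}_{\RR}(A)=\operatorname{rank}_+(A)=r$.

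For the identifiability assertion, suppose $U\otimes V\otimes W$ is $r$-identifiable, i.e., a general complex rank-$r$ tensor has a unique rank-$r$ decomposition over $\CC$. The set $Z\subseteq U_{\CC}\otimes V_{\CC}\otimes W_{\CC}$ of rank-$r$ tensors \emph{without} a unique complex decomposition is contained in a proper Zariski-closed (in particular lower-dimensional) subset of $\Ima\Sigma^{\CC}_r$. By Lemma~\ref{lem:rcgen} and Lemma~\ref{lem:realsmoothpt}, for $r<r_g$ the complexification of $\Ima\Sigma^{\RR}_r$ is $\Ima\Sigma^{\CC}_r$ and $\Ima\Sigma^{\RR}_r$ is Zariski dense in it, so $\dim_{\RR}(\overline{\Ima\Sigma^{\RR}_r}\cap Z(\RR)) < \dim_{\RR}\Ima\Sigma^{\RR}_r = \dim D_r$. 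Intersecting with $D_r$ keeps the dimension strictly below $\dim D_r$, so a general $A\in D_r$ lies outside $Z$. For such $A$: its nonnegative decomposition $A = \sum_{i=1}^r u_i\otimes v_i\otimes w_i$ with $u_i,v_i,w_i$ nonnegative is in particular a complex rank-$r$ decomposition (using $\operatorname{rank}_{\CC}(A)=r$), and since $A\notin Z$ this complex decomposition is unique. A fortiori any nonnegative rank-$r$ decomposition of $A$ must coincide with it, so the nonnegative decomposition is (essentially) unique.

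I expect the main obstacle to be the bookkeeping of genericity across the three ambient sets $D_r\subseteq \Ima\Sigma^{\RR}_r\subseteq \Ima\Sigma^{\CC}_r(\RR)$ — making sure that "general in $D_r$" is strong enough to simultaneously avoid the real-to-complex exceptional locus from Lemma~\ref{lem:realcomp} and the non-uniqueness locus coming from $r$-identifiability. The key fact that makes this work is that all these exceptional loci are semialgebraic of dimension strictly less than $\dim D_r$ (equivalently, $\dim\Ima\Sigma^{\RR}_r$, equivalently $\dim_{\CC}\Ima\Sigma^{\CC}_r$, all equal since $r<r_g$ forces the real image to be Zariski dense in the complex image by Lemma~\ref{lem:rcgen}), so a finite union of them is still lower-dimensional, and its complement in $D_r$ is exactly where "general $A\in D_r$" lives. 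Everything else is a direct citation of Lemmas~\ref{lem:nonnegreal}, \ref{lem:realcomp}, \ref{lem:rcgen}, and \ref{lem:realsmoothpt}.
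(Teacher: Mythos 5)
The first half of your proposal, proving the rank equalities, is correct and follows essentially the same route as the paper's: invoke Lemma~\ref{lem:nonnegreal} to get $\rank_+(A)=\rank_\RR(A)=r$, and combine with Lemma~\ref{lem:realcomp} to get $\rank_\CC(A)=r$, using that $D_r$ contains a Euclidean open subset of $\Ima\Sigma^\RR_r$ so exceptional loci of $\Ima\Sigma^\RR_r$ intersect $D_r$ in lower dimension. The paper's proof is terse here, and you have filled in the dimension bookkeeping correctly.

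The second half has a genuine gap caused by a misreading of the hypothesis. You parse ``$U\otimes V\otimes W$ is $r$-identifiable'' as complex identifiability: ``a general complex rank-$r$ tensor has a unique rank-$r$ decomposition over $\CC$.'' But $U,V,W$ are \emph{real} vector spaces, and the paper's definition of $r$-identifiability is relative to the base field of the tensor space; the hypothesis therefore means \emph{real} $r$-identifiability, i.e., a general real rank-$r$ tensor has a unique \emph{real} rank-$r$ decomposition. These are not equivalent a priori: the paper proves only the implication complex $\Rightarrow$ real (Lemma~\ref{lem:rcxrkiden} and Theorem~\ref{thm:cmplxidrliden}), which would be vacuous if the two notions coincided. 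Your argument, which builds the exceptional set $Z$ from the complex non-uniqueness locus and then descends to $\RR$ via Lemmas~\ref{lem:rcgen} and~\ref{lem:realsmoothpt}, only shows that $Z$ is lower-dimensional under the complex hypothesis, so you have proved the conclusion under a strictly stronger assumption than the theorem states.

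The fix also simplifies the argument, and is what the paper does: work entirely over $\RR$. Real $r$-identifiability says the set $Z'\subseteq\Ima\Sigma^\RR_r$ of real rank-$r$ tensors without a unique real rank-$r$ decomposition is contained in a semialgebraic subset of dimension strictly less than $\dim\Ima\Sigma^\RR_r$. Since $D_r$ contains a Euclidean open subset of $\Ima\Sigma^\RR_r$ and $\dim D_r=\dim\Ima\Sigma^\RR_r$, we have $\dim(D_r\cap Z')<\dim D_r$, so a general $A\in D_r$ avoids $Z'$ and hence has a unique real rank-$r$ decomposition. As its nonnegative rank-$r$ decomposition is in particular a real rank-$r$ decomposition, that nonnegative decomposition is the unique one. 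No complexification is needed for this part, and Lemmas~\ref{lem:rcgen} and~\ref{lem:realsmoothpt} play no role in the paper's proof of this theorem.
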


\begin{proof}
The claims about ranks are just Lemmas~\ref{lem:nonnegreal} and \ref{lem:realcomp}. Since $D_r$ contains an open subset of $\Ima{\Sigma^{\RR}_r}$, a general point in $D_r$ has a unique rank-$r$  decomposition.
\end{proof}

There has been a significant amount of work on both defectivity \cite{Strassen83:laa, Lickteig85:laa, AboOP09:tams} and identifiability \label{StegemanLine} \cite{Krus77:laa,  Stegeman09:laa, ChiantiniOtt12:simax, DomanovDeLa131:simax, DomanovDeLa132:simax, BoChOtt14:ampa, ChiaOV14:simax, DomanovDeLa14:arxiv}. While these focus mainly on complex tensors, some of these methods can be also adapted to real tensors. Two notable examples are  \cite[Theorem~1.1]{ChiantiniOtt12:simax} and \cite[Proposition~1.6]{DomanovDeLa14:arxiv}, stated below for real tensors.

\begin{theorem}[Chiantini--Ottaviani]\label{thm:riden}
Let $U, V$, and $W$  be real vector spaces with dimensions $\dim U \le \dim V \le \dim W$. Let $\alpha, \beta$ be minimum integers such that $2^{\alpha} \le \dim U$ and $2^{\beta} \le \dim V$. Then $U \otimes V \otimes W$ is $r$-identifiable if $r \le 2^{\alpha+\beta-2}$.
\end{theorem}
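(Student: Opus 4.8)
The plan is to deduce the real statement from its complex counterpart, which is \cite[Theorem~1.1]{ChiantiniOtt12:simax}: under the same hypotheses on the dimensions, $U_\CC \otimes V_\CC \otimes W_\CC$ is $r$-identifiable as a complex tensor space when $r \le 2^{\alpha+\beta-2}$. The transfer is then carried out with the real-to-complex machinery already assembled in this section (Lemmas~\ref{lem:rcgen}, \ref{lem:realsmoothpt}, \ref{lem:realcomp}), applied to the Segre variety $X \coloneqq \Seg(\PP U \times \PP V \times \PP W)$, whose affine cone of rank-one tensors sits in $U \otimes V \otimes W$.

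First I would record that the hypothesis forces $r < r_g$, where $r_g$ denotes the complex generic rank of $U_\CC \otimes V_\CC \otimes W_\CC$: using $2^\alpha \le n_U$, $2^\beta \le n_V \le n_W$ together with the crude bound $r_g \ge \lceil n_U n_V n_W / (n_U + n_V + n_W - 2) \rceil$, a short computation shows that $2^{\alpha + \beta - 2}$ always lies strictly below $r_g$ (the statement being vacuous when $\alpha + \beta \le 2$). With $r < r_g$ in hand, Lemma~\ref{lem:rcgen} yields that $\sigma^\CC_r(X(\CC))$ is the complexification of $\sigma^\RR_r(X)$, so the affine cone $\widehat{\sigma^\RR_r(X)}$ is Zariski-dense in the irreducible affine cone $\widehat{\sigma^\CC_r(X(\CC))}$; and Lemma~\ref{lem:realcomp} yields that a general real tensor of real rank $r$ has complex rank exactly $r$.

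Next, complex $r$-identifiability says precisely that the locus $\widehat Z \subseteq \widehat{\sigma^\CC_r(X(\CC))}$ of tensors admitting more than one complex rank-$r$ decomposition is contained in a proper Zariski-closed subset. Since $X$, its secant varieties, and the condition of non-uniqueness of decomposition are all defined over $\RR$, the Zariski closure of $\widehat Z$ is stable under complex conjugation, hence defined by real polynomials; being a proper closed subset of the irreducible cone $\widehat{\sigma^\CC_r(X(\CC))}$, in which $\widehat{\sigma^\RR_r(X)}$ is Zariski-dense, it cannot contain $\widehat{\sigma^\RR_r(X)}$. Therefore $\widehat Z$ meets $\widehat{\sigma^\RR_r(X)}$ only in a proper, hence lower-dimensional, subvariety; and since the real tensors of real rank exactly $r$ form a full-dimensional semialgebraic subset of $\widehat{\sigma^\RR_r(X)}$ for $r < r_g$ (arguing with Jacobian ranks as in the proof of Lemma~\ref{lem:nonnegreal}), a general real tensor $A$ of real rank $r$ lies outside $\widehat Z$. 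Such an $A$ has, on the one hand, at least one real rank-$r$ decomposition (by the definition of real rank) and, on the other hand, a unique complex rank-$r$ decomposition (as $A \notin \widehat Z$ and $\rank_{\CC}(A) = r$ by Lemma~\ref{lem:realcomp}); the real decomposition is in particular a complex rank-$r$ decomposition, so it coincides with the unique complex one. Hence $A$ has a unique real rank-$r$ decomposition, which is exactly the assertion that $U \otimes V \otimes W$ is $r$-identifiable.

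The step I expect to be the crux is the transfer itself, that is, arguing that being ``general over $\RR$'' suffices to dodge the complex non-identifiable locus $\widehat Z$. This relies on two points: the Zariski density $\overline{\sigma^\RR_r(X)}(\CC) = \sigma^\CC_r(X(\CC))$, valid precisely because $r < r_g$ (Lemma~\ref{lem:rcgen}), and the descent of $\widehat Z$ to $\RR$; once both are secured, the remaining verifications are routine. A secondary point worth noting is that the complex theorem of Chiantini--Ottaviani carries a short list of exceptional formats, which must either be excluded from the statement or checked to cause no trouble in the range $r \le 2^{\alpha+\beta-2}$.
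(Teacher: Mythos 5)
The paper does not actually prove Theorem~\ref{thm:riden}: it is stated without a proof environment, preceded by the remark that the complex identifiability arguments of \cite{ChiantiniOtt12:simax} ``can be also adapted to real tensors.'' In other words, the paper's implicit route is to rerun the original Chiantini--Ottaviani argument (weak defectivity, tangential contact loci) over $\RR$, whereas you deduce the real statement from the complex one via the complexification machinery. That is a genuinely different route --- and, interestingly, it is precisely the content of Theorem~\ref{thm:cmplxidrliden} (with Lemma~\ref{lem:rcxrkiden} as the engine), which the paper proves \emph{after} stating Theorem~\ref{thm:riden}. So your proof is essentially ``apply Theorem~\ref{thm:cmplxidrliden} to the complex Chiantini--Ottaviani theorem, after checking $r < r_g$,'' which is logically sound but reverses the order of presentation in the paper. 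Your verification of $r < r_g$ is the one piece of genuine new content needed to make this route work, and the computation is correct: from $n_U + n_V + n_W - 2 < 3n_W$ one gets $r_g > n_U n_V/3 > n_U n_V /4 \ge 2^{\alpha+\beta-2}$. (Note that the paper's own sentence claiming the expected generic rank ``is at least $r_g$'' appears to have the inequality reversed; your lower bound on $r_g$ is the correct direction.) Two small inaccuracies, neither fatal: the Zariski density of $\sigma^{\RR}_r(X)$ in $\sigma^{\CC}_r(X(\CC))$ follows from Lemma~\ref{lem:realsmoothpt} rather than Lemma~\ref{lem:rcgen} alone (the latter gives the complexification statement but does not by itself rule out that the real locus misses a component); and the descent of the bad locus $\widehat Z$ to $\RR$ is not actually needed --- density plus properness suffices, which is how Lemma~\ref{lem:rcxrkiden} argues. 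Finally, your concern about ``a short list of exceptional formats'' is misplaced for this particular result: Theorem~1.1 of \cite{ChiantiniOtt12:simax} has no exceptions (you may be thinking of \cite{ChiaOV14:simax}, which is the one with exceptional triples and underlies the later corollary in this section). Also worth flagging, though it is the paper's issue and not yours: ``minimum integers such that $2^{\alpha} \le \dim U$'' should read ``maximum,'' since $\alpha = 0$ always satisfies the stated inequality.
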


\begin{theorem}[Domanov--De~Lathauwer]\label{thm:DomanDeL}
Let  $U, V$, and $W$ be real vector spaces with dimensions $\dim U =m$, $\dim V =n$, and $\dim W = p$. If
\[
2 \le m \le n \le p \le r \qquad \text{and} \qquad 2r \le  m+n+2p-2-\sqrt{(m-n)^2+4p},
\]
then $U \otimes V \otimes W$ is $r$-identifiable.
\end{theorem}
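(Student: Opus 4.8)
The plan is to treat Theorem~\ref{thm:DomanDeL} as the real analogue of its complex source \cite[Proposition~1.6]{DomanovDeLa14:arxiv}, and to transfer the latter by a complexification argument exploiting the fact that the uniqueness criterion of Domanov--De~Lathauwer is \emph{effective} and defined over the integers. Concretely, their proof produces, from the numerical hypotheses, a finite list of polynomials $g_1,\dots,g_s$ with integer coefficients in the entries of the factor matrices $(A,B,C)\in\KK^{m\times r}\times\KK^{n\times r}\times\KK^{p\times r}$ --- assembled from compound and Khatri--Rao-type matrices built out of $A$, $B$, $C$ --- with the property that whenever the $g_j$ do not all vanish at $(A,B,C)$, the tensor $\sum_{i=1}^r a_i\otimes b_i\otimes c_i$ has rank exactly $r$ and a unique rank-$r$ decomposition; and the bound $2r\le m+n+2p-2-\sqrt{(m-n)^2+4p}$ is precisely what makes the $g_j$ not all identically zero over $\CC$. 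The implication ``the $g_j$ do not all vanish $\Rightarrow$ unique rank-$r$ decomposition'' is pure linear algebra and holds over any field, in particular over $\RR$, which is the sense in which the argument ``adapts''.

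First I would verify that the hypotheses $2\le m\le n\le p\le r$ and $2r\le m+n+2p-2-\sqrt{(m-n)^2+4p}$ force $r<r_g$; this is a direct (if slightly tedious) comparison with the lower bound $r_g\ge\lceil mnp/(m+n+p-2)\rceil$. With $r<r_g$ in hand, Lemma~\ref{lem:realsmoothpt} applied to $X=\Seg(\PP U\times\PP V\times\PP W)\in\Var(\mathbb{RP}^{mnp-1})$ gives $\sigma^{\RR}_r(X)\in\Var(\mathbb{RP}^{mnp-1})$, i.e.\ $\sigma^{\RR}_r(X)$ is Zariski dense in the irreducible variety $\sigma^{\CC}_r(X(\CC))$, and (as in the proof of Lemma~\ref{lem:nonnegreal}) $\Ima\Sigma^{\RR}_r$ is Zariski dense in $\sigma^{\RR}_r(X)$ with general point of real rank exactly $r$. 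Next, since the $g_j$ are not all identically zero on $\CC^{m\times r}\times\CC^{n\times r}\times\CC^{p\times r}$ and $\RR^{m\times r}\times\RR^{n\times r}\times\RR^{p\times r}$ is Zariski dense in it, the $g_j$ are not all identically zero over $\RR$ either; hence, identifying $(U\times V\times W)^r$ with $\RR^{m\times r}\times\RR^{n\times r}\times\RR^{p\times r}$ via a choice of bases, the good locus $G\subseteq(U\times V\times W)^r$ on which some $g_j$ is nonzero is a nonempty Zariski-open, and therefore Zariski-dense, semialgebraic subset.

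It then remains to translate density of $G$ into the identifiability statement. Every $T\in\Sigma^{\RR}_r(G)$ has $\rank_\RR(T)=r$ and a unique rank-$r$ decomposition; conversely, any real tensor of real rank $r$ that has a unique \emph{complex} rank-$r$ decomposition must have its (existing) real rank-$r$ decomposition coincide with that complex one, so the latter is in fact real and is the unique real rank-$r$ decomposition. By the complex theorem of Domanov--De~Lathauwer, the set of complex tensors of complex rank $r$ with a non-unique rank-$r$ decomposition is contained in a proper subvariety $Z\subsetneq\sigma^{\CC}_r(X(\CC))$; since $\sigma^{\RR}_r(X)$ is Zariski dense in the irreducible $\sigma^{\CC}_r(X(\CC))$, the locus $Z\cap\sigma^{\RR}_r(X)$ is semialgebraic of dimension $<\dim\sigma^{\CC}_r(X(\CC))=\dim\sigma^{\RR}_r(X)$. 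Intersecting with the Zariski-dense locus of tensors of real rank exactly $r$ shows that the set of real rank-$r$ tensors without a unique real rank-$r$ decomposition is semialgebraic of dimension strictly less than $\dim\sigma^{\RR}_r(X)$, which is exactly the assertion that $U\otimes V\otimes W$ is $r$-identifiable.

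The step I expect to be the main obstacle is the interface between the complex and real pictures: one must be sure that the Domanov--De~Lathauwer uniqueness argument is genuinely field-independent at the linear-algebra level (so that the explicit polynomials $g_j$ and the implication they control survive verbatim over $\RR$), and, more delicately, that $r<r_g$ really follows from the numerical hypotheses, since that is what unlocks Lemma~\ref{lem:realsmoothpt} and the generic equality of real and complex rank. If one prefers not to quote the complex statement as a black box, the entire argument can instead be carried out directly over $\RR$, the only essentially new ingredient relative to the complex proof being the Zariski-density input supplied by Lemma~\ref{lem:realsmoothpt}.
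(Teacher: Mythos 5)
The paper itself supplies no proof of this theorem: it states Theorems~\ref{thm:riden} and~\ref{thm:DomanDeL} as real analogues of the cited complex results, prefaced only by the remark that ``some of these methods can be also adapted to real tensors.'' You were therefore filling a genuine gap, and what you wrote is sound. Your primary route --- that the Domanov--De~Lathauwer uniqueness criterion consists of rank conditions on compound and Khatri--Rao matrices built from the factor matrices, hence of non-vanishing of minors, which are polynomials over $\mathbb{Q}$; that the implication ``criterion holds $\Rightarrow$ rank $r$ and uniqueness'' is field-agnostic linear algebra; and that non-vacuousness of these polynomials over $\CC$ (which the bound $2r\le m+n+2p-2-\sqrt{(m-n)^2+4p}$ secures in the complex source) passes to $\RR$ because $\RR^N$ is Zariski dense in $\CC^N$ --- is what the authors' word ``adapted'' must mean, and it yields a dense open semialgebraic $G$ in the real parameter space that dominates $\sigma_r^{\RR}(\Seg(\PP U\times\PP V\times\PP W))$, settling generic real $r$-identifiability. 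This route works entirely in the parameter space and does not, strictly speaking, require $r<r_g$.

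Your second route, in which you quote the complex Domanov--De~Lathauwer theorem as a black box and transfer it to $\RR$ using Zariski density of $\sigma_r^{\RR}(X)$ in $\sigma_r^{\CC}(X(\CC))$ and the equality of real and complex rank at a general point, is exactly the content the paper formalizes immediately afterward as Lemma~\ref{lem:rcxrkiden} and Theorem~\ref{thm:cmplxidrliden}; had those been placed before the statement in question it would have been a one-line corollary. That route does need $r<r_g$, and you were right to flag this as the delicate point. It does hold: combining $p\le r$ with the square-root bound yields (after clearing the radical) $p\le(m-1)(n-1)$, and a further short computation gives $r(m+n+p-2)<mnp$, whence $r<r_g$. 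Since the two arguments you interleave both close the gap, there is no error, only some redundancy; presenting just the parameter-space route, with the $r<r_g$ verification dropped, would be the tightest version and closest to what the authors likely had in mind.
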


Applying Theorem~\ref{thm:riden} to  Theorem~\ref{thm:riden1}, we obtain explicit examples.
\begin{corollary}\label{cor:unique1}
Let $n \geq 4$ and $r \le \floor{n^2/16}$. A general $A \in \RR_+^{n \times n \times n}$ with $\rank_+(A) = r$ has complex rank $r$ (and therefore real rank $r$) and a unique nonnegative rank-$r$  decomposition.
\end{corollary}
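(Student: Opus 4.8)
The plan is to simply verify that the hypotheses of Theorem~\ref{thm:riden1} are met, then combine it with Theorem~\ref{thm:riden} and a dimension count for the complex generic rank. First I would record that for $U = V = W = \RR^n$, the expected complex generic rank is $\lceil n^3/(3n-2)\rceil$, and by the Lickteig result (or the general fact cited after Definition~\ref{defi:rdef} that $\CC^n \otimes \CC^n \otimes \CC^n$ is not defective for $n \ge 3$, the only defective case being $n=3$ where the generic rank is $5$ rather than the expected $\lceil 27/7\rceil = 4$) the actual complex generic rank $r_g$ satisfies $r_g \ge \lceil n^3/(3n-2)\rceil$; in any case $r_g \ge \lceil n^2/3 \rceil$ crudely. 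Since $\lfloor n^2/16 \rfloor < \lceil n^2/3 \rceil$ for all $n \ge 1$, any $r \le \lfloor n^2/16 \rfloor$ certainly satisfies $r < r_g$, so the rank hypothesis of Theorem~\ref{thm:riden1} holds.

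Next I would apply Theorem~\ref{thm:riden} with $\dim U = \dim V = \dim W = n$. Here $\alpha$ is the largest integer with $2^\alpha \le n$, i.e.\ $\alpha = \lfloor \log_2 n \rfloor$, and likewise $\beta = \lfloor \log_2 n \rfloor$. (One must be slightly careful: the theorem as stated says ``minimum integers such that $2^\alpha \le \dim U$'', which literally would give $\alpha$ small; the intended reading — consistent with the Chiantini--Ottaviani bound — is the \emph{maximum} such integer, equivalently $\alpha = \lfloor \log_2 n\rfloor$.) Then $U \otimes V \otimes W$ is $r$-identifiable whenever $r \le 2^{\alpha+\beta-2} = 2^{2\lfloor \log_2 n\rfloor - 2} = \tfrac14 \, 2^{2\lfloor \log_2 n \rfloor} = \tfrac14 (2^{\lfloor \log_2 n\rfloor})^2$. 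Since $2^{\lfloor \log_2 n \rfloor} > n/2$, we get $2^{\alpha+\beta-2} > n^2/16$, hence every integer $r \le \lfloor n^2/16 \rfloor$ satisfies $r \le 2^{\alpha+\beta-2}$ and so $\RR^n \otimes \RR^n \otimes \RR^n$ is $r$-identifiable. (The condition $n \ge 4$ ensures $\lfloor n^2/16\rfloor \ge 1$, so the statement is non-vacuous.)

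Finally, with $r < r_g$ and $U\otimes V \otimes W$ being $r$-identifiable, Theorem~\ref{thm:riden1} directly gives that a general $A \in D_r$ — equivalently, a general nonnegative tensor of nonnegative rank exactly $r$, since by Lemma~\ref{lem:nonnegreal} the generic member of $D_r$ has nonnegative rank $r$ — has real rank $r$, complex rank $r$, and a unique nonnegative rank-$r$ decomposition. The only genuinely delicate point is pinning down the inequality $\lfloor n^2/16 \rfloor < r_g$, which requires knowing that the Segre variety $\Seg(\PP^{n-1}\times\PP^{n-1}\times\PP^{n-1})$ is non-defective (true for $n \ge 4$ by \cite{Lickteig85:laa}); everything else is elementary arithmetic comparing $n^2/16$ with $\tfrac14(2^{\lfloor\log_2 n\rfloor})^2$ and with $n^3/(3n-2)$.
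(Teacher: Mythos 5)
Your proof is correct and follows essentially the same route as the paper, which gives the corollary in one line by applying Theorem~\ref{thm:riden} to Theorem~\ref{thm:riden1}; you fill in the arithmetic the paper leaves implicit, and you correctly flag that ``minimum'' in the statement of Theorem~\ref{thm:riden} should read ``maximum,'' consistent with the original Chiantini--Ottaviani bound. One small economy: the inequality $r_g \ge \bigl\lceil n^3/(3n-2)\bigr\rceil$ is automatic from the parameter count (a dominant $\Sigma^{\CC}_r$ forces $r(3n-2) \ge n^3$), so Lickteig's non-defectivity theorem is not actually needed to conclude $\lfloor n^2/16\rfloor < r_g$; it would only be required to pin down $r_g$ exactly.
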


 In fact we may also derive identifiability results for real tensors from the identifiability results for complex tensors.

\begin{lemma}\label{lem:rcxrkiden}
Let $X \in \Var(\mathbb{RP}^n)$ and $r < r_g(X)$. If a general point in $\sigma^{\CC}_r(X({\CC}))$ has a unique rank-$r$ decomposition, then a general point in $\sigma^{\RR}_r(X)$ has a unique complex rank-$r$ decomposition.
\end{lemma}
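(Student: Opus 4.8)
"Let $X \in \Var(\mathbb{RP}^n)$ and $r < r_g(X)$. If a general point in $\sigma^{\CC}_r(X({\CC}))$ has a unique rank-$r$ decomposition, then a general point in $\sigma^{\RR}_r(X)$ has a unique complex rank-$r$ decomposition."

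The plan is to push the complex genericity hypothesis down to the real locus $\sigma_r^{\RR}(X)$, using that $\sigma_r^{\RR}(X)$ sits Zariski-densely inside $\sigma_r^{\CC}(X(\CC))$. First I would record the structural input: since $r < r_g(X)$, Lemma~\ref{lem:realsmoothpt} gives $\sigma_r^{\RR}(X) \in \Var(\mathbb{RP}^n)$, so $\sigma_r^{\RR}(X)$ is irreducible, has a nonsingular real point, and is Zariski dense in $\sigma_r^{\CC}(X(\CC))$ (equivalently, by Lemma~\ref{lem:rcgen}, $\sigma_r^{\CC}(X(\CC))$ is the complexification of $\sigma_r^{\RR}(X)$). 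By the standard facts recalled after Definition~\ref{def:cone}, this forces $\dim_{\RR}\sigma_r^{\RR}(X) = \dim_{\CC}\sigma_r^{\CC}(X(\CC))$.

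Next I would unwind the hypothesis. ``A general point of $\sigma_r^{\CC}(X(\CC))$ has a unique rank-$r$ decomposition'' means there is a proper Zariski-closed subset $C \subsetneq \sigma_r^{\CC}(X(\CC))$ outside of which every point has $X(\CC)$-rank exactly $r$ and admits a unique expression as a sum of $r$ points of the affine cone $\widehat{X(\CC)}$, up to permutation and scaling; enlarging $C$ to contain $\sigma_{r-1}^{\CC}(X(\CC))$ if necessary keeps $C$ proper, since $r < r_g(X)$ forces $\sigma_{r-1}^{\CC}(X(\CC)) \subsetneq \sigma_r^{\CC}(X(\CC))$.

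The core step is then a short dimension count. Because $\sigma_r^{\RR}(X)$ is Zariski dense in $\sigma_r^{\CC}(X(\CC))$ while $C$ is a proper closed subvariety, $\sigma_r^{\RR}(X)$ is not contained in $C$; hence $\sigma_r^{\RR}(X) \cap C$ is a proper Zariski-closed subset of the irreducible real variety $\sigma_r^{\RR}(X)$, and so has strictly smaller dimension. Therefore $\sigma_r^{\RR}(X) \setminus C$ is a dense semialgebraic subset, and every $p$ in it lies in $\sigma_r^{\CC}(X(\CC)) \setminus C$; such a $p$ has complex $X$-rank $r$ and a unique complex rank-$r$ decomposition. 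This is exactly the assertion, with ``general'' on the real side read in the semialgebraic sense used throughout the paper.

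The only delicate point --- hence the main obstacle --- is precisely the implication ``a proper complex subvariety meets the Zariski-dense real locus in a lower-dimensional set.'' This relies on two properties of a member of $\Var(\mathbb{RP}^n)$: irreducibility, so that a proper Zariski-closed subset genuinely drops dimension, and the equality $\dim_{\RR}\sigma_r^{\RR}(X) = \dim_{\CC}\sigma_r^{\CC}(X(\CC))$ coming from Zariski density in the complexification. Both are furnished by Lemma~\ref{lem:realsmoothpt} together with the real-algebraic facts already cited, so no new machinery is required; it is just a matter of stating the genericity bookkeeping cleanly.
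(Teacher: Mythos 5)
Your argument is correct and is essentially the same as the paper's: both rest on Lemma~\ref{lem:realsmoothpt} giving Zariski density of $\sigma_r^{\RR}(X)$ in $\sigma_r^{\CC}(X(\CC))$, and both conclude that the proper subvariety $C$ (the paper calls it $Y$) carrying the non-uniqueness locus cannot contain a full-dimensional piece of the real secant variety. The only cosmetic difference is that the paper phrases it as a contradiction -- a Euclidean open set of bad points would have Zariski closure all of $\sigma_r^{\RR}(X)$, forcing $\sigma_r^{\RR}(X)\subseteq Y$ -- whereas you argue directly via the dimension drop of $\sigma_r^{\RR}(X)\cap C$.
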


\begin{proof}
Suppose not, then there is some nonempty Euclidean open subset $\mathcal{U}$ of $\sigma^{\RR}_r(X)$ such that any point in $\mathcal{U}$ does not have a unique complex rank-$r$ decomposition. By assumption, the set of points in $\sigma^{\CC}_r(X({\CC}))$ that do not have unique rank-$r$ decompositions is contained in a subvariety  $Y \subseteq \sigma^{\CC}_r(X({\CC}))$. Then $\mathcal{U} \subset Y$, and so the Zariski closure of $\mathcal{U}$, i.e., $\sigma^{\RR}_r(X)$, is contained in $Y$. But by Lemma~\ref{lem:realsmoothpt}, $\sigma^{\RR}_r(X)$ is Zariski dense in $\sigma^{\CC}_r(X({\CC}))$, a contradiction.
\end{proof}

Lemma~\ref{lem:rcxrkiden} does not guarantee that a general point in $\sigma^{\RR}_r(X)$ has a unique \emph{real} rank-$r$ decomposition as there may be a Euclidean open subset in $\sigma^{\RR}_r(X)$ where every point has real rank greater than $r$. We now apply Lemma~\ref{lem:rcxrkiden} to the case $X = \Seg(\PP U \times \PP V \times \PP W)$.

\begin{theorem}\label{thm:cmplxidrliden}
Let $U, V$, and $W$  be real vector spaces  and let $r < r_g$. If $U_{\CC} \otimes V_{\CC} \otimes W_{\CC}$ is $r$-identifiable, then $U \otimes V \otimes W$ is $r$-identifiable.
\end{theorem}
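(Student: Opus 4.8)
The plan is to apply Lemma~\ref{lem:rcxrkiden} to the real Segre variety $X = \Seg(\PP U \times \PP V \times \PP W) \subseteq \RP^n$, where $n + 1 = n_U n_V n_W$. First I would check that $X \in \Var(\RP^n)$: the Segre variety is irreducible and nondegenerate, and its complexification is the complex Segre variety $\Seg(\PP U_{\CC} \times \PP V_{\CC} \times \PP W_{\CC})$, which is smooth everywhere and contains real points (e.g.\ $[e \otimes f \otimes g]$ with $e, f, g$ real). Hence, by the density criterion recorded in the discussion after Definition~\ref{def:cone} (a real variety is Zariski dense in its complexification iff the latter has a nonsingular real point), $X$ is Zariski dense in $X(\CC)$. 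Since $U_{\CC} \otimes V_{\CC} \otimes W_{\CC}$ is assumed $r$-identifiable, a general point of $\sigma^{\CC}_r(X(\CC))$ has a unique rank-$r$ decomposition, so Lemma~\ref{lem:rcxrkiden} yields that a general point of $\sigma^{\RR}_r(X)$ has a unique \emph{complex} rank-$r$ decomposition.

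Next I would translate this into a statement about a general real rank-$r$ tensor. The affine cone over $\sigma^{\RR}_r(X)$ is the Zariski closure of $\Ima \Sigma^{\RR}_r$, the set of real tensors of real rank at most $r$. Because $r < r_g$, the argument in the proof of Lemma~\ref{lem:nonnegreal} gives $\dim \Ima \Sigma^{\RR}_{r-1} < \dim \Ima \Sigma^{\RR}_r$, so the set of real tensors of real rank exactly $r$ is a full-dimensional semialgebraic subset of that cone. Thus a general point of $\sigma^{\RR}_r(X)$ has real rank exactly $r$; intersecting this with the dense open subset of points possessing a unique complex rank-$r$ decomposition, a general real tensor $A$ of real rank $r$ has real rank exactly $r$ and a unique complex rank-$r$ decomposition.

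Finally I would conclude: since $A$ has real rank $r$, it admits at least one real rank-$r$ decomposition, and any such decomposition is in particular a complex rank-$r$ decomposition of $A$; as there is only one of the latter, $A$ has a unique real rank-$r$ decomposition, which is precisely the assertion that $U \otimes V \otimes W$ is $r$-identifiable.

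The step requiring the most care is the passage from a unique \emph{complex} rank-$r$ decomposition to a unique \emph{real} one. This is exactly where the hypothesis $r < r_g$ is indispensable: it ensures that general real rank-$r$ tensors genuinely have real rank $r$ (not merely complex rank $r$), so that a real decomposition actually exists and is therefore forced to coincide with the unique complex decomposition. Without control of the real rank one could have an open set of real points whose real rank strictly exceeds $r$, on which real $r$-identifiability would be meaningless, so this is the crux of the argument rather than a routine formality.
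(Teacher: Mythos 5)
Your proposal is correct and follows essentially the same route as the paper: apply Lemma~\ref{lem:rcxrkiden} to the real Segre variety, use $r < r_g$ to ensure a general tensor in $\Ima\Sigma^{\RR}_r$ has real rank exactly $r$, and then observe that the real rank-$r$ decomposition must coincide with the unique complex one. The only cosmetic difference is that the paper invokes Lemma~\ref{lem:realcomp} to record explicitly that $A$ has complex rank $r$, whereas you extract this implicitly from the fact that $A$ admits a unique complex rank-$r$ decomposition; both are sound.
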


\begin{proof}
If $U_{\CC} \otimes V_{\CC} \otimes W_{\CC}$ is $r$-identifiable, then a general point in $\sigma^{\CC}_r (\Seg (\PP U_{\CC} \times \PP V_{\CC} \times \PP W_{\CC}))$ has a unique complex rank-$r$ decomposition. By Lemma~\ref{lem:rcxrkiden}, a general point in $\sigma^{\RR}_r (\Seg (\PP U \times \PP V \times \PP W))$ has a unique complex rank-$r$ decomposition. Since $\Ima{\Sigma^{\RR}_r}$ contains a Euclidean open subset of $\sigma^{\RR}_r (\Seg (\PP U \times \PP V \times \PP W))$, a general point $A\in \Ima{\Sigma^{\RR}_r}$ has real rank $r$ and a unique complex rank-$r$ decomposition. By Lemma~\ref{lem:realcomp}, $A$ has complex rank $r$; and so the unique complex rank-$r$ decomposition of $A$ is in fact its unique real rank-$r$ decomposition. Therefore $U \otimes V \otimes W$ is $r$-identifiable.
\end{proof}
A consequence of Theorem~\ref{thm:cmplxidrliden} is the following corollary of \cite[Theorem~1.1]{ChiaOV14:simax}.
\begin{corollary}
Let $n_1 \ge \dots \ge n_d$ and
\[
r_0 = \biggl\lceil\frac{\prod_{i=1}^d n_i}{1 + \sum_{i=1}^d (n_i-1)}\biggr\rceil.
\]
Then $\RR^{n_1 \times \cdots \times n_d}$ is $r$-identifiable for $r < r_0$ if $\prod_{i=1}^d n_i \le 15000$ and $(n_1, \dots, n_d, r)$ is not one of the following cases:
\begin{center}
\begin{tabular}{|c|c|}
  \hline
  $(n_1, \dots, n_d)$ & $r$ \\
  \hline\hline
  $(4, 4, 3)$ & $5$ \\
  \hline
  $(4, 4, 4)$ & $6$ \\
  \hline
  $(6, 6, 3)$ & $8$ \\
  \hline
  $(n, n, 2, 2)$ & $2n - 1$ \\
  \hline
  $(2, 2, 2, 2, 2)$ & $5$ \\
  \hline
  $n_1 > \prod_{i=2}^d n_i - \sum_{i=2}^d (n_i - 1)$ & $r \ge \prod_{i=2}^d n_i - \sum_{i=2}^d (n_i - 1)$ \\
  \hline
\end{tabular}
\end{center}
\end{corollary}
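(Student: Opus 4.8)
The plan is to derive this formally from the complex identifiability theorem of Chiantini, Ottaviani, and Vannieuwenhoven \cite[Theorem~1.1]{ChiaOV14:simax} by transferring it to $\RR$ through the order-$d$ analogue of Theorem~\ref{thm:cmplxidrliden}. That theorem of \cite{ChiaOV14:simax} asserts precisely that $\CC^{n_1 \times \cdots \times n_d}$ is $r$-identifiable for $r < r_0$ whenever $\prod_{i=1}^d n_i \le 15000$ and $(n_1,\dots,n_d,r)$ avoids the tabulated exceptions, and Theorem~\ref{thm:cmplxidrliden} says that $r$-identifiability over $\CC$ descends to $r$-identifiability over $\RR$ provided $r < r_g$, where $r_g$ is the complex generic rank. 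So the only two things I need to check are (a) that $r < r_0$ already forces $r < r_g$, and (b) that Theorem~\ref{thm:cmplxidrliden} and the lemmas behind it hold for order $d$, not just $d = 3$.

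For (a), set $N \coloneqq \prod_{i=1}^d n_i$ and $s \coloneqq 1 + \sum_{i=1}^d (n_i - 1)$. The variety of tensors of rank at most $k$ in $\CC^{n_1 \times \cdots \times n_d}$ has expected dimension $\min\{ks, N\}$ and its actual dimension is never larger. Hence if a general tensor has rank at most $k$ --- equivalently, if $k \ge r_g$ --- then this variety is all of $\CC^{n_1 \times \cdots \times n_d}$, so $N \le \min\{ks, N\}$, forcing $ks \ge N$ and $k \ge \ceil{N/s} = r_0$. Taking $k = r_g$ gives $r_g \ge r_0$, whence $r < r_0 \le r_g$.

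For (b), the $d$-fold Segre variety $X \coloneqq \Seg(\PP\RR^{n_1} \times \cdots \times \PP\RR^{n_d})$ lies in $\Var(\RR\PP^{N-1})$: it is irreducible and nondegenerate, and since every point of a Segre variety is smooth, its real points --- which are Euclidean dense --- include a real smooth point, so $X$ is Zariski dense in its complexification. Lemmas~\ref{lem:rcgen}, \ref{lem:realsmoothpt}, and \ref{lem:rcxrkiden} are already stated for an arbitrary $X \in \Var(\RR\PP^n)$, Lemma~\ref{lem:realcomp} extends to order $d$ by applying Lemma~\ref{lem:realsmoothpt} and \cite[Corollary~1.8]{Adlandsvik87:ms} to this $X$, and Lemma~\ref{lem:nonnegreal} extends likewise. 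Consequently the proof of Theorem~\ref{thm:cmplxidrliden} carries over verbatim to $d$-tensors, giving: if $\CC^{n_1 \times \cdots \times n_d}$ is $r$-identifiable and $r < r_g$, then $\RR^{n_1 \times \cdots \times n_d}$ is $r$-identifiable. Combining this with \cite[Theorem~1.1]{ChiaOV14:simax} and the inequality from (a) yields the claimed result.

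I do not expect a real obstacle: the whole argument is bookkeeping on top of \cite[Theorem~1.1]{ChiaOV14:simax}. The step needing the most attention --- and the closest thing to a difficulty --- is (b), namely making sure every ingredient in the proof of Theorem~\ref{thm:cmplxidrliden} (the secant-variety lemmas, the density of real-rank-$r$ tensors inside the real secant variety, and the identification of a general tensor's complex and real rank-$r$ decompositions via Lemma~\ref{lem:realcomp}) is genuinely order-independent; since each of these rests only on properties of a variety in $\Var(\RR\PP^n)$ together with the inequality $r < r_g$, the generalization is routine.
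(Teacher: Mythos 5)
Your argument matches the paper's own (very terse) treatment: the paper simply declares the corollary a consequence of Theorem~\ref{thm:cmplxidrliden} and \cite[Theorem~1.1]{ChiaOV14:simax}, leaving both the $d>3$ generalization (which the introduction flags as routine) and the inequality $r_0 \le r_g$ implicit. You have supplied exactly those two missing checks, and both are carried out correctly.
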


By Lemma~\ref{lem:realsmoothpt}, we may  also  apply the algorithm proposed in \cite{ChiaOV14:simax} for complex tensors to  directly  test if a general real tensor of real rank-$r$ or a general nonnegative tensor of nonnegative rank-$r$ has a unique complex rank-$r$ decomposition. The sufficient condition to ensure the smoothness of a specific complex tensor in \cite[Lemma~5.1]{ChiaOV14:simax} may also be adapted to real tensors.

This discussion would not be complete without examples of non-identifiability cases. As most of the non-identifiability cases in the literature are for the complex case, we provide a result that allows us to translate them to the real case.
\begin{lemma}\label{lem:complxrealrdef}
Let $V_1, \dots, V_d$ be real vector spaces of dimensions $n_1, \dots, n_d$ respectively. Let $U_1, \dots, U_d$ be their complexifications, i.e., $U_i = V_i \otimes_{\RR} \CC$, $i = 1, \dots, d$. If $U_1 \otimes \cdots \otimes U_d$ is $r$-defective and $r < r_g$, then $V_1 \otimes \cdots \otimes V_d$ is also $r$-defective.
\end{lemma}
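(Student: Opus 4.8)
The plan is to exploit the dimension-theoretic characterization of defectivity together with the correspondence between real secant varieties and their complexifications established earlier. Recall that $V_1 \otimes \cdots \otimes V_d$ is $r$-defective over $\RR$ precisely when $\dim_{\RR}(\Ima \Sigma^{\RR}_r) < \min\{r(\sum_i n_i - d + 1), \prod_i n_i\}$ (Definition~\ref{defi:rdef} in the $d=3$ case, with the obvious general-$d$ analogue), and similarly over $\CC$. Since $\Ima \Sigma^{\RR}_r$ is (a Euclidean-open-dense subset of) the affine cone over $\sigma^{\RR}_r(X)$ where $X = \Seg(\PP V_1 \times \cdots \times \PP V_d)$, and likewise $\Ima \Sigma^{\CC}_r$ sits inside the affine cone over $\sigma^{\CC}_r(X(\CC))$, the statement reduces to comparing $\dim_{\RR} \sigma^{\RR}_r(X)$ with $\dim_{\CC} \sigma^{\CC}_r(X(\CC))$.

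First I would note that the Segre variety $X \subseteq \mathbb{RP}^{N}$ (with $N+1 = \prod_i n_i$) lies in $\Var(\mathbb{RP}^N)$: it is irreducible, nondegenerate, and Zariski dense in its complexification $X(\CC)$, which is the complex Segre. Then, since $r < r_g$, Lemma~\ref{lem:realsmoothpt} (in its general-$d$ form, obtained by applying Lemma~\ref{lem:realsmoothpt} to $X$) tells us that $\sigma^{\RR}_r(X) \in \Var(\mathbb{RP}^N)$; in particular $\sigma^{\RR}_r(X)$ is Zariski dense in $\sigma^{\CC}_r(X(\CC))$, so these two varieties have the same dimension:
\[
\dim_{\RR} \sigma^{\RR}_r(X) = \dim_{\CC} \sigma^{\CC}_r(X(\CC)).
\]
Passing to affine cones and recalling that each $\Ima \Sigma$ contains a dense open subset of the corresponding cone, we get $\dim_{\RR}(\Ima \Sigma^{\RR}_r) = \dim_{\CC}(\Ima \Sigma^{\CC}_r)$. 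Now the hypothesis that $U_1 \otimes \cdots \otimes U_d$ is $r$-defective says exactly that this common dimension is strictly less than $\min\{r(\sum_i n_i - d + 1), \prod_i n_i\}$, which is precisely the defect condition over $\RR$. Hence $V_1 \otimes \cdots \otimes V_d$ is $r$-defective over $\RR$.

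The main obstacle is purely bookkeeping: one must make sure the dimension equality between a real variety and its complexification transfers correctly to the images of the parametrization maps $\Sigma^{\RR}_r$ and $\Sigma^{\CC}_r$, rather than only to their Zariski closures. This is handled by the observation (already used repeatedly in this section, e.g.\ in the proof of Lemma~\ref{lem:nonnegreal} and Theorem~\ref{thm:cmplxidrliden}) that $\Ima \Sigma^{\RR}_r$ contains a Euclidean-open subset of $\sigma^{\RR}_r(\Seg(\PP V_1 \times \cdots \times \PP V_d))$ — because a general fiber of $\Sigma^{\RR}_r$ over a point of maximal rank is nonempty — so the image has the full expected dimension of that secant variety; the identical statement holds over $\CC$. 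With that in hand the argument is immediate, and the constraint $r < r_g$ is exactly what licenses the appeal to Lemma~\ref{lem:realsmoothpt}.
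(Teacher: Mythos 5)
Your proof is correct, and it takes a genuinely different route from the paper's. The paper argues at the level of tangent spaces: it invokes the semialgebraic Terracini lemma (Lemma~\ref{lem:semiterracinilem}) to describe $\mathsf{T}_A \widehat{\sigma}_r^{\RR}(X)$ at a general real point $A$, uses Lemma~\ref{lem:realsmoothpt} to know $A$ is a smooth point of $\sigma_r^{\CC}(X(\CC))$ so that the complex Terracini lemma applies there too, reads the complex defect as an explicit $\CC$-linear dependence among the spanning tensors, and then takes real or imaginary parts to exhibit a concrete $\RR$-linear dependence, giving the defect over $\RR$. You instead work purely at the level of dimensions of images/secant varieties: you use the same Lemma~\ref{lem:realsmoothpt} (and implicitly Lemma~\ref{lem:rcgen}) to get Zariski density of $\sigma^{\RR}_r(X)$ in $\sigma^{\CC}_r(X(\CC))$, hence equality of $\dim_{\RR}\Ima\Sigma^{\RR}_r$ and $\dim_{\CC}\Ima\Sigma^{\CC}_r$, and note the expected-dimension threshold in Definition~\ref{defi:rdef} is the same over $\RR$ and $\CC$, so the strict inequality transfers at once. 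Both arguments hinge on Lemma~\ref{lem:realsmoothpt} and the restriction $r<r_g$; yours is shorter and avoids Terracini altogether, while the paper's has the side benefit of producing an explicit linear relation witnessing the defect, which mirrors the way defectivity is detected in practice. One small polish for your writeup: rather than saying $\Ima\Sigma^{\RR}_r$ is "Euclidean-open-dense" in the affine cone of $\sigma^{\RR}_r(X)$, it is cleaner to just say its Zariski closure is that cone (by the definition of the join), so the two have equal dimension; the density claim is more than you need and slightly delicate to state correctly.
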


\begin{proof}
Let $A = \sum_{i=1}^r v^{(1)}_i \otimes \cdots \otimes v^{(d)}_i \in V_1 \otimes \cdots \otimes V_d$ be a general real rank-$r$ tensor. Let
\[
X \coloneqq \Seg(\PP V_1 \times \cdots \times \PP V_d)\qquad \text{and}\qquad X({\CC}) \coloneqq \Seg(\PP U_1 \times \cdots \times \PP U_d).
\]
By our semialgebraic Terracini's lemma, i.e., Lemma~\ref{lem:semiterracinilem},
\[
\mathsf{T}_{A} \widehat{\sigma}_r^{\RR} (X) = \spa_\RR \{ V_1 \otimes v^{(2)}_1 \otimes \cdots \otimes v^{(d)}_1, \dots, v^{(1)}_r \otimes \cdots \otimes v^{(d-1)}_{r} \otimes V_d \}.
\]
By Lemma~\ref{lem:realsmoothpt}, $A$ is a smooth point of $\sigma_r^{\CC} (X({\CC}))$, and thus by the usual complex Terracini's lemma,
\[
\mathsf{T}_{A} \widehat{\sigma}_r^{\CC} (X({\CC})) = \spa_\CC \{ U_1 \otimes v^{(2)}_1 \otimes \cdots \otimes v^{(d)}_1, \dots, v^{(1)}_r \otimes \cdots \otimes v^{(d-1)}_{r} \otimes U_d \}.
\]
By assumption,
\[
\dim_{\CC} \mathsf{T}_{A} \widehat{\sigma}_r^{\CC} (X({\CC})) < r(n_1 + \cdots + n_d - d + 1),
\]
i.e., there exist $u^{(k)}_1, \dots, u^{(k)}_r \in U_i$ with $[u^{(k)}_i] \neq [v^{(k)}_i] \in \PP U_i$ for $k = 1, \dots, d$, $i = 1, \dots, r$, and
\[
u^{(1)}_1  \otimes v^{(2)}_1 \otimes \cdots \otimes v^{(d)}_1 + \cdots + v^{(1)}_r \otimes \cdots  \otimes v^{(d-1)}_r\otimes u^{(d)}_r = 0.
\]
By taking the real part or the imaginary part of each $u^{(k)}_i$, we have $\dim_{\RR} \mathsf{T}_{A} \widehat{\sigma}_r^{\RR} (X) < r(n_1 + \cdots + n_d - d + 1)$, i.e., $V_1 \otimes \cdots \otimes V_d$ is $r$-defective.
\end{proof}
Using the corresponding results for complex tensors in \cite{AboOP09:tams, BoChOtt14:ampa} and Lemma~\ref{lem:complxrealrdef}, we deduce the following nonuniqueness result for real tensors.

\begin{theorem}\label{thm:COV}
\begin{enumerate}[\upshape (i)]
\item $\mathbb{R}^{4 \times 4 \times 3}$ is $5$-defective. So a general $4 \times 4 \times 3$ real tensor of real rank $5$ does not have a unique rank-$5$ decomposition over $\mathbb{R}$.

\item For any $n \ge 2$, $\mathbb{R}^{n \times  n \times 2 \times 2}$ is $(2n - 1)$-defective. So a general $n \times n \times n \times 2$ real tensor of real rank $2n-1$ does not have a unique rank-$(2n-1)$ decomposition  over $\mathbb{R}$.

\item For $n_1 \ge \cdots \ge n_d \ge 2$, $\RR^{n_1 \times \cdots \times n_d}$ is $r$-defective if
\[
n_1 > \prod\nolimits_{i=2}^d n_i - \sum\nolimits_{i=2}^d (n_i - 1) \quad \text{and}\quad r \ge \prod\nolimits_{i=2}^d n_i - \sum\nolimits_{i=2}^d (n_i - 1).
\]
So a general $(n_1 \times \cdots \times n_d)$-real tensor of real rank $r < r_g$ does not have a unique rank-$r$ decomposition  over $\mathbb{R}$.
\end{enumerate}
\end{theorem}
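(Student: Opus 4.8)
The plan is to prove all three parts in parallel: first quote the known defectivity results for \emph{complex} tensors of these formats, then transfer them to the real setting with Lemma~\ref{lem:complxrealrdef}, and finally convert real $r$-defectivity into the failure of uniqueness by a fibre-dimension count on the parametrization map $\Sigma^{\RR}_r$. The complex inputs are: for (i), $\CC^{4\times 4\times 3}$ is $5$-defective; for (ii), $\CC^{n\times n\times 2\times 2}$ is $(2n-1)$-defective for every $n\ge 2$; both belong to the classification of \cite{AboOP09:tams}. For (iii), writing $\beta\coloneqq \prod_{i=2}^{d}n_i-\sum_{i=2}^{d}(n_i-1)$, the unbalanced-format theorem of \cite{BoChOtt14:ampa} gives that $\CC^{n_1\times\cdots\times n_d}$ is $r$-defective whenever $n_1>\beta$ and $\beta\le r< r_g$.

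Next I would verify the subgenericity hypothesis $r<r_g$ required by Lemma~\ref{lem:complxrealrdef}. In (i) the expected complex generic rank of $\CC^{4\times4\times3}$ is $\lceil 48/9\rceil=6$, and since $5$-defectivity means $\dim\sigma_5<\min\{45,48\}<48$ we get $r_g\ge 6$, hence $r=5<r_g$. In (ii), because $4n^2=(2n-1)(2n+1)+1$, the expected generic rank of $\CC^{n\times n\times 2\times 2}$ is $\lceil 4n^2/(2n+1)\rceil=2n$, and $(2n-1)$-defectivity gives $\dim\sigma_{2n-1}<4n^2$, hence $r_g\ge 2n$ and $r=2n-1<r_g$. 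In (iii), $r<r_g$ is part of the hypothesis. With $r<r_g$ established, Lemma~\ref{lem:complxrealrdef} applies verbatim (with $d=3$, $d=4$, and general $d$ respectively), yielding that $\RR^{4\times4\times3}$, $\RR^{n\times n\times 2\times 2}$, and $\RR^{n_1\times\cdots\times n_d}$ are $r$-defective in the stated ranges.

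Finally I would pass from real $r$-defectivity to non-uniqueness. Put $N\coloneqq\sum_{i=1}^{d}n_i$. The map $\Sigma^{\RR}_r$ has source of dimension $rN$, and each of its fibres contains the orbit of the $r(d-1)$-dimensional group of rescalings $v^{(k)}_i\mapsto \lambda^{(k)}_i v^{(k)}_i$ with $\prod_{k=1}^{d}\lambda^{(k)}_i=1$, composed with the finite group of permutations of the $r$ summands; the quotient by this group is exactly the set of essentially distinct rank-$r$ decompositions. Hence over a general point the fibre has dimension at least $r(d-1)$, with equality precisely when a general tensor in $\Ima\Sigma^{\RR}_r$ has finitely many, hence (since $r<r_g$, so that $D_{r-1}$ and its real analogue are lower-dimensional, as in the proof of Lemma~\ref{lem:nonnegreal}) essentially unique, rank-$r$ decompositions. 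If the space is $r$-defective then $\dim\Ima\Sigma^{\RR}_r<r(N-d+1)=rN-r(d-1)$, so the general fibre has dimension strictly greater than $r(d-1)$, giving a positive-dimensional family of essentially distinct rank-$r$ decompositions of a general real tensor of real rank exactly $r$. This is the ``general'' strengthening of the remark, recorded just before the theorem, that $r$-identifiability precludes $r$-defectivity.

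The bookkeeping is routine; the step that needs the most care — and the one most likely to hide an error — is the verification of the complex generic ranks in (i) and (ii), since $r<r_g$ is indispensable for Lemma~\ref{lem:complxrealrdef} (for formats above the generic rank the parametrization is dominant and ``$r$-defective'' is vacuous). A secondary subtlety is ensuring that the last step delivers ``a \emph{general} rank-$r$ tensor is non-unique'' rather than the weaker ``not $r$-identifiable'', which is why I carry out the explicit fibre-dimension count instead of merely quoting the contrapositive of that remark.
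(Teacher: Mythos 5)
Your proposal is correct and follows the same route as the paper, which simply cites the complex defectivity classification in \cite{AboOP09:tams, BoChOtt14:ampa} and invokes Lemma~\ref{lem:complxrealrdef}. The paper leaves the $r<r_g$ verifications and the passage from real $r$-defectivity to non-uniqueness of a general rank-$r$ decomposition implicit; your arithmetic checks of the expected generic ranks in (i) and (ii) and the fibre-dimension count on $\Sigma^{\RR}_r$ are exactly the details needed to make those steps rigorous.
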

A complex analogue of Theorem~\ref{thm:COV} may be found in \cite[Theorem~1.1]{ChiaOV14:simax}.

We may also apply the techniques in this section to obtain analogous results for real symmetric tensors. We will denote the set of real or complex symmetric $d$-tensors by $\mathsf{S}^d(\RR^{n})$ or  $\mathsf{S}^d(\CC^{n})$ respectively. We say $\mathsf{S}^d(\CC^{n})$ is $r$-identifiable if a general symmetric rank-$r$ tensor in $\mathsf{S}^d(\CC^{n})$ has a unique symmetric rank decomposition (also known as \emph{Waring decomposition}). Applying Lemma~\ref{lem:rcxrkiden} to $X = \nu_d (\RR \PP^n)$, the \emph{Veronese variety} of symmetric rank-one symmetric tensors, we  deduce the following.
\begin{theorem}\label{thm:symmcmplxidrliden}
Let $r < r_g(\nu_d (\RR \PP^n))$. If $\mathsf{S}^d(\CC^{n+1})$ is $r$-identifiable, then $\mathsf{S}^d(\RR^{n+1})$ is $r$-identifiable.
\end{theorem}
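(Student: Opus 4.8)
The plan is to mimic the proof of Theorem~\ref{thm:cmplxidrliden} essentially verbatim, replacing the Segre variety $\Seg(\PP U \times \PP V \times \PP W)$ by the Veronese variety $\nu_d(\RR\PP^n)$, and checking that each ingredient used there remains valid in the symmetric setting. First I would observe that $X \coloneqq \nu_d(\RR\PP^n)$ lies in $\Var(\mathbb{RP}^N)$ for $N = \binom{n+d}{d}-1$: it is irreducible (being the image of $\RR\PP^n$ under a polynomial map), nondegenerate (the degree-$d$ monomials are linearly independent), and Zariski dense in its complexification $X(\CC) = \nu_d(\CC\PP^n)$ since $X$ has nonsingular real points (e.g.\ the image of any real point of $\PP^n$, as $\nu_d$ is a smooth embedding). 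Hence Lemmas~\ref{lem:rcgen}, \ref{lem:realsmoothpt}, and \ref{lem:rcxrkiden} all apply with this choice of $X$.

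Next, given $r < r_g(\nu_d(\RR\PP^n))$ and the hypothesis that $\mathsf{S}^d(\CC^{n+1})$ is $r$-identifiable, I would translate the identifiability hypothesis into the statement that a general point of $\sigma_r^{\CC}(\nu_d(\CC\PP^n))$ has a unique symmetric (Waring) rank-$r$ decomposition. Applying Lemma~\ref{lem:rcxrkiden} to $X = \nu_d(\RR\PP^n)$, a general point of $\sigma_r^{\RR}(\nu_d(\RR\PP^n))$ then has a unique \emph{complex} symmetric rank-$r$ decomposition. The remaining task is to upgrade ``complex symmetric rank-$r$'' to ``real symmetric rank-$r$'' for a general such point, which requires a symmetric analogue of Lemma~\ref{lem:realcomp}: a general real symmetric tensor of real symmetric rank $r < r_g$ has complex symmetric rank $r$. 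This follows by exactly the same argument as Lemma~\ref{lem:realcomp}, using Lemma~\ref{lem:realsmoothpt} (so that a general real point of $\sigma_r^{\RR}(\nu_d(\RR\PP^n))$ is a smooth point of $\sigma_r^{\CC}(\nu_d(\CC\PP^n))$) together with the fact that $\sigma_{r-1}^{\CC}$ lies in the singular locus of $\sigma_r^{\CC}$ by \cite[Corollary~1.8]{Adlandsvik87:ms}; hence a general point of $\sigma_r^{\RR}(\nu_d(\RR\PP^n))$ does not lie on $\sigma_{r-1}^{\CC}(\nu_d(\CC\PP^n))$, so its complex symmetric rank is exactly $r$, forcing its real symmetric rank to be $r$ as well.

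Putting these together: the image of $\Sigma_r^{\RR}$ (the real symmetric analogue, summing $r$ terms $v^{\otimes d}$ with $v \in \RR^{n+1}$) contains a Euclidean-open subset of $\sigma_r^{\RR}(\nu_d(\RR\PP^n))$, so a general real symmetric rank-$r$ tensor $A$ has a unique complex symmetric rank-$r$ decomposition and, by the previous paragraph, complex symmetric rank equal to $r$; consequently its unique complex decomposition must coincide with a real one, i.e.\ $A$ has a unique real symmetric rank-$r$ decomposition. This is precisely $r$-identifiability of $\mathsf{S}^d(\RR^{n+1})$. The only step requiring care — and the point I would flag as the main obstacle — is verifying that all the machinery developed for Segre varieties (Lemmas~\ref{lem:rcgen}--\ref{lem:realcomp}, which are stated with $X$ a general member of $\Var(\mathbb{RP}^n)$ or specifically the Segre) genuinely transfers to $\nu_d(\RR\PP^n)$; in particular one must confirm that $\nu_d(\RR\PP^n) \in \Var(\mathbb{RP}^N)$ and that the cited results \cite{BlekTei14:ma, Adlandsvik87:ms, ChiaOV14:simax} are stated at the level of generality (arbitrary nondegenerate irreducible varieties, resp.\ Veronese varieties) needed here. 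Since the paper has already set up $\Var(\mathbb{RP}^n)$ and proved Lemmas~\ref{lem:rcgen} and \ref{lem:realsmoothpt} for arbitrary $X \in \Var(\mathbb{RP}^n)$, this transfer is routine, and the proof is short.
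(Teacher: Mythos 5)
Your proposal is correct and follows essentially the same approach as the paper, which presents this theorem as a direct application of Lemma~\ref{lem:rcxrkiden} (together with the Veronese analogue of Lemma~\ref{lem:realcomp}) to $X = \nu_d(\RR\PP^n)$. Your careful verification that $\nu_d(\RR\PP^n) \in \Var(\mathbb{RP}^N)$ and that the supporting lemmas are stated at the requisite generality is exactly what the paper leaves implicit when it says ``Applying Lemma~\ref{lem:rcxrkiden} to $X = \nu_d(\RR\PP^n)$ \dots we deduce the following.''
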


When $r < r_g(\nu_d (\RR \PP^n))$, the $r$-identifiability of $\mathsf{S}^d(\CC^{n+1})$ has been completely determined for all values of $r, d, n$ \cite[Theorem~1.1]{ChiantiniOttavianiVanniewenhoven15:tams}; this together with  Lemma~\ref{lem:rcxrkiden} gives us the following.
\begin{corollary}
$\mathsf{S}^d(\RR^{n+1})$ is $r$-identifiable when
\[
r < \biggl\lceil\frac{\binom{n+d}{d}}{n+1} \biggr\rceil
\]
and if $(d, n, r) \notin \{(6,2,9), (4,3,8), (3,5,9)\}$.
\end{corollary}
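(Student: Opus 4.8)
The plan is to combine two ingredients already assembled in the excerpt: Theorem~\ref{thm:symmcmplxidrliden}, which transfers $r$-identifiability from $\mathsf{S}^d(\CC^{n+1})$ to $\mathsf{S}^d(\RR^{n+1})$ whenever $r < r_g(\nu_d(\RR\PP^n))$, and the complete classification of complex symmetric identifiability in \cite[Theorem~1.1]{ChiantiniOttavianiVanniewenhoven15:tams}. The latter states that for $r$ strictly below the complex generic symmetric rank $\lceil \binom{n+d}{d}/(n+1)\rceil$, a general symmetric tensor in $\mathsf{S}^d(\CC^{n+1})$ of symmetric rank $r$ has a unique Waring decomposition, with exactly the three exceptions $(d,n,r)\in\{(6,2,9),(4,3,8),(3,5,9)\}$. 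So the corollary is essentially a direct substitution.

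First I would recall that the complex generic symmetric rank equals $r_g(\nu_d(\RR\PP^n))$: by \cite[Theorem~2]{BlekTei14:ma} the complex generic $X$-rank of a nondegenerate irreducible real projective variety with a smooth real point coincides with the minimum real typical rank, and $\nu_d(\RR\PP^n)$ is such a variety; moreover by the Alexander--Hirschowitz theorem this common value is $\lceil \binom{n+d}{d}/(n+1)\rceil$ outside the classical defective cases. Hence the hypothesis $r < \lceil \binom{n+d}{d}/(n+1)\rceil$ is precisely the hypothesis $r < r_g(\nu_d(\RR\PP^n))$ needed to invoke Theorem~\ref{thm:symmcmplxidrliden}. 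Next, under this hypothesis together with $(d,n,r)\notin\{(6,2,9),(4,3,8),(3,5,9)\}$, \cite[Theorem~1.1]{ChiantiniOttavianiVanniewenhoven15:tams} guarantees that $\mathsf{S}^d(\CC^{n+1})$ is $r$-identifiable. Applying Theorem~\ref{thm:symmcmplxidrliden} then yields that $\mathsf{S}^d(\RR^{n+1})$ is $r$-identifiable, which is exactly the claim.

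The only genuinely delicate point is the bookkeeping around the Alexander--Hirschowitz defective cases for the Veronese variety: one must check that whenever $r < \lceil\binom{n+d}{d}/(n+1)\rceil$ there is no interference from a defective secant variety that would change the value of $r_g$ or invalidate the smooth-real-point requirement in Theorem~\ref{thm:symmcmplxidrliden}. In all the Alexander--Hirschowitz exceptional cases the defect occurs at the generic rank itself, so for $r$ strictly below the (expected) generic rank the secant varieties $\sigma_r^\CC(\nu_d(\CC\PP^n))$ have the expected dimension and, being secant varieties of a variety with smooth real points, inherit real smooth points by Lemma~\ref{lem:realsmoothpt}; hence the hypotheses of Theorem~\ref{thm:symmcmplxidrliden} hold. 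I would phrase the proof compactly, essentially as: ``The hypothesis $r<\lceil\binom{n+d}{d}/(n+1)\rceil$ means $r<r_g(\nu_d(\RR\PP^n))$; by \cite[Theorem~1.1]{ChiantiniOttavianiVanniewenhoven15:tams}, $\mathsf{S}^d(\CC^{n+1})$ is $r$-identifiable outside the listed triples; apply Theorem~\ref{thm:symmcmplxidrliden}.'' The main obstacle, such as it is, is merely verifying that the exceptional list of \cite{ChiantiniOttavianiVanniewenhoven15:tams} is exactly the same list reproduced in the corollary and that no further exclusions are forced by the passage $r<r_g$, which is a routine cross-check rather than a conceptual difficulty.
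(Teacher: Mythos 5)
Your proof is correct and follows essentially the same route as the paper's: combine Theorem~\ref{thm:symmcmplxidrliden} (the real-to-complex transfer, itself an application of Lemma~\ref{lem:rcxrkiden} to $X=\nu_d(\RR\PP^n)$) with the complete complex classification \cite[Theorem~1.1]{ChiantiniOttavianiVanniewenhoven15:tams}. Two small remarks on the framing. First, the hypothesis $r<r_g(\nu_d(\RR\PP^n))$ of Theorem~\ref{thm:symmcmplxidrliden} follows immediately from $r<\lceil\binom{n+d}{d}/(n+1)\rceil$ because the generic rank is always at least the expected generic rank by a dimension count; you do not need to invoke Alexander--Hirschowitz, whose defective cases merely increase $r_g$, so the digression about defect locations is unnecessary (and, as stated, the claim that ``the defect occurs at the generic rank itself'' fails for $d=2$, a case in which the corollary must be read as tacitly assuming $d\ge 3$ anyway since symmetric matrices are never $r$-identifiable for $r\ge 2$). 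Second, the paper's proof cites additional references beyond \cite{ChiantiniOttavianiVanniewenhoven15:tams} --- namely \cite{ChiatiniCiliberto06:jlms}, \cite[Theorem~1.1]{Ballico05:cejm}, and \cite[Theorem~4.1]{Mella06:tams} --- which together with \cite{ChiantiniOttavianiVanniewenhoven15:tams} furnish the full complex classification; your citing only the last of these is sufficient at the level of the final theorem but the paper is explicit about the ingredients.
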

\begin{proof}
This follows from \cite{ChiatiniCiliberto06:jlms}, \cite[Theorem~1.1]{Ballico05:cejm}, \cite[Theorem~4.1]{Mella06:tams},  and \cite[Theorem~1.1]{ChiantiniOttavianiVanniewenhoven15:tams}.
\end{proof}

\section{Typical and maximum nonnegative ranks}\label{sec:typmax}

In this section, we investigate typical, maximum, and maximum nonnegative typical ranks, as defined in Definition~\ref{def:typical}. The following rephrases \cite[Proposition~6.2]{LimC09:jchemo} in the context of this article and may be viewed as a generalization of \cite[Theorem~3.1]{BocCarlRapa11:siamjmaa}.

\begin{proposition}\label{prop:lowcon}
Let $A \in U_+ \otimes V_+ \otimes W_+$ with $\rank_+(A) = r$. Then there is an open ball $B(A, \varepsilon) \subseteq U \otimes V \otimes W$ such that
\[
\rank_+(A') \ge r
\]
for all $A' \in B(A, \varepsilon) \cap U_+ \otimes V_+ \otimes W_+$.
\end{proposition}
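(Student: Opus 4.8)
The plan is to argue by lower semicontinuity of nonnegative rank near $A$, exploiting the fact that $D_r = \Ima \Sigma^{\RR_+}_r$ is closed (Proposition~\ref{prop:semialg}). Concretely, I would consider the nested chain of closed sets $D_0 \subseteq D_1 \subseteq \dots \subseteq D_{r-1}$, each of which is closed in $U \otimes V \otimes W$ by the cited \cite[Proposition~6.2]{LimC09:jchemo}. Since $\rank_+(A) = r$, the point $A$ lies in $D_r$ but not in $D_{r-1}$; hence $A \notin D_{r-1}$, and because $D_{r-1}$ is closed, its complement is open, so there is an $\varepsilon > 0$ with $B(A, \varepsilon) \cap D_{r-1} = \varnothing$.

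Now take any $A' \in B(A, \varepsilon) \cap (U_+ \otimes V_+ \otimes W_+)$. If $\rank_+(A') < r$, then $A' \in D_{r-1}$, contradicting $B(A,\varepsilon) \cap D_{r-1} = \varnothing$. Therefore $\rank_+(A') \ge r$ for every such $A'$, which is exactly the assertion. The only subtlety to spell out is that $D_{r-1}$ is genuinely closed in the ambient Euclidean space $U \otimes V \otimes W$ (not merely in the cone), so that the open ball witnessing the separation can be taken in $U \otimes V \otimes W$; this is precisely the content of \cite[Proposition~6.2]{LimC09:jchemo} as restated in Proposition~\ref{prop:semialg}, whose proof in turn uses compactness of the set of normalized rank-one nonnegative tensors.

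I do not anticipate a real obstacle here — the statement is essentially an unpacking of the closedness of $D_{r-1}$. The one point requiring care is the degenerate case $r = 0$, i.e.\ $A = 0$: then $D_{-1}$ should be interpreted as $\varnothing$, the conclusion $\rank_+(A') \ge 0$ is vacuous, and any $\varepsilon$ works. For $r \ge 1$ the argument above applies verbatim. It is worth remarking, as the proposition's phrasing already hints, that one cannot upgrade ``$\ge r$'' to ``$= r$'': nearby nonnegative tensors can have strictly larger nonnegative rank, which is exactly the phenomenon of boundary points of $D_r$ examined later in the paper.
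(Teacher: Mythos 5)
Your argument is correct and coincides with the paper's own approach: the paper presents Proposition~\ref{prop:lowcon} as a rephrasing of \cite[Proposition~6.2]{LimC09:jchemo}, which is exactly the closedness of $D_{r-1}$ in the ambient space $U \otimes V \otimes W$ that you invoke (via Proposition~\ref{prop:semialg}) to separate $A$ from $D_{r-1}$ by an open ball. Your treatment of the degenerate case $r=0$ and the remark that ``$\ge r$'' cannot be improved to ``$=r$'' are both accurate and align with what the paper says elsewhere.
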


It follows immediately that the maximum nonnegative typical rank and the maximum nonnegative rank always coincide.
\begin{lemma}\label{lem:maxnonnegtyprk}
If $r$ is the maximum nonnegative rank of $U_+ \otimes V_+ \otimes W_+$, then $r$ is the maximum nonnegative typical rank.
\end{lemma}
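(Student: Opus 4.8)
The plan is to deduce Lemma~\ref{lem:maxnonnegtyprk} directly from Proposition~\ref{prop:lowcon} together with the definitions of typical and maximum rank in Definition~\ref{def:typical}. Let $r$ be the maximum nonnegative rank of $U_+ \otimes V_+ \otimes W_+$, and pick some $A \in U_+ \otimes V_+ \otimes W_+$ with $\rank_+(A) = r$. By Proposition~\ref{prop:lowcon}, there is an open ball $B(A,\varepsilon) \subseteq U \otimes V \otimes W$ with $\rank_+(A') \ge r$ for every $A' \in B(A,\varepsilon) \cap (U_+ \otimes V_+ \otimes W_+)$.

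The key observation is that this intersection contains a nonempty open subset of $U \otimes V \otimes W$: since $A \in U_+ \otimes V_+ \otimes W_+$ is a point where $\rank_+$ attains its global maximum, we may choose $A$ to lie in the interior of the nonnegative cone (if every maximizer were on the boundary of the cone one would still need an argument, so I would first note that the maximum rank is in fact attained at some interior point — for instance because the set of nonnegative tensors of rank $\le r-1$ is closed, hence the set where $\rank_+ = r$ is nonempty and relatively open-ish; more simply, any $A$ with all strictly positive entries that attains the maximum works, and such $A$ exists since perturbing a maximizer slightly into the interior cannot decrease the rank by Proposition~\ref{prop:lowcon}). Thus $B(A,\varepsilon') \subseteq U_+ \otimes V_+ \otimes W_+$ for some $\varepsilon' \le \varepsilon$, and on this smaller ball every tensor has nonnegative rank $\ge r$; but $r$ is the maximum nonnegative rank, so every tensor in $B(A,\varepsilon')$ has nonnegative rank exactly $r$. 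Hence $\{ p \in U_+ \otimes V_+ \otimes W_+ : \rank_+(p) = r \}$ contains a nonempty open subset of $U \otimes V \otimes W$, which is precisely the statement that $r$ is a typical nonnegative rank. Since no rank exceeds $r$, it is the maximum nonnegative typical rank.

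The only genuinely delicate point — and the step I expect to be the main obstacle — is justifying that the maximum nonnegative rank is attained at an \emph{interior} point of the cone $U_+ \otimes V_+ \otimes W_+$, so that the ball $B(A,\varepsilon)$ can be taken to lie entirely inside the cone and therefore constitutes an open subset of the ambient space $U \otimes V \otimes W$ (not merely of the cone). I would handle this by starting from any maximizer $A_0$, noting that by Proposition~\ref{prop:lowcon} a small perturbation of $A_0$ into the interior of the cone cannot lower the rank, and that it cannot raise it either since $r$ is already the maximum; hence the perturbed tensor is an interior maximizer. Everything else is a routine unwinding of Definition~\ref{def:typical}.
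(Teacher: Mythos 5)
Your proof is correct and is exactly the argument the paper has in mind: the paper asserts that the lemma ``follows immediately'' from Proposition~\ref{prop:lowcon}, and you supply the missing details. The step you flag as delicate is in fact automatic: for any maximizer $A$, the set $B(A,\varepsilon)\cap\operatorname{int}(U_+\otimes V_+\otimes W_+)$ is already a nonempty open subset of the ambient space $U\otimes V\otimes W$ on which $\rank_+\ge r$, hence $=r$, so there is no need to first relocate $A$ to the interior of the cone (though your perturbation argument for producing an interior maximizer via Proposition~\ref{prop:lowcon} is equally valid).
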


What about the \emph{minimum} nonnegative typical rank then? It turns out that it is always equal to the (complex) generic rank.
\begin{lemma}\label{lem:mintyprk}
The minimum nonnegative typical rank of $U_+ \otimes V_+ \otimes W_+$ is the complex generic rank $r_g$ of $U_\mathbb{C} \otimes V_\mathbb{C} \otimes W_\mathbb{C}$.
\end{lemma}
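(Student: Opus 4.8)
The plan is to establish the two inequalities $r_g \le (\text{min.\ nonneg.\ typical rank})$ and $r_g \ge (\text{min.\ nonneg.\ typical rank})$ separately. For the lower bound on the minimum nonnegative typical rank, I would argue that no integer $r < r_g$ can be a nonnegative typical rank. Indeed, if $r < r_g$, then by Lemma~\ref{lem:nonnegreal} (or more precisely its proof) we have $\dim D_r < \dim(U \otimes V \otimes W) = n_U n_V n_W$, because $\dim D_r = \dim \Ima{\Sigma^{\RR}_r} \le \dim \Ima{\Sigma^{\CC}_r} < n_U n_V n_W$ when $r < r_g$. Hence $D_r$ cannot contain a nonempty open subset of the ambient space, so $r$ is not a typical $X$-rank in the sense of Definition~\ref{def:typical}. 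This shows that every nonnegative typical rank is at least $r_g$.

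\textbf{The upper bound.} For the reverse inequality I must show that $r_g$ \emph{is} itself a nonnegative typical rank, i.e.\ that $C_{r_g} = \{A : \rank_+(A) = r_g\}$ contains a nonempty (Euclidean) open subset of $U \otimes V \otimes W$, equivalently a positive-measure subset. The key point is that over $\CC$ the generic rank is $r_g$, so $\Sigma^{\CC}_{r_g}$ is dominant; therefore its Jacobian has full rank $n_U n_V n_W$ at a general complex point, and by a real-point density argument (using that $\Seg(\PP U \times \PP V \times \PP W)$ and its secant varieties are real varieties whose complexifications are generated by real equations, cf.\ the discussion around Lemmas~\ref{lem:rcgen} and \ref{lem:realsmoothpt}) the Jacobian of $\Sigma^{\RR}_{r_g}$ also has full rank at a general \emph{real} point. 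Since $(U_+ \times V_+ \times W_+)^{r_g}$ contains a nonempty open subset of $(U \times V \times W)^{r_g}$, we may pick such a full-rank point $(u_1, \dots, w_{r_g})$ with all $u_i, v_i, w_i$ strictly positive. By the (semialgebraic) implicit/inverse function theorem, $\Sigma^{\RR_+}_{r_g}$ is then open near this point, so $D_{r_g}$ contains a nonempty Euclidean open ball $B(A, \varepsilon)$ with $A = \Sigma^{\RR_+}_{r_g}(u_1, \dots, w_{r_g})$. On this ball every point has $\rank_+ \le r_g$; but since $D_{r_g - 1}$ is lower-dimensional (by the lower-bound argument above, or directly since $r_g - 1 < r_g$), after shrinking the ball we may assume $B(A,\varepsilon) \cap D_{r_g-1} = \varnothing$, so every point of the ball has nonnegative rank exactly $r_g$. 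Hence $r_g$ is a nonnegative typical rank, and being $\ge$ every nonnegative typical rank would be impossible, so it is the \emph{minimum} one.

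\textbf{Main obstacle.} The routine parts are the dimension count and the implicit-function-theorem argument. The step requiring the most care is ensuring the full-rank point of $\Sigma^{\RR}_{r_g}$ can be taken \emph{strictly positive}: one needs that the locus where $\nabla \Sigma^{\RR}_{r_g}$ drops rank is a proper subvariety (Lemma~\ref{thm:sard}) and that it does not contain the relatively-open positive orthant cell $(U_+ \times V_+ \times W_+)^{r_g}$ — which holds because that cell has full dimension in $(U\times V\times W)^{r_g}$, so it cannot lie inside a proper subvariety. Equivalently, $\Ima{\Sigma^{\RR_+}_{r_g}}$ contains a Euclidean open subset of $\Ima{\Sigma^{\RR}_{r_g}} = U \otimes V \otimes W$, which is precisely the content extracted in the proof of Lemma~\ref{lem:nonnegreal}. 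Once this is in hand the argument closes immediately.
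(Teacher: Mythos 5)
Your proof is correct and follows essentially the same strategy as the paper's: apply Sard's theorem (Lemma~\ref{thm:sard}) together with the observation that $(U_+\times V_+\times W_+)^r$ contains a full-dimensional open subset of $(U\times V\times W)^r$ to conclude $\dim D_r = \dim \Ima\Sigma^{\RR}_r$ for all $r$, then use that the minimum real typical rank equals $r_g$. The paper compresses this into a single sentence, while you unpack the two inequalities and substitute an inverse-function-theorem openness argument for the paper's implicit dimension count, but the underlying mechanism is the same.
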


\begin{proof}
Since $(U_+ \times V_+ \times W_+)^{r}$ contains an open subset of $(U \times V \times W)^{r}$, by Lemma~\ref{thm:sard}, $\rank(\nabla \Sigma^{\RR_+}_r) = \rank(\nabla \Sigma^{\RR}_r)$ at general points. Hence $\dim \Ima(\Sigma^{\RR_+}_r) = \dim \Ima(\Sigma^{\RR}_r)$, which implies that $r_g$ is the minimum nonnegative typical rank.
\end{proof}

\label{page222eg} We will illustrate these with a $2 \times 2 \times 2$ example. In this case, the complex generic rank of $\mathbb{C}^{2 \times 2 \times 2} $ is $2$ and the real typical ranks of $\mathbb{R}^{2 \times 2 \times 2}$ are $2$ and $3$ \cite{DesiL08:simax}. By Lemmas~\ref{le:interval}, \ref{lem:maxnonnegtyprk}, and \ref{lem:mintyprk}, to completely determine the  nonnegative typical ranks of $\mathbb{R}_+^{2 \times 2 \times 2} $, it remains to find the maximum nonnegative rank. We will construct a nonnegative tensor with maximum nonnegative rank explicitly. Consider the tensor
\begin{equation}\label{eq:tdecomp}
A = e_1 \otimes e_1 \otimes e_1 + e_2 \otimes e_2 \otimes e_1 + e_1 \otimes e_2 \otimes e_2 + e_2 \otimes e_1 \otimes e_2
\end{equation}
that we saw earlier in \eqref{eq:tdecomp0}.
$A$ may be represented by a nonnegative hypermatrix
\[
A = \left[
\begin{array}{cc|cc}
1 & 0 & 0 & 1 \\
0 & 1 & 1 & 0 \\
\end{array}
\right] \in \RR^{2 \times 2 \times 2}_+.
\]
Now let $A = \sum_{k=1}^r x_k \otimes y_k \otimes z_k$ be a nonnegative rank-$r$  decomposition. Then we must be able to write $A = \sum_{k=1}^{r'} X_k \otimes z_k$ where each $X_k$ is a nonnegative matrix. \label{2by2by2case} Observe that $z_k$ cannot be of the form $\alpha e_1 + \beta e_2$ where $\alpha, \beta > 0$. Otherwise by the nonnegativity of each $z_k$ and $X_k$, there is some $i, j \in \{1, 2\}$ such that the $(i, j, 1)$th coordinate and the $(i, j, 2)$th coordinate of $A$ are both positive, which contradicts the construction of $A$. Hence we must have $z_k = e_1$ or $e_2$ for all $k =1,\dots,r'$. So without loss of generality we may assume that $z_1 = e_1$ and $z_2 = e_2$. Then $X_1 = e_1 \otimes e_1+  e_2 \otimes e_2$ and $X_2 = e_1 \otimes e_2 + e_2 \otimes e_1$. By the uniqueness of the nonnegative decompositions of $X_1$ and $X_2$, the nonnegative rank-$r$  decomposition of $A$  in \eqref{eq:tdecomp} is unique. Hence $\rank_+(A) = 4$. Since any $T\in \RR_+^{2 \times  2 \times 2}$ has the form $T = Y_1 \otimes e_1 + Y_2 \otimes e_2$ where $Y_1, Y_2$ are nonnegative matrices, and the nonnegative rank of a nonnegative $2 \times 2$ matrix is at most $2$, we may conclude that the nonnegative rank of $T$ is at most $4$. Thus the nonnegative typical ranks of $\mathbb{R}_+^{2 \times 2 \times 2} $ are $2$, $3$, and  $4$.

Both the real and complex ranks of $A$ are $2$ \cite{DesiL08:simax}. In fact for any $A'$ in a sufficiently small open ball $B(A, \varepsilon)$, both the real and complex ranks of $A'$ are also $2$. If in addition, $A' \in B(A, \varepsilon) \cap(\mathbb{R}_+^{2 \times 2 \times 2} )$, then the nonnegative rank of $A'$ is $4$. This example can be generalized as follows.

\begin{lemma}\label{lem:permma}
Let $P_1, \dots, P_n \in \mathbb{R}^{n \times n}_+ \cong  \RR^n_+ \otimes \RR^n_+$ be $n$ permutation matrices such that for each $(i, j) \in \{1,\dots,n\} \times \{1,\dots, n\}$, there is one and only one $P_k$ whose $(i, j)$th entry is one. Let $e_1, \dots, e_n \in \mathbb{R}^n_+ $ be the standard basis of $\RR^n$. Define
\[
A = P_1 \otimes e_1 + \cdots + P_n \otimes e_n \in \mathbb{R}_+^{n \times n \times n} .
\]
Then $\rank_+(A) = n^2$ and $A$ has a unique nonnegative rank-$n^2$  decomposition.
\end{lemma}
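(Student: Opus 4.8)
The plan is to mimic the $2 \times 2 \times 2$ computation on page~\pageref{2by2by2case}, organizing everything around the third-mode slices of $A$. The $k$th slice of $A$ in the third direction is exactly $P_k$, so $A = \sum_{k=1}^n P_k \otimes e_k$. For the upper bound, note that each permutation matrix $P_k$ is invertible, so $\rank_+(P_k) = \rank(P_k) = n$, and writing $P_k = \sum_{i=1}^n e_i \otimes e_{\pi_k(i)}$, where $\pi_k$ denotes the permutation underlying $P_k$, gives the explicit $n^2$-term nonnegative decomposition $A = \sum_{k=1}^n\sum_{i=1}^n e_i \otimes e_{\pi_k(i)} \otimes e_k$. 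Hence $\rank_+(A) \le n^2$.

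For the lower bound and uniqueness, I would take an arbitrary nonnegative decomposition $A = \sum_{\ell=1}^r x_\ell \otimes y_\ell \otimes z_\ell$ with $r \le n^2$ (each term is rank-one, hence nonzero, so each $x_\ell, y_\ell, z_\ell$ is nonzero). The first key step is to show that each $z_\ell$ is a positive scalar multiple of some standard basis vector $e_{k(\ell)}$. Indeed, if $z_\ell$ had two positive coordinates $k \ne k'$, then choosing $i$ with $(x_\ell)_i > 0$ and $j$ with $(y_\ell)_j > 0$ would make the term $x_\ell \otimes y_\ell \otimes z_\ell$ contribute a positive amount to both the $(i,j,k)$ and the $(i,j,k')$ entries of $A$, forcing $(P_k)_{ij} > 0$ and $(P_{k'})_{ij} > 0$ --- impossible, since the hypothesis says exactly one $P_m$ has a nonzero $(i,j)$ entry. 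So we may partition $\{1,\dots,r\}$ into groups $S_1, \dots, S_n$, where $\ell \in S_k$ iff $z_\ell$ is a positive multiple of $e_k$. Reading off the $k$th third-mode slice of $A$ annihilates every term with $\ell \notin S_k$ and produces a nonnegative $|S_k|$-term decomposition $P_k = \sum_{\ell \in S_k} (c_{\ell k} x_\ell) \otimes y_\ell$, where $z_\ell = c_{\ell k} e_k$ with $c_{\ell k} > 0$. Therefore $|S_k| \ge \rank_+(P_k) = n$, and summing over $k$ gives $r = \sum_k |S_k| \ge n^2$; combined with $r \le n^2$ this yields $r = n^2$ and $|S_k| = n$ for every $k$, so $\rank_+(A) = n^2$.

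For uniqueness, I would first prove the auxiliary fact that a permutation matrix $P$ has an essentially unique nonnegative rank-$n$ decomposition, namely $\sum_i e_i \otimes e_{\pi(i)}$. This is a short support argument: in any $P = \sum_{m=1}^n a_m \otimes b_m$ with $a_m, b_m \ge 0$ nonzero, each vanishing entry $(P)_{ij} = 0$ forces $(a_m)_i (b_m)_j = 0$ for all $m$, so $\operatorname{supp}(a_m) \times \operatorname{supp}(b_m)$ is contained in the graph of $\pi$; since $\pi$ is a bijection this forces both supports to be singletons, whence $a_m \otimes b_m = \mu_m\, e_{i_m} \otimes e_{\pi(i_m)}$, and matching entries of $P$ (which has a $1$ in every row) forces $m \mapsto i_m$ to be a bijection with all $\mu_m = 1$. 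Applying this to each of the slice decompositions $P_k = \sum_{\ell \in S_k} (c_{\ell k} x_\ell) \otimes y_\ell$ pins down every rank-one tensor $(c_{\ell k} x_\ell) \otimes y_\ell$, hence every term $x_\ell \otimes y_\ell \otimes z_\ell = \bigl((c_{\ell k} x_\ell) \otimes y_\ell\bigr) \otimes e_k$, to be one of the $e_i \otimes e_{\pi_k(i)} \otimes e_k$. Thus the only nonnegative rank-$n^2$ decomposition of $A$ is the one exhibited above, the $n^2$ tensors $e_i \otimes e_{\pi_k(i)} \otimes e_k$ being pairwise distinct since the pair $(i,k)$ is recovered from the first and third factors.

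I expect the main obstacle to be not conceptual but a matter of bookkeeping in the middle step: carefully verifying that restricting to the $k$th slice really yields a genuine nonnegative decomposition of $P_k$ with exactly $|S_k|$ (nonzero) terms, so that the inequality $|S_k| \ge n$ and the subsequent tight count are legitimate. The auxiliary uniqueness statement for permutation matrices, while it needs its own proof, is elementary. One should also confirm that the combinatorial hypothesis on the $P_k$ is invoked exactly where claimed --- to force distinct $P_k$ to have disjoint supports --- and nowhere else.
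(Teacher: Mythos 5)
Your proposal is correct and follows essentially the same route as the paper: use the disjoint-support hypothesis on the $P_k$ to force each third-mode factor onto a coordinate axis, slice by $k$ to reduce to nonnegative factorizations of the individual permutation matrices, and then invoke the (essentially unique) nonnegative factorization of a permutation matrix. The only cosmetic difference is that the paper cites a reference for that last fact while you prove it inline, and you make the $\rank_+(A) \ge n^2$ count fully explicit where the paper leaves it implicit.
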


\begin{proof}
It suffices to show that $A$ has a unique nonnegative rank-$n^2$  decomposition. Suppose
\[
A = \sum_{i=1}^{n^2} \Biggl[\sum_{j=1}^n \alpha^j_i e_j\Biggr] \otimes \Biggl[\sum_{j=1}^n \beta^j_i e_j\Biggr] \otimes \Biggl[\sum_{j=1}^n \gamma^j_i e_j\Biggr]
\]
for nonnegative $\alpha^j_i, \beta^j_i, \gamma^j_i$. Without loss of generality, we may assume $\alpha^1_1, \beta^1_1, \gamma^1_1 \neq 0$. Since there is only one $P_k$ whose $(1,1)$th entry is nonzero, this $P_k$ must be $P_1$ and $\gamma^j_1 = 0$ for all $j > 1$. Repeating this procedure we may show that when we regard $A$ as a  nonnegative matrix in $\mathbb{R}^{n^2 \times n}_+ \cong \mathbb{R}_+^{n \times n} \otimes \mathbb{R}^n_+$, it has a unique nonnegative matrix factorization given by $A = P_1 \otimes e_1 + \cdots + P_n \otimes e_n$. Since each $P_k$ has a unique nonnegative matrix factorization \cite{LaurbergCPHJ08:cin}, $A$ has a unique nonnegative rank-$n^2$  decomposition.
\end{proof}

A $d$-tensor in $V_1\otimes\dots\otimes V_d$ is said to be \emph{cubical} if $\dim V_1 = \dots= \dim V_d$.  By \cite[Theorem~4.4]{Lickteig85:laa}, \cite[Theorem~4.6]{Strassen83:laa}, Lemmas~\ref{le:interval}, \ref{lem:mintyprk}, \ref{lem:maxnonnegtyprk}, and \ref{lem:permma}, we completely determine the nonnegative typical ranks of cubical nonnegative tensors.
\begin{proposition}\label{prop:typical1}
For $n = 2$, the nonnegative typical ranks of $\mathbb{R}_+^{2 \times 2 \times 2} $ are given by all integers $m$ where
\[
2\le m \le 4.
\]
For $n = 3$, the nonnegative typical ranks of $\mathbb{R}_+^{3 \times 3 \times 3}$ are given by all integers $m$ where
\[
5\le m \le 9.
\]
For $n \ge 4$,
the nonnegative typical ranks of $\mathbb{R}_+^{n \times n \times n}$ are given by all integers $m$ where
\[
\left\lceil\frac{n^3}{3n-2}\right\rceil \le m \le n^2.
\]
\end{proposition}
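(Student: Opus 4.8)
The plan is to reduce Proposition~\ref{prop:typical1} to the three ingredients whose job is to pin down the two endpoints of the interval of typical ranks, and then invoke Lemma~\ref{le:interval} to fill in everything between. By Lemma~\ref{lem:mintyprk} the minimum nonnegative typical rank is the complex generic rank $r_g$ of $\CC^{n\times n\times n}$, and by Lemma~\ref{lem:maxnonnegtyprk} the maximum nonnegative typical rank equals the maximum nonnegative rank. So the entire proof is: (i) compute $r_g$ in each of the three regimes; (ii) compute the maximum nonnegative rank in each regime; (iii) check that the Zariski-closure hypothesis of Lemma~\ref{le:interval} is met for $X=$ the nonnegative rank-one cone, so that the interval $\{m : r_g \le m \le \text{maxrank}\}$ consists entirely of typical nonnegative ranks.

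For step (i): the complex generic rank of $\CC^{2\times2\times2}$ is $2$ and of $\CC^{3\times3\times3}$ is $5$; for $n\ge 4$, by the defectivity classification of Lickteig \cite[Theorem~4.4]{Lickteig85:laa} the Segre $\Seg(\PP^{n-1}\times\PP^{n-1}\times\PP^{n-1})$ is not defective, so $r_g$ equals the expected value $\lceil n^3/(3n-2)\rceil$. For step (ii): the case $n=2$ was already worked out by hand in the paragraph preceding the proposition, giving maximum nonnegative rank $4$; for general $n$, Lemma~\ref{lem:permma} (applied, e.g., to the $n$ cyclic-shift permutation matrices, which indeed hit every entry exactly once) exhibits an explicit nonnegative tensor of nonnegative rank $n^2$, and the upper bound $\rank_+(T)\le n^2$ for every $T\in\RR_+^{n\times n\times n}$ follows by slicing $T = Y_1\otimes e_1+\cdots+Y_n\otimes e_n$ and using that a nonnegative $n\times n$ matrix has nonnegative rank at most $n$. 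Thus the maximum nonnegative rank is exactly $n^2$ for all $n\ge2$; in particular for $n=3$ this reads $9$, matching the stated interval $5\le m\le 9$. Strassen's bounds \cite[Theorem~4.6]{Strassen83:laa} are cited as an alternate route to some of these rank facts (e.g.\ nondefectivity / the value of $r_g$) but are not logically essential once Lickteig is invoked.

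For step (iii): the relevant $X$ is the semialgebraic cone of nonnegative rank-$\le 1$ tensors in $\RR^{n\times n\times n}$; its Zariski closure is the affine cone over the Segre variety, which is irreducible, nondegenerate, and Zariski dense in its complexification (it has real smooth points), so the hypotheses of Lemma~\ref{le:interval} hold. Since $r_g$ is a typical nonnegative rank (Lemma~\ref{lem:mintyprk}) and $n^2$ is a typical nonnegative rank (Lemma~\ref{lem:maxnonnegtyprk} together with the maximum-rank computation), Lemma~\ref{le:interval} forces every integer $m$ with $r_g\le m\le n^2$ to be a typical nonnegative rank, and these are all of them because $r_g$ is the minimum and $n^2$ the maximum. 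Substituting the three values of $r_g$ — namely $2$, $5$, and $\lceil n^3/(3n-2)\rceil$ — yields the three displayed intervals.

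The only mildly delicate point, and the one I would spend the most care on, is making sure the endpoints are genuinely \emph{attained} as typical (not merely bracketing) ranks: the minimum end is immediate from Lemma~\ref{lem:mintyprk}, but the maximum end requires knowing both that $n^2$ \emph{occurs} (Lemma~\ref{lem:permma}) and that it is \emph{not exceeded} (the slicing bound), and then translating "maximum nonnegative rank" into "maximum nonnegative typical rank" via Lemma~\ref{lem:maxnonnegtyprk}, which rests on Proposition~\ref{prop:lowcon}. Everything else is bookkeeping: citing Lickteig for nondefectivity when $n\ge4$, and recording the small-case values $r_g=2$ for $n=2$ and $r_g=5$ for $n=3$.
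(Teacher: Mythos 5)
Your proposal is correct and follows essentially the same route as the paper, which in fact gives no further argument beyond the one-sentence citation of Lickteig, Strassen, Lemmas~\ref{le:interval}, \ref{lem:mintyprk}, \ref{lem:maxnonnegtyprk}, and \ref{lem:permma}; your write-up simply spells out how those ingredients pin down the two endpoints and how Lemma~\ref{le:interval} fills in the interval. The only small point worth making explicit (which you gesture at correctly) is that the Zariski closure of the nonnegative rank-one cone coincides with the affine cone over the real Segre variety because the nonnegative orthant is Zariski dense in the ambient space, so the hypotheses of Lemma~\ref{le:interval} are indeed satisfied.
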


For nonnegative tensors that are not cubical, we may determine the maximum nonnegative typical ranks but since the complex generic ranks for $3$-tensors are still not known in some instances, we do not have a complete list of nonnegative typical ranks.
\begin{proposition}\label{prop:typical2}
Write $\operatorname{maxrank}_+(m,n,p)$ for the maximum nonnegative typical rank of $\mathbb{R}_+^{m \times n \times p}$ and suppose without loss of generality that $m \ge n \ge p$. Then
\[
\operatorname{maxrank}_+(m,n,p) =
\begin{cases}
np & \text{if}\; m = n \ge p,\\
n^2 &\text{if}\; m \ge n = p,\\
np & \text{if}\; m > n > p.
\end{cases}
\]
\end{proposition}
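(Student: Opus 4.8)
The plan is to reduce the whole statement to a computation of the \emph{maximum nonnegative rank}. By Lemma~\ref{lem:maxnonnegtyprk}, $\operatorname{maxrank}_+(m,n,p)$ equals the maximum nonnegative rank of $\mathbb{R}_+^{m\times n\times p}$, so it suffices to show that this maximum equals $np$; note that the three cases in the statement are really one case, since when $n=p$ one has $n^2=np$, so the claim is simply that the maximum nonnegative rank of $\mathbb{R}_+^{m\times n\times p}$ (with $m\ge n\ge p$) is the product of the two smallest dimensions.

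For the upper bound I would slice an arbitrary $T\in\mathbb{R}_+^{m\times n\times p}$ along the mode of smallest dimension, writing $T=\sum_{k=1}^{p}Y_k\otimes e_k$ with $Y_k\in\mathbb{R}_+^{m\times n}$. Since a nonnegative $m\times n$ matrix is the sum of its $n$ nonnegative rank-one column terms, $\rank_+(Y_k)\le n$, and therefore $\rank_+(T)\le\sum_{k=1}^{p}\rank_+(Y_k)\le np$. This argument is uniform across all three cases.

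For the lower bound I would exhibit one nonnegative tensor attaining $np$. Choose $p\le n$ permutation matrices $P_1,\dots,P_p\in\mathbb{R}_+^{n\times n}$ with pairwise disjoint supports (for instance the cyclic shifts by $1,\dots,p$) and set $A=\sum_{k=1}^{p}P_k\otimes e_k\in\mathbb{R}_+^{n\times n\times p}$. In any nonnegative decomposition $A=\sum_{l=1}^{r}x_l\otimes y_l\otimes z_l$ with $r=\rank_+(A)$ and all terms nonzero, each $z_l$ must be a multiple of some $e_{k(l)}$: otherwise the term $x_l\otimes y_l\otimes z_l$ would contribute the same matrix pattern $x_l\otimes y_l$, up to positive scaling and hence with no possibility of cancellation, to two distinct slices $P_k$ and $P_{k'}$, forcing $\operatorname{supp}(x_l\otimes y_l)\subseteq\operatorname{supp}(P_k)\cap\operatorname{supp}(P_{k'})=\varnothing$. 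Grouping the terms according to $k(l)$ then exhibits each $P_k$ as a sum of $\lvert\{l:k(l)=k\}\rvert$ nonnegative rank-one matrices, so $\lvert\{l:k(l)=k\}\rvert\ge\rank_+(P_k)=n$ because a permutation matrix is invertible and hence has nonnegative rank $n$. Summing over $k$ gives $r\ge np$, and with the upper bound, $\rank_+(A)=np$; when $p=n$ this $A$ is precisely the tensor of Lemma~\ref{lem:permma}. Finally, viewing $\mathbb{R}_+^{n\times n\times p}\subseteq\mathbb{R}_+^{m\times n\times p}$ as an inclusion in the first factor, Corollary~\ref{cor:rksrkl} shows the nonnegative rank of $A$ is unchanged, so $\operatorname{maxrank}_+(m,n,p)\ge np$, completing the proof.

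The main obstacle is the lower bound, and inside it the step showing that the third-mode factors $z_l$ of a minimal nonnegative decomposition of $A$ are forced to be standard basis vectors. This is exactly where the disjointness of the supports of the $P_k$ and the absence of cancellation over $\mathbb{R}_+$ enter, and it is the genuinely ``nonnegative'' part of the argument, since the same tensor has far smaller rank over $\mathbb{R}$ or $\mathbb{C}$. Once this reduction to the matrix case is in place, the remainder is routine bookkeeping with Lemma~\ref{lem:maxnonnegtyprk} and Corollary~\ref{cor:rksrkl}.
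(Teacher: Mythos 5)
Your proof is correct, and it follows the same core idea the paper has in mind --- a tensor built from permutation matrices tensored with standard basis vectors, then padded with zeros via Corollary~\ref{cor:rksrkl} --- but it is genuinely more complete and, I think, cleaner than the one-line argument the paper actually gives. The paper simply says to repeat the argument of Lemma~\ref{lem:permma} ``padded with the appropriate number of zeros,'' but Lemma~\ref{lem:permma} assumes $n$ permutation matrices whose supports \emph{partition} $\{1,\dots,n\}^2$, and when the third dimension is $p<n$ there are only $p$ slices available, so that hypothesis cannot be met as stated. Your observation that the operative condition is merely pairwise \emph{disjointness} of the supports of $P_1,\dots,P_p$ (not a full partition) is exactly what makes the reference work, and the paper leaves that to the reader. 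You also replace the paper's inductive ``peel off coordinates and then appeal to uniqueness of NMF'' argument with a direct counting argument: support disjointness forces each $z_l$ to lie on a coordinate axis, grouping the rank-one terms by slice expresses each $P_k$ as a nonnegative sum of at least $\rank_+(P_k)=n$ rank-one matrices, and summing over $k$ gives $r\ge np$. That is tidier and does not require the uniqueness statement of Lemma~\ref{lem:permma} at all. Finally, you explicitly supply the matching upper bound $np$ by slicing along the smallest mode, and the reduction to the maximum nonnegative rank via Lemma~\ref{lem:maxnonnegtyprk}, both of which the paper leaves implicit. In short: same construction, but your proof fills a real gap in the paper's terse write-up and streamlines the counting.
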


\begin{proof}
The required arguments are as in the proof of Lemma~\ref{lem:permma} but `padded with the appropriate number of zeros,' i.e., applied to matrices of the form
\[
\begin{bmatrix}
P_k\\
0 
\end{bmatrix} \quad \text{or} \quad
\begin{bmatrix}
P_k & 0 
\end{bmatrix}
\]
where $P_k$ is a permutation matrix.
\end{proof}

\section{General uniqueness of decompositions of approximations}\label{sec:generic}

In our previous work \cite{QComonLim14:arxiv}, we established that a general nonnegative tensor has a unique best nonnegative rank-$r$ approximation. Here we investigate whether this best nonnegative rank-$r$ approximation has a unique nonnegative rank-$r$ decomposition.

Let $U,V,W$ be real vector spaces of dimensions $n_U,n_V,n_W$ respectively. We will assume a choice of basis on these vector spaces,  so that $U \cong \mathbb{R}^{n_U}$, $V \cong \mathbb{R}^{n_V}$, and $W \cong \mathbb{R}^{n_W}$. For a vector $u_i \in U$, we let $u_{i, j}$ denote the $j$th coordinate of $u_i$. Likewise for $V$ and $W$. For any smooth curve $\gamma(t)$, $t \in [0, 1]$, the right derivative at $0$ is denoted by
\[
\gamma'(0) \coloneqq
\lim_{t \to 0^+} \frac{\gamma(t) - \gamma(0)}{t - 0}.
\]
Recall the map $\Sigma^{\RR_+}_r \colon (U_+ \times V_+ \times W_+ )^r \to U_+ \otimes V_+ \otimes W_+ $ defined in \eqref{eq:sigma} and \eqref{eq:sigma2}. The \emph{pushforward} of $\Sigma^{\RR_+}_r$ at $\gamma'(0)$ is denoted
\[
\Sigma^{\RR_+}_{r*}\bigl(\gamma'(0)\bigr) \coloneqq
\lim_{t \to 0^+} \frac{\Sigma^{\RR_+}_r\bigl(\gamma(t)\bigr) - \Sigma^{\RR_+}_r\bigl(\gamma(0)\bigr)}{t - 0}.
\]

Let $S_r \subseteq U_+ \otimes V_+ \otimes W_+ $ denote the set of nonnegative tensors on which the distance function  $\dist (\cdot, D_r)$ is not smooth. Then $S_r$ contains the nonnegative tensors with non-unique best nonnegative rank-$r$ approximations and is a nowhere dense semialgebraic subset \cite{FrieS16:banach}. Let $\pi_r \colon U_+ \otimes V_+ \otimes W_+ \setminus S_r \to D_r$ be the map sending a nonnegative tensor to its unique best nonnegative rank-$r$ approximation. Since the distance function $\dist(\cdot, D_r)$ is semialgebraic \cite{Coste02:raag, FrieS16:banach}, the graph of $\pi_r$,
\[
G(\pi_r) = \{ (p, q) \in (U_+ \otimes V_+ \otimes W_+ \setminus S_r) \times D_r : \dist(p, D_r) = \norm{p - q} \},
\]
is also semialgebraic. By \label{pageprop6} Proposition~\ref{prop:smooth}, the subset of points in $U_+ \otimes V_+ \otimes W_+ \setminus S_r$ where $\pi_r$ is not smooth is contained in a hypersurface $H_r$. Henceforth we will focus on the restriction of $\pi_r$ (also denoted $\pi_r$ with a slight abuse of notation) to a subset of smooth points in $ U_+ \otimes V_+ \otimes W_+$,
\[
\pi_r \colon U_+ \otimes V_+ \otimes W_+ \setminus (S_r \cup H_r) \to D_r.
\]

In the following the \emph{support} of a vector $u \in U$ is defined to be
\[
\operatorname{supp}(u) \coloneqq \{ i \in \{1,\dots,n_U \} : u_i \ne 0\}.
\]
The next lemma is a slight rephrase of \cite[Lemma~13]{QComonLim14:arxiv}. We will use it to partition $D_r$ into a union of semialgebraic sets later.
\begin{lemma}\label{le: perp}
Let $p \in U_+ \otimes V_+ \otimes W_+ \setminus (S_r \cup H_r)$ where $\pi_r(p)$ has a nonnegative rank-$r$ decomposition
\begin{equation}\label{eq:phi}
\pi_r(p) = \sum_{i = 1}^r u_i \otimes v_i \otimes w_i.
\end{equation}
Then for any  $x_i \in U_+$, $i = 1, \dots, r$, we have
\begin{equation}\label{ineq:tan}
\left\langle p, x_i \otimes v_i \otimes w_i \right\rangle \le \left\langle \pi_r(p), x_i \otimes v_i \otimes w_i \right\rangle,
\end{equation}
where $\langle\cdot,\cdot\rangle$ denotes the Euclidean inner product.
With respect to the nonnegative vectors $u_1,\dots,u_r$ in \eqref{eq:phi}, define the subspaces
\begin{equation}\label{eq:tilde}
\widetilde{U}_i \coloneqq \{u \in U : \operatorname{supp}(u) \subseteq \operatorname{supp}(u_{i}) \}
\end{equation}
for $i =1,\dots,r$, and define $\widetilde{V}_i$ and $\widetilde{W}_i$ similarly. Then for $x_i \in \widetilde{U}_i$, $i=1,\dots,r$, we have
\begin{equation}\label{eq:nontan}
\langle p, x_i \otimes v_i \otimes w_i \rangle = \left\langle \pi_r(p), x_i \otimes v_i \otimes w_i \right\rangle.
\end{equation}
The analogous statement for $\widetilde{V}_i$ or $\widetilde{W}_i$  in place of $\widetilde{U}_i$ holds true as well.
\end{lemma}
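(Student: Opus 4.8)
The plan is to exploit the fact that $\pi_r(p)$ minimizes the squared Euclidean distance $q \mapsto \norm{p - q}^2$ over $D_r$, by feeding in explicit one-parameter families of feasible perturbations of the given decomposition \eqref{eq:phi}. Fix an index $i$ and a vector $x_i \in U_+$, and consider the curve
\[
\gamma(t) \coloneqq (u_i + t x_i) \otimes v_i \otimes w_i + \sum_{j \ne i} u_j \otimes v_j \otimes w_j, \qquad t \ge 0.
\]
First I would observe that since $x_i, u_i \in U_+$ the factor $u_i + t x_i$ stays nonnegative for every $t \ge 0$, so $\gamma(t)$ is a sum of $r$ nonnegative rank-one tensors and hence lies in $D_r$ for all $t \ge 0$; in the language of \eqref{eq:sigma2} this $\gamma$ is $\Sigma^{\RR_+}_r$ applied to the curve in $(U_+ \times V_+ \times W_+)^r$ that perturbs only the $i$th $U$-factor, with pushforward $\Sigma^{\RR_+}_{r*}(\gamma'(0)) = x_i \otimes v_i \otimes w_i$.

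Next, since $\gamma(0) = \pi_r(p)$ realizes $\dist(p, D_r)$ and $\gamma(t) \in D_r$ for $t \ge 0$, the quadratic polynomial $t \mapsto \norm{p - \gamma(t)}^2$ attains its minimum over $[0, \infty)$ at $t = 0$, so I would conclude that its right derivative at $0$ is nonnegative. That right derivative equals $2 \langle \pi_r(p) - p,\; x_i \otimes v_i \otimes w_i \rangle$, and nonnegativity of this quantity is exactly \eqref{ineq:tan}. For the equality \eqref{eq:nontan} I would instead take $x_i \in \widetilde{U}_i$, so that $\operatorname{supp}(x_i) \subseteq \operatorname{supp}(u_i)$: on the coordinates in $\operatorname{supp}(u_i)$ the entries of $u_i$ are strictly positive, while off $\operatorname{supp}(u_i)$ both $u_i$ and $x_i$ vanish, hence $u_i + t x_i \in U_+$ for all $t$ in a two-sided interval $(-\varepsilon, \varepsilon)$. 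Then $t = 0$ is an interior minimum of the same quadratic, so its (two-sided) derivative vanishes there, giving \eqref{eq:nontan}. Finally, replacing the perturbed $U$-factor by a perturbed $V$- or $W$-factor and running the identical computation yields the analogous statements for $\widetilde{V}_i$ and $\widetilde{W}_i$.

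I do not expect a serious obstacle here: this is a first-order optimality condition adapted to the parametrization of $D_r$ by rank-one summands. The only points needing a little care are (a) checking that the perturbed tensor remains in $D_r$, which is immediate because it is manifestly a sum of $r$ nonnegative rank-one tensors, and (b) verifying in the equality case that the support containment $\operatorname{supp}(x_i) \subseteq \operatorname{supp}(u_i)$ in \eqref{eq:tilde} is precisely what permits the variation for $t$ of either sign, turning the one-sided inequality into a two-sided equality. The hypothesis $p \notin S_r \cup H_r$ enters only to guarantee that $\pi_r(p)$ is well-defined (and, for later use, that $\pi_r$ is smooth); it plays no further role in this particular computation.
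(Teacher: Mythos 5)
Your proof is correct, and it is the natural first-order optimality argument. The paper itself does not reproduce a proof of this lemma; it cites it as ``a slight rephrase of Lemma~13'' of the companion paper \cite{QComonLim14:arxiv}, and the variational argument you give --- perturbing a single rank-one summand along $t \mapsto u_i + t x_i$, observing that this stays in $D_r$ for $t \ge 0$ when $x_i \in U_+$ (yielding a one-sided optimality inequality) and for $t$ in a two-sided interval when $\operatorname{supp}(x_i) \subseteq \operatorname{supp}(u_i)$ (yielding equality) --- is precisely the expected computation and matches what the reference uses.
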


%

We first remind the reader of our abbreviated notation in \eqref{eq:abbre}. Let
\[
\mathsf{T}_{\pi_r(p)} (u_1, \dots, w_r) \coloneqq \operatorname{span}_\mathbb{R} \Bigl(\bigcup\nolimits_{i=1}^r \widetilde{U}_i \otimes v_i \otimes w_i \cup u_i \otimes \widetilde{V}_i \otimes w_i \cup u_i \otimes v_i \otimes \widetilde{W}_i \Bigr).
\]
By Lemma~\ref{lem:semiterracinilem}, this is the tangent space of $D_r$ at $\pi_r(p)$ when $\pi_r(p)$ is a smooth point of $D_r$. Then \eqref{eq:nontan} implies that\footnote{Our convention: $\langle S, u \rangle = \langle u, S \rangle = 0$ for $S \subseteq U$ means that every vector in $S$ is orthogonal to $u$; $\langle S, T \rangle = 0$ for $S,T \subseteq U$ means that any vector in $S$ is orthogonal to any vector in $T$.}
\begin{equation}\label{eq:tanperp}
\langle \mathsf{T}_{\pi_r(p)}(u_1, \dots, w_r), p - \pi_r(p) \rangle = 0,
\end{equation}
i.e., $ p - \pi_r(p)$ is orthogonal to the subspace $\mathsf{T}_{\pi_r(p)}(u_1, \dots, w_r)$.

Let $\sigma_r$ denote the Euclidean closure of $\Ima{\Sigma^{\RR}_r}$. Then $D_r \subseteq \sigma_r$. By the Tarski--Seidenberg Theorem, $\sigma_r$ is semialgebraic. By \cite[Theorem~3.7]{FrieS16:banach}, a general $A \in U \otimes V \otimes W \setminus \sigma_r$ has a unique best approximation $\widetilde{\pi}_r(A)$ in $\sigma_r$. Note that for a nonnegative $A$, $\widetilde{\pi}_r(A) \in \sigma_r$ may be different from $\pi_r(A) \in D_r$.

In order to study best nonnegative rank approximations, i.e., the image of $\pi_r$, we first partition $D_r$ into a union of special semialgebraic subsets. For any index set $I_i \subseteq \{1,\dots,n_U\}$, let
\[
U_+(I_i) \coloneqq \{ u \in U_+ :  \operatorname{supp}(u) = I_i^c  \}
\]
and likewise for $V_+(J_i)$ and $W_+(K_i)$ with index sets $J_i \subseteq \{1,\dots,n_V\}$ and $K_i \subseteq \{1,\dots,n_W\}$. Here $I_i^c \coloneqq \{1, \dots, n_U\} \setminus I_i$ denotes set-theoretic complement. Given tuples of index sets
\[
I= (I_1, \dots, I_r), \quad J = (J_1, \dots, J_r), \quad K =(K_1, \dots, K_r)
\]
with $I_i \subseteq \{ 1, \dots, n_U \}$, $J_i \subseteq \{ 1, \dots, n_V \}$, $K_i \subseteq \{ 1, \dots, n_W \}$, $i =1,\dots,r$, we define  a \emph{cell} of $D_r$ corresponding to these index sets by
\begin{multline*}
D_r(I, J, K)  \coloneqq \biggl\{ A \in D_r : A = \sum\nolimits_{i=1}^r u_i \otimes v_i \otimes w_i, \\
u_i \in U_+(I_i), \; v_i \in V_+(J_i), \; w_i \in W_+(K_i), \; i=1,\dots,r \biggr\}.
\end{multline*}
The notion of a cell is important for our study of uniqueness because of the following easy observation.
\begin{lemma}\label{lem:distnonunique}
Let $A \in D_r$. If $A$ belongs to distinct cells, then the nonnegative $r$-term decomposition of $A$ is not unique.
\end{lemma}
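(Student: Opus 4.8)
The plan is to argue by contradiction: from membership in two distinct cells I would extract two genuinely different nonnegative $r$-term decompositions of $A$. Concretely, if $A\in D_r(I,J,K)\cap D_r(I',J',K')$ with $D_r(I,J,K)\ne D_r(I',J',K')$ as subsets of $D_r$, then $A$ admits a decomposition $A=\sum_{i=1}^r u_i\otimes v_i\otimes w_i$ with $\operatorname{supp}(u_i)=I_i^c$, $\operatorname{supp}(v_i)=J_i^c$, $\operatorname{supp}(w_i)=K_i^c$, and also a decomposition $A=\sum_{i=1}^r u_i'\otimes v_i'\otimes w_i'$ with $\operatorname{supp}(u_i')=(I_i')^c$, $\operatorname{supp}(v_i')=(J_i')^c$, $\operatorname{supp}(w_i')=(K_i')^c$. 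Suppose toward a contradiction that the nonnegative $r$-term decomposition of $A$ is unique; then these two decompositions coincide as unordered sets of rank-one tensors, so there is a permutation $\sigma$ of $\{1,\dots,r\}$ with $u_i'\otimes v_i'\otimes w_i'=u_{\sigma(i)}\otimes v_{\sigma(i)}\otimes w_{\sigma(i)}$ for all $i$.

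The next step uses the elementary fact that a nonzero rank-one nonnegative tensor pins down the supports of its factors: if $u\otimes v\otimes w=u'\otimes v'\otimes w'\ne 0$, then $u'=\lambda u$, $v'=\mu v$, $w'=\nu w$ for positive scalars with $\lambda\mu\nu=1$, hence $\operatorname{supp}(u')=\operatorname{supp}(u)$, and likewise for the other two factors. Applying this to each identity $u_i'\otimes v_i'\otimes w_i'=u_{\sigma(i)}\otimes v_{\sigma(i)}\otimes w_{\sigma(i)}$ gives $I_i'=I_{\sigma(i)}$, $J_i'=J_{\sigma(i)}$, $K_i'=K_{\sigma(i)}$ for every $i$, so $(I',J',K')$ is obtained from $(I,J,K)$ by the single simultaneous permutation $\sigma$ of the $r$ index-triples. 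Since relabelling the summands of $\sum_{i=1}^r u_i\otimes v_i\otimes w_i$ shows $D_r(I,J,K)$ is invariant under such simultaneous permutations, we would obtain $D_r(I',J',K')=D_r(I,J,K)$, contradicting distinctness; hence the $r$-term decomposition of $A$ is not unique.

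The step I expect to require the most care is the treatment of \emph{zero terms}, which occur when some index set, say $I_i$, equals the full set $\{1,\dots,n_U\}$, forcing $u_i=0$: then $u_i\otimes v_i\otimes w_i=0$ is not a rank-one tensor and has no associated support-triple, so the preceding argument does not literally apply to that summand. I would dispatch this by observing that such a summand contributes nothing to $A$ and that the cell $D_r(I,J,K)$ does not depend on the index sets attached to a full one; after discarding these redundant data on both sides, one is left comparing honest rank-one decompositions of $A$ with the same number of nonzero summands, to which the argument above applies verbatim. The only genuine subtlety behind this bookkeeping is clarifying what ``the $r$-term decomposition'' should mean when $\rank_+(A)<r$; aside from that, the entire proof reduces to the invariance of the multiset of support-triples under a change of nonnegative decomposition, which is immediate and is the crux.
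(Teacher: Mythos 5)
The paper offers no proof of this lemma --- it is presented as an ``easy observation'' --- so there is no argument to compare against; your proposal supplies the natural one, and its core is correct. The key fact, that a \emph{nonzero} nonnegative rank-one tensor $u\otimes v\otimes w$ determines $\operatorname{supp}(u)$, $\operatorname{supp}(v)$, $\operatorname{supp}(w)$ outright (since $u'=\lambda u$, $v'=\mu v$, $w'=\nu w$ with $\lambda,\mu,\nu>0$ in the nonnegative setting), shows that the multiset of support-triples is an invariant of an $r$-term nonnegative decomposition up to the simultaneous permutation of summands; and a cell $D_r(I,J,K)$ is manifestly unchanged under such a permutation of the tuple $(I,J,K)$. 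This is exactly what makes the lemma ``easy.''

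One place to tighten: your claim that, after discarding summands forced to vanish by a full index set, one is comparing decompositions ``with the same number of nonzero summands'' is not automatic --- the two cell decompositions could \emph{a priori} have different numbers of nonzero terms. But this degenerate case is even simpler than you make it. If either cell decomposition contains a zero summand, then $\rank_+(A)\le r-1$; since $A$ is then nonzero (a sum of nonnegative rank-one terms can only vanish if all terms vanish, and two cells all of whose summands vanish coincide as the cell containing only $0$), $A$ has an $(r-1)$-term nonnegative decomposition, and splitting any one of its summands as $u\otimes v\otimes w=\lambda\,u\otimes v\otimes w+(1-\lambda)\,u\otimes v\otimes w$ with $\lambda\in(0,1)$ free gives infinitely many distinct $r$-term decompositions. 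So the only substantive case is when both cell decompositions consist of $r$ honest rank-one terms, and there your argument applies verbatim.
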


Clearly, if $I_i = J_i = K_i = \varnothing$ for all $i = 1, \dots, r$, then $\dim D_r(I, J, K)  = \dim D_r$ and we call this the \emph{trivial cell}. The union of all nontrivial cells is called the \emph{boundary} of $D_r$, and denoted by $\partial D_r$.
\begin{lemma}\label{lem:boundary}
If $r < r_g$ and $U \otimes V \otimes W$ is not $r$-defective, then $\dim \partial D_r < \dim D_r$.
\end{lemma}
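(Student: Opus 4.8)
The plan is to bound the dimension of each nontrivial cell $D_r(I,J,K)$ separately and then observe that $\partial D_r$ is a finite union of such cells, so $\dim \partial D_r = \max_{(I,J,K)} \dim D_r(I,J,K)$ over the finitely many nontrivial index tuples. It therefore suffices to show that whenever at least one of the index sets, say $I_1$, is nonempty, we have $\dim D_r(I,J,K) < \dim D_r$. First I would note that, by the commutative diagram \eqref{eq:sigma2} and Lemma~\ref{thm:sard}, $\dim D_r = \dim \Ima \Sigma^{\RR_+}_r = \dim \Ima \Sigma^{\RR}_r$, and since $U\otimes V\otimes W$ is not $r$-defective and $r < r_g$, this equals the expected dimension $r(n_U+n_V+n_W-2)$.

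Next I would parametrize the cell. The cell $D_r(I,J,K)$ is the image of the restriction of $\Sigma^{\RR_+}_r$ to the semialgebraic subset $\prod_{i=1}^r \bigl(U_+(I_i)\times V_+(J_i)\times W_+(K_i)\bigr)$; since the map is polynomial and the domain is semialgebraic, the image is semialgebraic and its dimension is at most that of the domain, i.e., at most $\sum_{i=1}^r \bigl((n_U - |I_i|) + (n_V - |J_i|) + (n_W - |K_i|)\bigr)$. But this is a crude bound: even the trivial cell's parametrizing space has dimension $r(n_U+n_V+n_W)$ rather than $r(n_U+n_V+n_W-2)$, the discrepancy of $2r$ coming from the scaling redundancy in each rank-one summand $u_i\otimes v_i\otimes w_i$. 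To get a sharp bound I would instead argue as follows: the generic fiber of $\Sigma^{\RR_+}_r$ restricted to the trivial cell has dimension exactly $2r$ (this is where not being $r$-defective enters, via the semialgebraic Terracini Lemma~\ref{lem:semiterracinilem}, which identifies the tangent space and hence shows the fiber is precisely the scaling torus orbit), whereas over a point of a nontrivial cell the scaling redundancy persists — each nonzero $u_i$, $v_i$, $w_i$ can still be rescaled — so the generic fiber of the restricted map still has dimension at least $2r$. Combining,
\[
\dim D_r(I,J,K) \le \Bigl(\sum_{i=1}^r (n_U+n_V+n_W) - \sum_{i=1}^r(|I_i|+|J_i|+|K_i|)\Bigr) - 2r = \dim D_r - \sum_{i=1}^r(|I_i|+|J_i|+|K_i|),
\]
which is strictly less than $\dim D_r$ as soon as some index set is nonempty.

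The main obstacle I anticipate is making the fiber-dimension comparison rigorous: I must be sure that the generic fiber of $\Sigma^{\RR_+}_r$ over a point of the trivial cell really has dimension exactly $2r$ (no larger), which is exactly the statement that $U\otimes V\otimes W$ is not $r$-defective together with $r < r_g$ — the non-defectivity gives that $\Sigma^{\RR}_r$ has surjective differential of the expected rank at a general point, hence the fiber has the expected dimension $r(n_U+n_V+n_W) - r(n_U+n_V+n_W-2) = 2r$, and passing to $\RR_+$ is harmless by Lemma~\ref{thm:sard} since $(U_+\times V_+\times W_+)^r$ contains a Euclidean-open subset. The secondary technical point is that on a nontrivial cell the map may fail to be a submersion and the generic fiber dimension could in principle jump \emph{up}, but that only helps the inequality; what I need is the lower bound $\ge 2r$ on that fiber dimension, which holds because the scaling action of $(\RR^\times)^{2r}$ (or $(\RR_{>0})^{2r}$, on the nonzero locus) still acts freely on a general point of the parametrizing space of the cell and preserves the cell. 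Once these points are in place, the dimension count above closes the argument.
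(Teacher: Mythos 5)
Your proposal is correct and follows essentially the same route as the paper: the paper exhibits $\partial D_r$ as a finite union of images, under $\Sigma^{\RR_+}_r$, of domains with one coordinate of one factor zeroed out, and then declares the resulting dimension drop ``evident''; your explicit dimension count via the cells $D_r(I,J,K)$ and the $2r$-dimensional scaling fiber is precisely what the paper leaves implicit. The one point worth tightening is the degenerate case where some $I_i = \{1,\dots,n_U\}$ (forcing $u_i \equiv 0$ throughout the cell), since there the $(\RR_{>0})^{2r}$ scaling no longer acts freely in the $i$th slot and your $\geq 2r$ fiber bound does not directly apply; but in that case the cell is contained in $D_{r-1}$, and $\dim D_{r-1} < \dim D_r$ already follows from $r < r_g$ (as in the proof of Lemma~\ref{lem:nonnegreal}), so the conclusion still holds.
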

\begin{proof}
We first describe $\partial D_r$ explicitly. Let $\alpha \in \{1, \dots, n_U\}$ and $i \in \{1,\dots, r\}$. Let $\widetilde{U}_+(\alpha) = \{ u \in U_+ \colon \alpha \notin \operatorname{supp}(u) \}$. Define
\[
\partial D^{(i, \alpha)}_{r, U} \coloneqq \Sigma^{\RR_+}_r \bigl( (U_+ \times V_+ \times W_+)^{i-1} \times ( \widetilde{U}_+(\alpha)\times V_+ \times W_+) \times (U_+ \times V_+ \times W_+)^{r-i} \bigr).
\]
We write
\[
\partial D_{r, U} \coloneqq \bigcup_{i=1}^r \bigcup_{\alpha =1}^{n_U} \partial D^{(i, \alpha)}_{r, U}
\]
and likewise define $\partial D_{r, V}$ and $\partial D_{r, W}$.
The boundary is then the union of these three semialgebraic subsets,
\[
\partial D_r =  \partial D_{r, U}  \cup \partial D_{r, V} \cup \partial D_{r, W}.
\]
From this description of $\partial D_r$, the required result is evident.
\end{proof}

We caution our reader that our notion of boundary of $D_r$ differs from both its topological boundary and its algebraic boundary  as defined in \cite{AllRhoSturmZw15:laa}.


Let $A \in U_+ \otimes V_+ \otimes W_+$ where $\pi_r(A)$ has a nonnegative rank-$r$ decomposition $\pi_r(A) = \sum_{i = 1}^r u_i \otimes v_i \otimes w_i$. If there is some $i \in \{ 1, \dots, r \}$ such that strict inequality holds in \eqref{ineq:tan}, i.e., there is some $x_i \in U_+$ with
\begin{align}
\left\langle A, x_i \otimes v_i \otimes w_i \right\rangle &< \left\langle \pi_r(A), x_i \otimes v_i \otimes w_i \right\rangle, \label{eq:tan1}\\
\intertext{then $\widetilde{\pi}_r(A) \neq \pi_r(A)$ and $\pi_r(A) \in \partial D_r$ by Lemma~\ref{le: perp}. Similarly, if}
  \left\langle A, u_i \otimes y_i \otimes w_i \right\rangle & < \left\langle \pi_r(A), u_i \otimes y_i \otimes w_i \right\rangle \label{eq:tan2} \\
\text{or}\quad  \left\langle A, u_i \otimes v_i \otimes z_i \right\rangle & <\left\langle \pi_r(A), u_i \otimes v_i \otimes z_i \right\rangle \label{eq:tan3}
\end{align}
for some $y_i \in V_+$ or $z_i \in W_+$, then $\widetilde{\pi}_r(A) \neq \pi_r(A)$ and $\pi_r(A) \in \partial D_{r}$. We define the following sets:
\begin{align}
\mathcal{L} &= \{\pi_r(A) \in \partial D_r : \pi_r(A) \; \text{satisfies \eqref{eq:tan1}, \eqref{eq:tan2}, or \eqref{eq:tan3}}\}, \label{eq:L}\\
\mathcal{N} &= \{A \in U_+ \otimes V_+ \otimes W_+ \setminus (S_r \cup H_r) : \pi_r(A)\in \mathcal{L} \}. \label{eq:N}
\end{align}
We will next show that every positive tensor (i.e., a tensor whose coordinates are positive) in $\mathcal{N}$ is an interior point.

\begin{proposition}\label{prop:boundary}
If $A \in \mathcal{N}$ is positive, then $A$ has an open neighborhood $\mathcal{V}$ such that $\mathcal{V} \subseteq \mathcal{N}$.
\end{proposition}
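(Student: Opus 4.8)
The plan is to perturb $A$ within the positive tensors, propagate the strict inequality witnessing $\pi_r(A)\in\mathcal{L}$, and then deal with the one subtle point: transporting the corresponding nonnegative rank-$r$ decomposition. To begin, since $A$ is positive a small Euclidean ball about $A$ consists of positive tensors, and $S_r\cup H_r$ is closed (the set $H_r$ is a closed lower-dimensional semialgebraic set by Proposition~\ref{prop:smooth}, and $S_r$ is the complement of the open set of tensors at which $\dist(\cdot,D_r)$ is smooth), so there is an open neighborhood $\mathcal{V}_1\ni A$ of positive tensors disjoint from $S_r\cup H_r$. On $\mathcal{V}_1$ the map $\pi_r$ is defined and smooth, hence continuous, so $\pi_r(A')\to\pi_r(A)$ as $A'\to A$ in $\mathcal{V}_1$.

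Next I locate the active constraint. Fix a nonnegative rank-$r$ decomposition $\pi_r(A)=\sum_{i=1}^r u_i\otimes v_i\otimes w_i$ witnessing $\pi_r(A)\in\mathcal{L}$, say via \eqref{eq:tan1}. By \eqref{ineq:tan}, $\langle A-\pi_r(A),e_j\otimes v_{i_0}\otimes w_{i_0}\rangle\le 0$ for every coordinate $j$, so since $U_+$ is generated by the $e_j$ I may take the witness to be a coordinate vector $e_{j_0}$, and then \eqref{eq:nontan} forces $j_0\notin\operatorname{supp}(u_{i_0})$. Thus $\theta_0:=(u_1,\dots,w_r)$ minimizes $\theta\mapsto\norm{A-\Sigma^{\RR}_r(\theta)}^2$ over $\{\theta\ge 0\}$, and the nonnegativity constraint on the coordinate $u_{i_0,j_0}$ is active there with a strictly positive multiplier $-2\langle A-\pi_r(A),e_{j_0}\otimes v_{i_0}\otimes w_{i_0}\rangle>0$; equivalently, moving $\pi_r(A)$ in the direction $-e_{j_0}\otimes v_{i_0}\otimes w_{i_0}$ leaves $D_r$ but stays in $\Ima\Sigma^{\RR}_r\subseteq\sigma_r$ while strictly decreasing the distance to $A$, so that $\dist(A,\sigma_r)<\dist(A,D_r)$. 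It is this strict complementarity (an open-type condition) that I will carry over to nearby $A'$.

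Now the propagation. For $A'$ near $A$ I want a nonnegative rank-$r$ decomposition $\pi_r(A')=\sum_i u_i'\otimes v_i'\otimes w_i'$ with $(u_i',v_i',w_i')\to(u_i,v_i,w_i)$. Since summands of a nonnegative tensor cannot cancel, every nonnegative rank-one summand of $T\in D_r$ has norm at most $\sqrt{n_Un_Vn_W}\,\norm{T}$; hence, after the block normalization $\norm{u_i}=\norm{v_i}=\norm{w_i}$ and discarding zero blocks, there is a compact $K\subseteq(U_+\times V_+\times W_+)^r$ with $D_r\cap B(\pi_r(A),\varepsilon_0)\subseteq\Sigma^{\RR_+}_r(K)$ and $\theta_0\in K$ (renormalization only rescales $v_{i_0},w_{i_0}$ by positive factors, preserving $j_0\notin\operatorname{supp}(u_{i_0})$ and the sign above). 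Using $\pi_r(A')\to\pi_r(A)$, properness of $\Sigma^{\RR_+}_r|_K$, and a continuous local section of $\Sigma^{\RR_+}_r$ near $\theta_0$ composed with $\pi_r$, one obtains decompositions $\theta(A')$ of $\pi_r(A')$ with $\theta(A')\to\theta_0$. Finally, the polynomial map $(A',(u_1',\dots,w_r'))\mapsto\bigl\langle A'-\sum_i u_i'\otimes v_i'\otimes w_i',\ e_{j_0}\otimes v_{i_0}'\otimes w_{i_0}'\bigr\rangle$ is continuous and, evaluated at $(A',\theta(A'))$, tends to $\langle A-\pi_r(A),e_{j_0}\otimes v_{i_0}\otimes w_{i_0}\rangle<0$, so it remains negative on a neighborhood $\mathcal{V}\subseteq\mathcal{V}_1$ of $A$. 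For $A'\in\mathcal{V}$ this is exactly \eqref{eq:tan1} for the pair $(A',\theta(A'))$, whence $\pi_r(A')\in\mathcal{L}$ and $A'\in\mathcal{N}$; so $\mathcal{V}\subseteq\mathcal{N}$.

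The main obstacle is the requirement, in the propagation step, that the transported decomposition converge to the \emph{chosen} $\theta_0$ and not merely to some other point of the fiber $(\Sigma^{\RR_+}_r)^{-1}(\pi_r(A))$, which may be positive-dimensional; properness and compactness alone only pin cluster points to that fiber. So the real work is to build a genuine continuous local section of $\Sigma^{\RR_+}_r$ near $\theta_0$, and it is here that one must use more than continuity of $\pi_r$ — namely that $A\notin H_r$ makes $\pi_r$ smooth near $A$, together with semialgebraic local triviality of $\Sigma^{\RR_+}_r$ over a stratum through $\pi_r(A)$. Positivity of $A$ enters in keeping the perturbed tensors positive — so that the only active constraints responsible for $\dist(\cdot,\sigma_r)<\dist(\cdot,D_r)$ are the factor-nonnegativity constraints detected by \eqref{eq:tan1}--\eqref{eq:tan3} — and in supplying the genericity behind the section argument.
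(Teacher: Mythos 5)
Your argument follows the same outline as the paper's: pass to a positive neighborhood on which $\pi_r$ is smooth, transport a decomposition of $\pi_r(A)$ to one of $\pi_r(p)$ for $p$ near $A$, and propagate the strict inequality in \eqref{eq:tan1} by continuity of the pairing. Where you diverge is in the transport step: you frame it as constructing a \emph{continuous local section} of $\Sigma^{\RR_+}_r$ near $\theta_0=(u_1,\dots,w_r)$, correctly flag this as the crux, and then defer it to ``semialgebraic local triviality,'' which would itself require a substantive argument (the fibers of $\Sigma^{\RR_+}_r$ are typically positive-dimensional, and local triviality of a polynomial map over a stratum is not automatic). The paper asks for strictly less: it fixes an $\varepsilon$-ball product $\mathcal{U}$ around $\theta_0$ inside $(\Sigma^{\RR_+}_r)^{-1}\bigl(B(\pi_r(A),\delta)\cap D_r\bigr)$ and, after shrinking $\eta$, arranges $\pi_r(B(A,\eta))\subseteq\Sigma^{\RR_+}_r(\mathcal{U})$, so that each $\pi_r(p)$ admits \emph{some} decomposition within $\varepsilon$ of $\theta_0$; no continuity of the choice is required, since $\langle p-\pi_r(p),\,x_i\otimes v_i(p)\otimes w_i(p)\rangle$ is then within $O(\eta+\delta+\varepsilon)$ of its value at $(A,\theta_0)$ and hence stays negative. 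Your Lagrangian digression (strict complementarity, $\dist(A,\sigma_r)<\dist(A,D_r)$) is correct but unused in the propagation and can be dropped. In short: same method, but you over-engineer the key step into a section-existence problem that the paper circumvents by working with a mere $\varepsilon$-close choice of decomposition rather than a continuous one.
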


\begin{proof}
We first describe the structure of an open neighborhood $B(A, \eta)$ of a positive $A\in U_+ \otimes V_+ \otimes W_+$ and its image $\pi_r(B(A, \eta))$.
By \cite[Proposition~15]{QComonLim14:arxiv}, $\pi_r(A)$ always has nonnegative rank-$r$. 
Since $\pi_r$ is smooth, for any $\delta > 0$, there is some $\eta > 0$ such that $\pi_r(B(A, \eta)) \subseteq B(\pi_r(A), \delta) \cap D_r$.  Observe that $\bigl(\Sigma^{\RR_+}_r\bigr)^{-1}(B(\pi_r(A), \delta) \cap D_r)$ is a union of at most a countable number of products of open balls, say,
\[
\bigcup\nolimits_{j=1}^s \bigl( B(u^{(j)}_1, \delta^{(j)}_1) \cap U_+ \bigr) \times \cdots \times \bigl( B(w^{(j)}_r, \delta^{(j)}_r) \cap W_+ \bigr) \subseteq (U_+ \times V_+ \times W_+)^{r},
\]
where $s \in \mathbb{N} \cup \{ \infty \}$,  $u^{(j)}_i \in U_+$,  $v^{(j)}_i \in V_+$, $w^{(j)}_i \in W_+$, and $\delta^{(j)}_i > 0$ for $i =1,\dots,r$, and $j =1,\dots, s$. By dimension count, there exists some $j$ such that the image of
\[
\mathcal{U} \coloneqq (B(u^{(j)}_1, \delta^{(j)}_1) \cap U_+) \times \cdots \times (B(w^{(j)}_r, \delta^{(j)}_r) \cap W_+)
\]
under $\Sigma^{\RR_+}_r$ contains an open subset of $B(\pi_r(A), \delta) \cap D_r$. For notational convenience, we drop the superscript on  $u^{(j)}_i,v^{(j)}_i, w^{(j)}_i$ and write $u_i, v_i,w_i$ below.  By decreasing $\delta$ we may choose $\delta^{(j)}_1 = \cdots = \delta^{(j)}_r = \varepsilon$ for some $\varepsilon > 0$ small enough. Furthermore, we may assume that $\pi_r(A) = \sum_{i=1}^r u_i \otimes v_i \otimes w_i$ is a nonnegative rank-$r$ decomposition. So for any $p \in B(A, \eta)$, $\pi_r(p)$ has a nonnegative rank-$r$ decomposition $\pi_r(p) = \sum_{i=1}^r u_i(p) \otimes v_i(p) \otimes w_i(p)$ where 
\[
\norm{u_i - u_i(p)} \le \varepsilon, \quad \norm{v_i - v_i(p)}  \le \varepsilon, \quad \norm{w_i - w_i(p)}  \le \varepsilon,
\]
for $i=1,\dots,r$. Thus 
\begin{equation}\label{eq:suppine}
\operatorname{supp}(u_i) \subseteq \operatorname{supp}(u_i(p)), \quad \operatorname{supp}(v_i) \subseteq \operatorname{supp}(v_i(p)), \quad \operatorname{supp}(w_i) \subseteq \operatorname{supp}(w_i(p)),
\end{equation}
for $i=1,\dots,r$, and all $u_i(p)$, $v_i(p)$ and $w_i(p)$ depend continuously on $p$. The function defined by
\[
g(p) \coloneqq \langle p - \pi_r(p), x_i \otimes v_i(p) \otimes w_i(p) \rangle
\]
is therefore continuous on $B(A, \eta)$ for any fixed $x_i \in U_+$. If there is some $x_i \in U_+$ such that $\langle A - \pi_r(A), x_i \otimes v_i \otimes w_i \rangle < 0$, then by the continuity of $g$, there is an open neighborhood $\mathcal{V} \subseteq B(A, \eta)$ such $g(p) < 0$ for all $p \in \mathcal{V}$. Therefore $\mathcal{V} \subseteq \mathcal{N}$. 
\end{proof}

The following theorem is the main result of this section. It characterizes the relation between the image of $\pi_r$ and the cells of $D_r$. Its implication on nonnegative tensor decomposition and approximation will be given in Corollary~\ref{cor:uniqueness}.
\begin{theorem}\label{thm:mapopencell}
Let $\pi_r(A) \in D_r(I, J, K) $ for some cell $D_r(I, J, K)  \neq \{0\}$. Let $\mathcal{V}$ be an open neighborhood of $A$. Then $\pi_r(\mathcal{V})$ contains an open subset of $D_r(I, J, K) $.
\end{theorem}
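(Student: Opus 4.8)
The plan is to lift a neighbourhood of $q := \pi_r(A)$ inside the cell $D_r(I,J,K)$ back through $\pi_r$: I will construct a family of nonnegative tensors close to $A$ whose best nonnegative rank-$r$ approximations sweep out an open piece of the cell.

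First I would set up the parametrisation. Fix a nonnegative rank-$r$ decomposition $q = \sum_{i=1}^r u_i \otimes v_i \otimes w_i$ adapted to the cell, i.e.\ $u_i \in U_+(I_i)$, $v_i \in V_+(J_i)$, $w_i \in W_+(K_i)$ (the case $q = 0$ being trivial), and write $\mathcal P := U_+(I_1) \times V_+(J_1) \times W_+(K_1) \times \cdots \times W_+(K_r)$, an open subset of a linear space, so that $\Sigma^{\RR_+}_r$ restricts to a surjection $\mathcal P \to D_r(I,J,K)$ sending $\theta_0 := (u_1,\dots,w_r)$ to $q$. For $\theta \in \mathcal P$ the subspace $T(\theta) := \mathsf{T}_{\Sigma^{\RR_+}_r(\theta)}(\theta)$ of Lemma~\ref{le: perp} is precisely the image of $d(\Sigma^{\RR_+}_r|_{\mathcal P})_\theta$, and by \eqref{eq:tanperp} applied to $p = A$ one has $A - q \perp T(\theta_0)$. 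I would assume (see below for the reduction) that $\theta_0$ lies in the open dense locus of $\mathcal P$ on which $d(\Sigma^{\RR_+}_r|_{\mathcal P})$ has its maximal rank $d := \dim D_r(I,J,K)$, so that near $\theta_0$ the space $T(\theta)$ is the tangent space of the cell and depends smoothly on $\theta$. I would then set, for $\theta$ near $\theta_0$,
\[
A(\theta) := \Sigma^{\RR_+}_r(\theta) + P_{T(\theta)^{\perp}}(A - q),
\]
the orthogonal projection of the fixed vector $A - q$ onto $T(\theta)^{\perp}$. Since $A - q \perp T(\theta_0)$ we get $A(\theta_0) = A$, and $\theta \mapsto A(\theta)$ is smooth, so $A(\theta)$ is nonnegative and lies in $\mathcal V$ for $\theta$ in a small neighbourhood $\mathcal O$ of $\theta_0$ (taking $A$ in the interior of the nonnegative orthant, as in Proposition~\ref{prop:boundary}).

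The key step is to prove that, after shrinking $\mathcal O$, $\pi_r(A(\theta)) = \Sigma^{\RR_+}_r(\theta)$ for all $\theta \in \mathcal O$. The point $\Sigma^{\RR_+}_r(\theta)$, with the decomposition $\theta$, satisfies all the first-order conditions of Lemma~\ref{le: perp} relative to $A(\theta)$: the equalities \eqref{eq:nontan} hold because $A(\theta) - \Sigma^{\RR_+}_r(\theta) = P_{T(\theta)^{\perp}}(A - q) \perp T(\theta)$, and the inequalities \eqref{ineq:tan} persist for $\theta$ near $\theta_0$ by continuity together with compactness of the unit spheres of $U_+, V_+, W_+$. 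Since $A \notin S_r \cup H_r$, the best nonnegative rank-$r$ approximation is unique and depends continuously on the tensor near $A$, and the first-order optimality equations for $\dist(\cdot, D_r)$ have a unique solution near $q$; as $\Sigma^{\RR_+}_r(\theta)$ is such a solution for $A(\theta)$ and lies near $q$, it must equal $\pi_r(A(\theta))$. It then follows that $\pi_r(\mathcal V) \supseteq \pi_r(\{A(\theta) : \theta \in \mathcal O\}) = \Sigma^{\RR_+}_r(\mathcal O)$, and since $\Sigma^{\RR_+}_r|_{\mathcal P}$ has locally constant rank $d$ near $\theta_0$, its image of $\mathcal O$ contains a relatively open subset of $D_r(I,J,K)$, completing the argument.

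The hard part will be the ``persistence of optimality'' $\pi_r(A(\theta)) = \Sigma^{\RR_+}_r(\theta)$ — upgrading the necessary conditions of Lemma~\ref{le: perp} to genuine global optimality. This leans on $A \notin H_r$ (smoothness of $\pi_r$ at $A$, hence nondegeneracy of the normal equations and so a unique nearby critical point), and it needs care in the borderline case where \eqref{ineq:tan} holds with equality for some $x_i \in U_+ \setminus \widetilde U_i$, since then a careless perturbation might violate optimality. I would handle this — together with the assumption that $\theta_0$ has maximal rank — by a preliminary reduction: because it suffices to exhibit a single relatively open subset of $D_r(I,J,K)$ in $\pi_r(\mathcal V)$, one may first replace $A$ by a nearby tensor in $\mathcal V$ for which $\pi_r(A)$ is a smooth point of the cell and all the relevant inequalities in \eqref{ineq:tan} are strict, and then run the construction above at that tensor with a smaller neighbourhood. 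Making that reduction precise, and verifying that the unique nearby critical point is the global minimiser, is where I expect most of the effort to go.
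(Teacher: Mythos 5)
Your overall strategy --- translating $A$ rigidly along the cell and hoping the best approximation follows --- is in the same spirit as the paper's argument, which sets $A(t) = A - \pi_r(A) + \gamma(t)$ for a curve $\gamma$ in the cell. However, the paper never needs to verify that a translated candidate is the global minimizer: it argues by contradiction, assuming $\dim\pi_r(\mathcal V) < \dim D_r(I,J,K)$, invoking the Nash tubular neighborhood theorem (Theorem~\ref{thm:tubular}) to produce a curve $\gamma(t)$ that stays at positive distance from $\pi_r(\mathcal V)$, and then deriving from Lemma~\ref{le: perp} that $\norm{A(t) - \pi_r(A(t))}$ is locally constant in $t$ while $\norm{A(t) - \gamma(t)} = \norm{A - \pi_r(A)}$; uniqueness of the best approximation forces $\pi_r(A(t)) = \gamma(t)$, contradicting the choice of $\gamma$. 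The heavy lifting is thus delegated to uniqueness of $\pi_r$ off $S_r \cup H_r$, not to global optimality of an explicitly constructed candidate.

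The step you flag as ``the hard part'' --- showing $\pi_r(A(\theta)) = \Sigma^{\RR_+}_r(\theta)$ --- is a genuine gap, and I don't think the outlined reduction closes it. Two concrete problems:

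First, verifying that $\Sigma^{\RR_+}_r(\theta)$ satisfies \eqref{ineq:tan} for \emph{all} $x_i \in U_+$ (and likewise for $V_+$, $W_+$) is not a matter of compactness/continuity alone. At $\theta_0$, there may be directions $x_i \in U_+ \setminus \widetilde{U}_i$ for which \eqref{ineq:tan} holds with equality; for nearby $\theta$ the sign can flip, in which case $\Sigma^{\RR_+}_r(\theta)$ fails even the \emph{necessary} conditions and is not a critical point, let alone the minimizer. You propose to perturb $A$ so that all these inequalities become strict, but that reduction is itself problematic: by \eqref{eq:suppine} the supports of the factors of $\pi_r(A')$ can only \emph{grow} under perturbation, so a nearby $A'$ may have $\pi_r(A') \in D_r(I', J', K')$ with $I'_i \subsetneq I_i$, i.e.\ a strictly different (larger) cell, and an open subset of that cell is not what the theorem asks for. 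Nothing in the setup guarantees you can perturb within the same cell while simultaneously making the tight inequalities strict.

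Second, even granting the reduction, the passage from ``satisfies the first-order conditions near $q$'' to ``equals $\pi_r(A(\theta))$'' is not justified by $A \notin H_r$. Smoothness of $\pi_r$ near $A$ gives continuous dependence of the global minimizer, so $\pi_r(A(\theta))$ is close to $q$; but $D_r$ is a stratified semialgebraic set, and the distance function to it can have several critical points in any neighborhood of $q$ lying in different strata or cells. You would need an argument (e.g.\ a strict second-order condition, or an implicit-function-theorem statement for the KKT system on the correct stratum) ruling out other nearby solutions, and neither is supplied. The paper's contradiction argument avoids this entirely: it never asserts that $\gamma(t)$ is optimal for $A(t)$; it only compares $\norm{A(t)-\gamma(t)}$ with $\norm{A(t)-\pi_r(A(t))}$ and lets uniqueness do the rest. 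I would encourage you to adopt that indirect route: assume the image has strictly smaller dimension than the cell, retract via Theorem~\ref{thm:tubular}, and derive the contradiction from the constant-distance computation enabled by Lemma~\ref{le: perp}.
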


\begin{proof}
We consider two cases: If $\pi_r(\mathcal{V}) $ is zero-dimensional, then we are led to a contradiction and so this case cannot occur. If $\pi_r(\mathcal{V}) $ is positive-dimensional, then we show that it must have full dimension in  $D_r(I, J, K) $  and therefore the required result follows.
\begin{case}
$\pi_r(\mathcal{V}) = \pi_r(A)$ is a point.
\end{case}

Let $\gamma(t)$ be a curve in $ \mathcal{V}$ with $\gamma(0) = A$. Then $\pi_r(\gamma(t)) = \pi_r(A)$ for any $t$. By \eqref{eq:tanperp} we have
\[
\langle \mathsf{T}_{\pi_r(A)}(u_1, \dots, w_r), \gamma(t) - \pi_r(A) \rangle = 0,\qquad
\langle \mathsf{T}_{\pi_r(A)}(u_1, \dots, w_r), A - \pi_r(A) \rangle = 0,
\]
implying that
\[
\langle \mathsf{T}_{\pi_r(A)}(u_1, \dots, w_r), \gamma(t) - A \rangle = 0.
\]
Since the curve $\gamma(t)$ is  arbitrary, we are led to the conclusion that
\[
\langle \mathsf{T}_{\pi_r(A)}(u_1, \dots, w_r), U \otimes V \otimes W \rangle = 0,
\]
contradicting the definition of $\mathsf{T}_{\pi_r(A)}(u_1, \dots, w_r)$.

\begin{case}
$\pi_r(\mathcal{V})$ is of positive dimension.
\end{case}

We will show that  $\dim \pi_r(\mathcal{V}) = \dim D_r(I, J, K) $. By \eqref{eq:suppine}, we may assume that $\pi_r(A)$ is a smooth point of $\pi_r(\mathcal{V})$  without loss of generality. By giving $\pi_r(\mathcal{V})$ a finer stratification, we may furthermore assume that $\pi_r(\mathcal{V})$ is a Nash manifold. Suppose that $\dim \pi_r(\mathcal{V}) < \dim D_r(I, J, K) $. Then by Theorem~\ref{thm:tubular} there is an open semialgebraic neighborhood $\mathcal{R}$ of $\pi_r(\mathcal{V})$ in $D_r(I, J, K) $ and a Nash retraction $f \colon \mathcal{R} \to \pi_r(\mathcal{V})$ such that
\[
\operatorname{dist}(p, \pi_r(\mathcal{V})) = \norm{p - f(p)}
\]
for any $p \in \mathcal{R}$. So there is a smooth curve $\gamma(t) \subseteq \mathcal{R}$ such that $\gamma(0) = \pi_r(A)$ and $f(\gamma(t)) = \pi_r(A)$. Let $A(t)  \coloneqq A - \pi_r(A) + \gamma(t)$ and $X(t) \coloneqq \pi_r(A(t)) \subseteq \pi_r(\mathcal{V})$. Note that
\[
\gamma(t), X(t) \subseteq D_r(I, J, K),\qquad A'(0), X'(0) \in \mathsf{T}_{\pi_r(A)}(u_1, \dots, w_r).
\]
By Lemma~\ref{le: perp},
\[
\lim_{t \to 0^+}\frac{d}{dt} \langle A(t) - X(t), A(t) - X(t) \rangle = 2 \langle A'(0) - X'(0), A - X(0) \rangle = 0.
\]
In fact, for any $s > 0$ small enough, we have
\[
\frac{d}{dt} \langle A(t) - X(t), A(t) - X(t) \rangle \Bigr|_{t=s} = 2 \langle A'(s) - X'(s), A(s) - X(s) \rangle = 0,
\]
implying that $\norm{A(t) - X(t)}$ is constant around $t=0$. On the other hand,
\[
\norm{A(t) - \gamma(t)} = \norm{A - \pi_r(A)}.
\]
So by the uniqueness of $\pi_r(A(t))$, $X(t) = \gamma(t)$, contradicting $\gamma(t) \subseteq \mathcal{R} \setminus \pi_r(\mathcal{V})$ for $t > 0$. Therefore we must have $\dim \pi_r(\mathcal{V}) = \dim D_r(I, J, K) $.
\end{proof}


%
\begin{corollary}\label{cor:uniqueness}
Let $r < r_g$, $U \otimes V \otimes W$ be $r$-identifiable, and $A \in U_+ \otimes V_+ \otimes W_+$ be general. If the unique best nonnegative rank-$r$ approximation $\pi_r(A)$ of $A$ is not in the boundary $\partial D_r$, then $\pi_r(A)$ has a unique nonnegative rank-$r$ decomposition.
\end{corollary}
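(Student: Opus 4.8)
The plan is to reduce the statement to two facts already in hand: Theorem~\ref{thm:riden1}, which (under $r$-identifiability) controls a general point of $D_r$ itself and shows it has a unique nonnegative rank-$r$ decomposition; and Theorem~\ref{thm:mapopencell}, which shows that $\pi_r$ sends a neighborhood of $A$ onto a full-dimensional piece of whichever cell $\pi_r(A)$ lies in. Combining these with a semialgebraic dimension count of the ``bad'' pullback will finish the argument.

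First I would extract the following from the hypotheses. Since $U\otimes V\otimes W$ is $r$-identifiable it is not $r$-defective, so Lemma~\ref{lem:boundary} gives $\dim\partial D_r<\dim D_r$; this also shows that the hypothesis $\pi_r(A)\notin\partial D_r$ holds for a general $A$, so the corollary is not vacuous. More importantly, $r$-identifiability together with $r<r_g$ and Theorem~\ref{thm:riden1} tells us that a general $B\in D_r$ has a unique nonnegative rank-$r$ decomposition; unwinding the definition of ``general'', there is a semialgebraic subset $Z\subseteq D_r$ with $\dim Z<\dim D_r$ such that every $B\in D_r\setminus Z$ has a unique nonnegative rank-$r$ decomposition.

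Next I would identify the bad locus in $U_+\otimes V_+\otimes W_+$. The key observation is that $\pi_r(A)\notin\partial D_r$ forces every nonnegative $r$-term decomposition of $\pi_r(A)$ to have full-support factors, i.e.\ $\pi_r(A)$ lies in the trivial cell $D_r(\varnothing,\dots,\varnothing)$, which is nonzero and satisfies $\dim D_r(\varnothing,\dots,\varnothing)=\dim D_r$. Set
\[
\mathcal{B}\coloneqq\{A\in U_+\otimes V_+\otimes W_+\setminus(S_r\cup H_r):\pi_r(A)\in Z\text{ and }\pi_r(A)\notin\partial D_r\},
\]
which is semialgebraic since $\pi_r$ has a semialgebraic graph and $Z,\partial D_r$ are semialgebraic. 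I claim $\mathcal{B}$ contains no nonempty open subset of $U\otimes V\otimes W$: if $\mathcal{V}\subseteq\mathcal{B}$ were such an open set, choose $A\in\mathcal{V}$; then $\pi_r(A)$ lies in the trivial cell, so by Theorem~\ref{thm:mapopencell} the image $\pi_r(\mathcal{V})$ contains an open subset of $D_r(\varnothing,\dots,\varnothing)$, hence a semialgebraic set of dimension $\dim D_r$. But $\pi_r(\mathcal{V})\subseteq\pi_r(\mathcal{B})\subseteq Z$ and $\dim Z<\dim D_r$, a contradiction. Since a semialgebraic subset of $U\otimes V\otimes W$ of full dimension must contain a nonempty open subset, it follows that $\dim\mathcal{B}<\dim(U_+\otimes V_+\otimes W_+)$.

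To conclude, note that $S_r\cup H_r\cup\mathcal{B}$ is a semialgebraic subset of $U_+\otimes V_+\otimes W_+$ of dimension strictly less than $\dim(U_+\otimes V_+\otimes W_+)$, so a general $A$ avoids it; for such $A$ the map $\pi_r$ is defined, and if $\pi_r(A)\notin\partial D_r$ then $A\notin\mathcal{B}$ forces $\pi_r(A)\notin Z$, so $\pi_r(A)$ has a unique nonnegative rank-$r$ decomposition. The step I expect to take the most care is the observation that $\pi_r(A)\notin\partial D_r$ puts $\pi_r(A)$ in the nonzero, full-dimensional trivial cell so that Theorem~\ref{thm:mapopencell} is applicable, together with the routine but essential verification that $Z$, $\partial D_r$, and $\mathcal{B}$ are all semialgebraic so that the dimension count is legitimate.
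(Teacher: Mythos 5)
Your proof is essentially the paper's argument made more explicit: you invoke Lemma~\ref{lem:boundary}, Theorem~\ref{thm:riden1}, and Theorem~\ref{thm:mapopencell} in exactly the roles the paper does, and you carry out in detail the dimension count that the paper summarizes in its final sentence. The one place where you diverge is that the paper additionally uses Theorem~\ref{thm:tubular} (Nash tubular neighborhood) to establish \eqref{eq:bigger}, namely that $\dim\Ima(\pi_r)=\dim D_r>\dim\partial D_r$; this step is what actually justifies the non-vacuity remark, i.e.\ that $\pi_r(A)\notin\partial D_r$ does occur on a full-dimensional set. Your aside that non-vacuity ``also follows'' from $\dim\partial D_r<\dim D_r$ alone is a slight overstatement --- that inequality does not by itself rule out $\Ima(\pi_r)\subseteq\partial D_r$ --- but since the corollary's conclusion is conditional on $\pi_r(A)\notin\partial D_r$, this does not affect the correctness of your proof of the stated claim. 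The core of your argument, showing that the semialgebraic bad locus $\mathcal{B}$ cannot contain an open set by pushing it through Theorem~\ref{thm:mapopencell} into the trivial cell and contradicting $\dim Z<\dim D_r$, is precisely the intended reasoning.
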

\begin{proof}
Since $r < r_g$ and $U \otimes V \otimes W$ is not $r$-defective, by Lemma~\ref{lem:boundary},
\[
\dim \partial D_r < \dim D_r < \dim U \otimes V \otimes W.
\]
For any smooth point $q \in D_r$, there is an open neighborhood $\mathcal{Q} \subseteq D_r$ of $q$ such that any point in $\mathcal{Q}$ is also smooth. By Theorem~\ref{thm:tubular}, there is an open semialgebraic neighborhood $\mathcal{R}$ of $\mathcal{Q}$ in $U_+ \otimes V_+ \otimes W_+$ and a Nash retraction $f \colon \mathcal{R} \to \mathcal{Q}$ such that
$\operatorname{dist}(p, \mathcal{Q}) = \norm{p - f(p)}$
for every $p \in \mathcal{R}$. By shrinking $\mathcal{R}$ if necessary, we may assume that
\[
\norm{p - f(p)} = \operatorname{dist}(p, \mathcal{Q}) = \operatorname{dist}(p, D_r)
\]
for every $p \in \mathcal{R}$, i.e., $\pi_r(p) = f(p)$. Thus every smooth point of $D_r$ is contained in $\Ima{(\pi_r)}$, i.e., $\Ima{(\pi_r)}$ is a semialgebraic subset of $D_r$ with
\begin{equation}\label{eq:bigger}
\dim \Ima{(\pi_r)} = \dim D_r > \dim \partial D_r.
\end{equation}
The required result then follows from Theorem~\ref{thm:riden1} and Theorem~\ref{thm:mapopencell} with the trivial cell $D_r(I, J, K) \supseteq D_r \setminus \partial D_r$.
\end{proof}

A measure theoretic consequence of Corollary~\ref{cor:uniqueness} is that there is a positive measured subset of nonnegative tensors, such that each nonnegative tensor in this subset has a unique best nonnegative rank-$r$ approximation, and furthermore this approximation has a unique nonnegative rank-$r$ decomposition.

In the case of real tensors, it is possible that best rank-$r$ approximations always lie on the boundary of the set of tensors of rank $\le r$ \cite[Section~8]{DesiL08:simax}. So one might perhaps wonder whether Corollary~\ref{cor:uniqueness} is vacuous. Fortunately this is not the case for nonnnegative tensors provided that $r < r_g$ and $U \otimes V \otimes W$ is not $r$-defective. In fact, the condition \eqref{eq:bigger} implies that $\pi_r(A)$ is not always in $\partial D_r$.

For the special cases $r = 2$ and $3$, we can say considerably more than Corollary~\ref{cor:uniqueness}. We will first make an observation regarding the case when $\pi_r(A) \in \mathcal{L}$ where $\mathcal{L}$ is as defined in \eqref{eq:L}.
\begin{lemma}\label{le:supp}
Let $\pi_r(A) \in \mathcal{L}$. Then
\begin{align*}
\operatorname{supp}(u_1) \cup \cdots \cup \operatorname{supp}(u_r) &= \{1, \dots, n_U\}, \\
\operatorname{supp}(v_1) \cup \cdots \cup \operatorname{supp}(v_r) &= \{1, \dots, n_V\}, \\
\operatorname{supp}(w_1) \cup \cdots \cup \operatorname{supp}(w_r) &= \{1, \dots, n_W\}.
\end{align*}
\end{lemma}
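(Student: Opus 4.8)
The plan is to argue by contradiction. By the symmetry of the roles of $U$, $V$, $W$ in the definitions \eqref{eq:tan1}--\eqref{eq:tan3} of $\mathcal{L}$ and in the three assertions to be proved, it suffices to treat one case: suppose $\operatorname{supp}(u_1)\cup\cdots\cup\operatorname{supp}(u_r)\neq\{1,\dots,n_U\}$, say $\alpha\notin\operatorname{supp}(u_i)$ for every $i$, where $\pi_r(A)=\sum_{i=1}^r u_i\otimes v_i\otimes w_i$ is the chosen decomposition and $A\in U_+\otimes V_+\otimes W_+$ (with $\pi_r(A)\in\mathcal{L}$, so $\pi_r(A)$ is the unique best nonnegative rank-$r$ approximation of $A$). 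The first step is to record the first-order optimality of $\pi_r(A)$ along the ``free'' coordinate $\alpha$. Since $\alpha$ lies in no $\operatorname{supp}(u_i)$, every coordinate $(\alpha,j,k)$ of $\pi_r(A)$ vanishes, so $\langle A-\pi_r(A),\,e_\alpha\otimes v_i\otimes w_i\rangle=\langle A,\,e_\alpha\otimes v_i\otimes w_i\rangle\ge 0$ by nonnegativity of $A$, $v_i$, $w_i$. On the other hand $u_i\mapsto u_i+s\,e_\alpha$ ($s\ge 0$) produces a nonnegative tensor of rank $\le r$, so minimality of $\pi_r(A)$ makes the right derivative of $\norm{A-\pi_r(A)-s\,e_\alpha\otimes v_i\otimes w_i}^2$ at $s=0$ nonnegative, forcing $\langle A-\pi_r(A),\,e_\alpha\otimes v_i\otimes w_i\rangle\le 0$. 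Hence $\langle A,\,e_\alpha\otimes v_i\otimes w_i\rangle=0$ for every $i$.

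Next I would extract the relevant rank-one term from the defining inequality of $\mathcal{L}$. Permuting the factors, we may assume \eqref{eq:tan1} holds: there are $i_0$ and $x\in U_+$ with $\langle A-\pi_r(A),\,x\otimes v_{i_0}\otimes w_{i_0}\rangle<0$. Splitting $x$ into its part supported on $\operatorname{supp}(u_{i_0})$ and its complement, \eqref{eq:nontan} of Lemma~\ref{le: perp} makes the first part contribute $0$; scaling, we may take $x=e_{k_0}$ with $k_0\notin\operatorname{supp}(u_{i_0})$ and $\langle A-\pi_r(A),\,e_{k_0}\otimes v_{i_0}\otimes w_{i_0}\rangle<0$. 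Then $\langle\pi_r(A),\,e_{k_0}\otimes v_{i_0}\otimes w_{i_0}\rangle>\langle A,\,e_{k_0}\otimes v_{i_0}\otimes w_{i_0}\rangle\ge 0$, so expanding $\pi_r(A)=\sum_j u_j\otimes v_j\otimes w_j$ there is a term $j_1$ with $(u_{j_1})_{k_0}>0$, $\langle v_{j_1},v_{i_0}\rangle>0$, $\langle w_{j_1},w_{i_0}\rangle>0$; note $j_1\neq i_0$ and $k_0\neq\alpha$. Applying \eqref{ineq:tan} with $e_{k_0}\in U_+$ to each term and \eqref{eq:nontan} to those $i$ with $k_0\in\operatorname{supp}(u_i)$, we also get $\langle A-\pi_r(A),\,e_{k_0}\otimes v_i\otimes w_i\rangle\le 0$ for all $i$, with equality whenever $k_0\in\operatorname{supp}(u_i)$ and strict inequality for $i=i_0$.

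The final step is to build an improving perturbation and contradict minimality. The strict inequality $\langle A-\pi_r(A),\,e_{k_0}\otimes v_{i_0}\otimes w_{i_0}\rangle<0$ says that moving the $i_0$-th rank-one term slightly in the $-e_{k_0}$ direction of $U$ strictly lowers the distance to $A$; but $(u_{i_0})_{k_0}=0$ obstructs this on its own. The idea is to realize the descent while staying in $D_r$ by coupling a first-order decrease of $(u_{j_1})_{k_0}$ (permitted because $(u_{j_1})_{k_0}>0$, $j_1\neq i_0$) with a compensating nonnegative increase supported on the free coordinate $\alpha$ and, if necessary, a simultaneous adjustment of the $i_0$-th term, exploiting $\langle v_{j_1},v_{i_0}\rangle,\langle w_{j_1},w_{i_0}\rangle>0$ to keep $u_{j_1}\otimes v_{j_1}\otimes w_{j_1}$ and $u_{i_0}\otimes v_{i_0}\otimes w_{i_0}$ close enough that the cross-terms are dominated; this gives a one-parameter curve in $D_r$ through $\pi_r(A)$ whose first-order distance change is strictly negative, contradicting that $\pi_r(A)$ is the best nonnegative rank-$r$ approximation. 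Running the same argument with $U$, $V$, $W$ permuted as in Step~2 yields all three support statements, and the orthogonality relation \eqref{eq:tanperp} is available throughout to control the directions along which $A-\pi_r(A)$ does or does not vanish.

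I expect Step~3 to be the main obstacle: every naive perturbation is first-order neutral or of the wrong sign --- adding $e_\alpha\otimes v_i\otimes w_i$ to a term is neutral by Step~1, \eqref{ineq:tan} says any nonnegativity-preserving move of a single $u_i$ cannot lower the distance to first order, and the nonnegativity constraint at the $k_0$-th coordinate of $u_{i_0}$ is precisely what blocks the descent demanded by $\mathcal{L}$. Circumventing this requires a genuinely two-term, carefully scaled perturbation, and the work lies in checking that the coupled move stays both nonnegative and of rank $\le r$ while its first-order effect on $\norm{A-\pi_r(A)}^2$ remains strictly negative, using the vanishing identities of Step~1 together with the strict inequality of Step~2.
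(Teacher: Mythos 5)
Your Step~1 actually contains the whole proof but you stopped one inference short. You correctly combine \eqref{ineq:tan} with the vanishing of the $\alpha$-slice of $\pi_r(A)$ to get $0 \ge \langle A-\pi_r(A),\,e_\alpha\otimes v_i\otimes w_i\rangle = \langle A,\,e_\alpha\otimes v_i\otimes w_i\rangle = \sum_{j,k} A_{\alpha jk}(v_i)_j(w_i)_k$. But the operative hypothesis here --- explicit in the paper's proof (``\,$A$ is positive\,'') and in the surrounding discussion (Proposition~\ref{prop:boundary}, and the remark right after \eqref{eq:N} that ``every positive tensor in $\mathcal{N}$ is an interior point'') --- is that $A$ is a \emph{positive} tensor. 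Since $v_i, w_i$ are nonzero nonnegative vectors, some product $(v_i)_j(w_i)_k$ is strictly positive, and every $A_{\alpha jk}>0$, so the sum is strictly positive, not merely $\ge 0$. That is already the contradiction; nothing else is needed. In particular, the hypothesis $\pi_r(A)\in\mathcal{L}$ plays no role in the argument at all --- the paper's proof never invokes \eqref{eq:tan1}--\eqref{eq:tan3}.

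Because you only used nonnegativity of $A$ rather than strict positivity, you obtained equality instead of a contradiction and were driven to the much harder Steps~2--3. Those steps are both unnecessary and unfinished: Step~3 is a plan (``a genuinely two-term, carefully scaled perturbation'') whose nonnegativity, rank, and first-order estimates are described but never verified, and you yourself flag it as the main obstacle. There is also a logical mismatch in Step~2: you assumed the support failure occurs in the $U$-factor, but $\mathcal{L}$ only guarantees that \emph{some} one of \eqref{eq:tan1}, \eqref{eq:tan2}, \eqref{eq:tan3} holds; ``permuting the factors'' does not let you align the factor in which $\mathcal{L}$ bites with the factor whose supports fail to cover. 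So as written the proposal has a genuine gap. The fix is simply to notice (and state) that $A$ is assumed positive, at which point your Step~1 becomes a complete proof essentially identical to the paper's.
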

\begin{proof}
Suppose $1 \notin \bigcup_{i=1}^r \operatorname{supp}(u_i)$. Then by definition
\[
\langle A - \pi_r(A), e_1 \otimes v_1 \otimes w_1 \rangle \le 0
\]
where $e_1 = (1, 0, \dots, 0)$. Since the coordinate $(\pi_r(A))_{1jk} = 0$ for any $j =1, \dots, n_V$, $k = 1, \dots, n_W$, and $A$ is positive, we have that $(A - \pi_r(A))_{1jk} > 0$. On the other hand, $(e_1 \otimes v_1 \otimes w_1)_{ijk} = 0$ for $i \neq 1$, and $(e_1 \otimes v_1 \otimes w_1)_{1jk} \ge 0$. Hence
\[
\langle A - \pi_r(A), e_1 \otimes v_1 \otimes w_1 \rangle > 0,
\]
a contradiction.
\end{proof}

A cell $D_r(I, J, K) $ is called \emph{admissible} if
\[
\bigcap\nolimits_{i=1}^r I_i = \bigcap\nolimits_{i=1}^r J_i = \bigcap\nolimits_{i=1}^r K_i = \varnothing.
\]
By Proposition~\ref{prop:boundary}, Theorem~\ref{thm:mapopencell}, and Lemma~\ref{le:supp}, if $A \in \mathcal{N}$, then there is an open neighborhood $\mathcal{V}$ of $A$ such that $\pi_r(\mathcal{V})$ contains an open subset of some admissible cell $D_r(I, J, K) $. For small values of $r$, we may check these admissible cells and possibly obtain uniqueness for nonnegative rank-$r$ decomposition of $\pi_r(A)$ for a general $A$. We will do this explicitly for $r =2$ and $3$.
\begin{theorem}\label{thm:uni23}
Let $r =2$ or $3$ and let $n_U , n_V, n_W \ge 3$. Then for a general $A \in U_+ \otimes V_+ \otimes W_+$, its unique best nonnegative rank-$r$ approximation $\pi_r(A)$ has a unique nonnegative rank-$r$ decomposition.
\end{theorem}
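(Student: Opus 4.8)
The plan is to separate two cases according to whether the best approximation $\pi_r(A)$ lies on the boundary $\partial D_r$, and in the boundary case to reduce the statement to a finite check over the admissible cells of $D_r$. First I would record the two facts that let the earlier machinery run for $r\in\{2,3\}$: since $n_U,n_V,n_W\ge 3$ the expected complex generic rank is at least $\lceil 27/7\rceil=4$, hence $r_g\ge 4>r$; and Kruskal's theorem \cite{Krus77:laa}, applied to $r$ generic rank-one terms (whose Kruskal ranks are $(r,r,r)$, so $3r\ge 2r+2$), shows that $U\otimes V\otimes W$ is $r$-identifiable. Consequently, for a general $A$ with $\pi_r(A)\notin\partial D_r$, Corollary~\ref{cor:uniqueness} already gives that $\pi_r(A)$ has a unique nonnegative rank-$r$ decomposition, so only the case $\pi_r(A)\in\partial D_r$ remains.

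Next I would take $A$ general with $\pi_r(A)\in\partial D_r$, which we may assume positive, and first check that $A\in\mathcal N$: if not, then the strict inequalities \eqref{eq:tan1}--\eqref{eq:tan3} all fail, so $A-\pi_r(A)$ is orthogonal to the entire tangent space of $\sigma_r$ at $\pi_r(A)$, forcing $\pi_r(A)=\widetilde{\pi}_r(A)$ to be a smooth point of $\sigma_r$ lying in the lower-dimensional locus $\partial D_r$, which does not occur for general $A$. Thus $A\in\mathcal N$, so by Proposition~\ref{prop:boundary} it is an interior point of $\mathcal N$, and by Theorem~\ref{thm:mapopencell} together with Lemma~\ref{le:supp} the point $A$ has a neighbourhood $\mathcal V$ with $\pi_r(\mathcal V)$ containing a nonempty open subset of some admissible cell $D_r(I,J,K)$. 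Any admissible cell reached this way contains rank-$r$ tensors in an open subset, so it suffices to prove: for $r=2,3$ and $n_U,n_V,n_W\ge 3$, a general point of every admissible cell $D_r(I,J,K)$ has a unique nonnegative rank-$r$ decomposition; by Lemma~\ref{lem:distnonunique} this also forces such a point to lie in no other cell, and a routine semialgebraic thinness argument handles the $A$ whose image happens to be a non-generic point of its cell.

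The case analysis itself runs over the finitely many admissible support patterns, up to the symmetry permuting the three modes and the $r$ summands. For $r=2$, admissibility forces $\operatorname{supp}(u_1)\cup\operatorname{supp}(u_2)=\{1,\dots,n_U\}$ and its analogues, so a general element of the cell has linearly independent factors in each mode; the three factor matrices then have Kruskal rank $2$, and since $2+2+2=6=2r+2$, Kruskal's theorem applies to this decomposition itself, making it the unique $r$-term complex, hence nonnegative, decomposition. For $r=3$ the same argument covers every admissible cell all of whose factor matrices have Kruskal rank $\ge 2$, as then $2+3+3\ge 8=2r+2$. The remaining admissible cells are those in which some factor matrix has Kruskal rank $1$, which forces the factor vectors in that mode to be supported on a single coordinate; admissibility permits this only when that mode has dimension exactly $3$. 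For such a cell I would single out the tensor slice dual to that coordinate: it is a nonnegative matrix of nonnegative rank $\le 2$ and hence has a unique nonnegative matrix factorization \cite{LaurbergCPHJ08:cin}, while the complementary slices are rank-one along a common direction, and combining these two facts — in the spirit of the proof of Lemma~\ref{lem:permma} — recovers all $r$ rank-one terms uniquely.

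The main obstacle is precisely this last family of cells for $r=3$, where Kruskal's bound misses by one and no general-purpose identifiability result is available. Here nonnegativity must be used in an essential way, through the uniqueness of the NMF of the rank-$\le 2$ slice, together with a careful accounting of how the rank-one summands of $\pi_r(A)$ can be distributed across its slices; enumerating exactly which such cells occur (only when one of $n_U,n_V,n_W$ equals $3$) and verifying uniqueness in each is where the real work lies.
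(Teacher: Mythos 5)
Your overall reduction follows the paper closely up to a point: split on whether $\pi_r(A)\in\partial D_r$, invoke Corollary~\ref{cor:uniqueness} for the non-boundary case, and for the boundary case funnel $\pi_r(A)$ into an admissible cell via Proposition~\ref{prop:boundary}, Theorem~\ref{thm:mapopencell}, and Lemma~\ref{le:supp}. Your Kruskal arguments for $r=2$ and for the $r=3$ cells whose factor matrices all have Kruskal rank $\ge2$ are also as in the paper. The gap is in your stated reduction itself: you claim ``it suffices to prove: for $r=2,3$... a general point of every admissible cell $D_r(I,J,K)$ has a unique nonnegative rank-$r$ decomposition,'' and then for the hard $r=3$ cells you propose to \emph{verify uniqueness} using uniqueness of NMF for the rank-$\le 2$ slice. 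This cannot work. The admissible cell (up to permutations of $\{1,2,3\}$) with $u_1=u_2=e_1$, $v_1=v_3=e_1$, $w_2=w_3=e_1$ and $w_1,v_2,u_3$ otherwise free has, at a general point, a genuinely \emph{non-unique} nonnegative rank-$3$ decomposition: the off-axis entries $p_{11k}$, $p_{1j1}$, $p_{i11}$ ($i,j,k\ge 2$) pin down $w_1,v_2,u_3$ away from the first coordinate, but $p_{111}=(w_1)_1+(v_2)_1+(u_3)_1$ is a single constraint on three nonnegative unknowns, leaving a two-parameter family. (Your NMF citation cannot rescue this: a generic nonnegative matrix of rank $\le 2$ does not have a unique nonnegative factorization -- the uniqueness in \cite{LaurbergCPHJ08:cin} invoked in Lemma~\ref{lem:permma} is for permutation matrices -- and in any case the $i=1$ slice only sees the sum $(w_1)_1+(u_3)_1$, not the individual pieces, so it cannot separate the summands.)

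The missing idea is the one the paper actually uses for the bad cell: do not try to prove uniqueness there, but instead rule it out of the range of $\pi_r$. If $\pi_r(A)$ lay in that cell, then $(\pi_r(A))_{1jk}=0$ for all $j,k\in\{2,3\}$, while a positive general $A$ has $A_{1jk}>0$ there; replacing $w_2=e_1$ by $e_1+z$ with $z=(0,\alpha,\beta)$, $\alpha,\beta>0$ small, keeps the tensor in $D_r$ and strictly decreases $\norm{A-\cdot}$, contradicting optimality of $\pi_r(A)$. This optimality-based exclusion, not an identifiability argument, is what closes the $r=3$ case; without it your plan stalls exactly where you say ``the real work lies.''
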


\begin{proof}
By Corollary~\ref{cor:uniqueness}, it remains to check the case $\pi_r(A) \in \partial D_r$ for a general $A$. Theorem~\ref{thm:mapopencell} and Lemma~\ref{le:supp} further restrict the remaining case to checking (i) whether $\pi_r(A)$ can be contained in an admissible cell, and (ii) whether $\pi_r(A)$ contained in an admissible cell (if any) has a unique decomposition.

When $r = 2$, for a general $p$ in any admissible cell $D_r(I, J, K) $, let $p =  u_1 \otimes v_1 \otimes w_1 + u_2 \otimes v_2 \otimes w_2$ be its nonnegative rank-$2$ decomposition. Then each set $\{ u_1, u_2 \}$, $\{v_1, v_2 \}$, and $\{w_1, w_2 \}$ consists of a pair of linearly independent vectors. By \cite{Krus77:laa}, $p$ has a unique real rank-$2$ decomposition and thus the nonnegative rank-$2$ decomposition is unique.

When $r = 3$, we may assume without loss of generality \cite[Theorem~5.2]{DesiL08:simax} that $n_U = n_V = n_W = 3$. The only situation where a general point $p$ of an admissible cell $D_r(I, J, K) $ does not have a unique nonnegative rank-$r$ decomposition is if
\begin{gather*}
I_1 = I_2 = \{ 2, 3 \}, \; I_3 \subseteq \{ 1 \}, \qquad J_1 = J_3 = \{ 2, 3 \}, \; J_2 \subseteq \{ 1 \}, \\
K_2 = K_3 = \{ 2, 3 \}, \;  K_1 \subseteq \{ 1 \},
\end{gather*}
up to a permutation of the index set $\{ 1, 2, 3 \}$. We claim that $\pi_r(A) $ cannot be contained in such a cell $D_r(I, J, K) $. Suppose not and  $\pi_r(A) \in D_r(I, J, K)$, i.e.,
\[
u_1 = u_2 = (1, 0, \dots, 0), \quad v_1 = v_3 = (1, 0, \dots, 0), \quad w_2 = w_3 = (1, 0, \dots, 0).
\]
Then $(\pi_r(A))_{1jk} = 0$ for $j = 2, 3$, $k  = 2, 3$. Let
\[
p = u_1 \otimes v_1 \otimes w_1 + u_2 \otimes v_2 \otimes (w_2 + z) + u_3 \otimes v_3 \otimes w_3
\]
for some $z = (0, \alpha, \beta)$ with $\alpha, \beta > 0$ small enough. Then $\norm{A - p} < \norm{A - \pi_r(A)}$ for a positive $A$, contradicting the definition of $\pi_r(A)$. Therefore $\pi_r(A) \notin D_r(I, J, K) $, a contradiction.
\end{proof}

It is possible that a general point in an admissible cell $D_r(I, J, K) $ may have non-unique nonnegative rank-$r$ decompositions. To show uniqueness, we need to exclude such a possibility, i.e.,  
check whether $\pi_r(A)$ is contained in such a cell for a typical $A$. For small values of $r$, we may test all cells case-by-case but evidently this becomes prohibitive for even moderately large values of $r$. Further results in this direction would require more precise descriptions of $I_1, \dots, K_r$ where $D_r(I, J, K)  \cap \Ima \pi_r \neq \varnothing$.

\section*{Acknowledgment}
The authors would like to thank G.~Blekherman, L.~Chiantini, I.~Domanov, P.~Eyssidieux, S.~Friedland, J.~M.~Landsberg, B.~Mourrain, Z.~Teitler and N.~Vannieuwenhoven for useful discussions. The authors are very grateful to the anonymous referees for their suggestions and comments that greatly improved and clarified our manuscript.


\bibliographystyle{siamplain}


\end{document}